\theoremstyle{plain}
\newtheorem{thm}{Theorem}[section]
\newtheorem{theorem}[thm]{Theorem}
\newtheorem{lemma}[thm]{Lemma}
\newtheorem{proposition}[thm]{Proposition}
\theoremstyle{definition}
\newtheorem{remark}[thm]{Remark}
\newtheorem{notation}[thm]{Notation}
\newtheorem{definition}[thm]{Definition}
\numberwithin{equation}{section}
\newcommand{\sA}{{\mathcal A}}
\newcommand{\sB}{{\mathcal B}}
\newcommand{\sC}{{\mathcal C}}
\newcommand{\sD}{{\mathcal D}}
\newcommand{\sE}{{\mathcal E}}
\newcommand{\sF}{{\mathcal F}}
\newcommand{\sH}{{\mathcal H}}
\newcommand{\sK}{{\mathcal K}}
\newcommand{\sM}{{\mathcal M}}
\newcommand{\sN}{{\mathcal N}}
\newcommand{\sO}{{\mathcal O}}
\newcommand{\sS}{{\mathcal S}}
\newcommand{\sU}{{\mathcal U}}
\newcommand{\sV}{{\mathcal V}}
\newcommand{\sW}{{\mathcal W}}
\newcommand{\sZ}{{\mathcal Z}}
\newcommand{\C}{{\mathbb C}}
\newcommand{\rk}{{\rm rk}}
\title[Webs of Lagrangian tori]{Webs of Lagrangian tori in projective symplectic manifolds}
\author{Jun-Muk Hwang, Richard M. Weiss}
\address{Jun-Muk Hwang, Korea Institute for Advanced Study, Hoegiro 87, Seoul, 130-722, Korea} \email{jmhwang@kias.re.kr}
\address{Richard M. Weiss, Department of Mathematics,
Tufts University, 503 Boston Avenue,  Medford, MA 02155, USA }
\email{rweiss@tufts.edu}
\thanks{Jun-Muk Hwang is supported
by National Researcher Program 2010-0020413 of NRF and MEST, and
Richard M. Weiss is partially supported by DFG Grant KR 1669/7-1}
\begin{document}

\maketitle

\begin{abstract} For a Lagrangian torus $A$ in a simply-connected projective symplectic
 manifold $M$, we prove that $M$ has a
hypersurface disjoint from a deformation of $A$. This implies that
a Lagrangian torus in a compact hyperk\"ahler manifold is a fiber
of an almost holomorphic Lagrangian fibration, giving an
affirmative answer to a question of Beauville's. Our proof employs
two different tools:   the theory of action-angle variables for
algebraically completely integrable Hamiltonian systems  and
Wielandt's theory of subnormal subgroups.
\end{abstract}

\bigskip
\noindent {\sc Keywords.} holomorphic symplectic geometry, compact
hyperk\"ahler manifold, holomorphic Lagrangian torus, action-angle variables, subnormal subgroups

\medskip
 \noindent {\sc AMS Classification.} 14J40, 32G10, 53B99,
20D35

\section{Introduction}

The goal of this paper is to prove the following.
See  Definition \ref{d.symplectic} for the terminology.

\begin{theorem}\label{t.main} Let $M$ be a simply-connected projective
 manifold with a (holomorphic) symplectic form  and let $A \subset M$ be a Lagrangian torus. Then
$M$ has a hypersurface  disjoint from a deformation of $A.$
\end{theorem}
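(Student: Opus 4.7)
The strategy is to produce an almost holomorphic Lagrangian fibration $f\colon M\to B$ whose general fibre is a deformation of $A$. Once such a fibration exists, pulling back a general hyperplane section of a projective compactification of $B$ yields a hypersurface of $M$ disjoint from a general fibre of $f$, proving the theorem.

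\textbf{Step 1 (algebraic action-angle).} Since $A$ is Lagrangian, its holomorphic normal bundle is isomorphic to $\Omega^1_A$, which is trivial because $A$ is a torus. The holomorphic Liouville--Arnold theorem then supplies an analytic neighbourhood $U\supset A$ and a holomorphic Lagrangian fibration $\pi\colon U\to B_0$ onto a polydisc of dimension $n=\dim_\C A$, with $\pi^{-1}(0)=A$ and all fibres Lagrangian tori. The action coordinates on $B_0$ are built from periods of the symplectic form along a basis of $H_1$ of the fibres, so by algebraic complete integrability they endow $B_0$ with an integral affine structure and the fibres with compatible abelian variety structures.

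\textbf{Step 2 (global family of deformations).} The fibres of $\pi$ sweep out an irreducible component $\sT$ of the Douady space of $M$ containing $[A]$, with universal family $\sU\to\sT$ and evaluation $e\colon\sU\to M$. Step~1 shows $e$ is dominant near $A$, so we obtain an induced rational map $\phi\colon M\to\sT$ defined in a neighbourhood of $A$. The theorem reduces to showing that $\phi$ is almost holomorphic, i.e.\ that its general fibre is a deformation of $A$ avoiding the indeterminacy locus; granting this, a general hyperplane pulled back from a projective compactification of $\sT$ supplies the required hypersurface.

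\textbf{Step 3 (monodromy and Wielandt).} The main step is to control the analytic continuation of $\pi$ along loops in the smooth locus $\sT^\circ\subset\sT$. Each loop induces an automorphism of a reference fibre preserving the integral affine structure from the action coordinates, so the total monodromy lies in an arithmetic subgroup $\Gamma$ of $\mathrm{GL}(H_1(A,\Z))$. The subgroups of $\Gamma$ generated by monodromies around successive components of the discriminant form a tower of subnormal subgroups, and Wielandt's joining theorems on subnormal subgroups constrain this tower to have controlled image. From this one deduces that a suitable deformation of $A$ is contained in an affine open subset of $M$, equivalently that $\phi$ is almost holomorphic.

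\textbf{Main obstacle.} Step~3 is the heart of the proof. The essential difficulty is to rule out the pathological scenario in which every deformation of $A$ meets every hypersurface --- equivalently, in which $\phi$ has positive-dimensional indeterminacy through every generic fibre. The use of Wielandt's theory of subnormal subgroups is the novel device that converts this geometric obstruction into a tractable question about arithmetic subgroups of $\mathrm{GL}(2n,\Z)$, which is why this group-theoretic input, rather than purely algebro-geometric arguments, is the crux of the argument.
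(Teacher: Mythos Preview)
Your outline captures the two buzzwords (action--angle, Wielandt) but misidentifies both the monodromy that is actually at stake and the way Wielandt enters, so Step~3 as written does not go through.

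The relevant monodromy is \emph{not} the Gauss--Manin action on $H_1(A,\Z)$. The evaluation map $e\colon\sU\to M$ from the universal family is generically finite of some degree $d$; the multivaluedness that must be killed is the monodromy of this \'etale cover, i.e.\ the image $G\subset\mathfrak S_X$ of $\pi_1(M^{\rm et},x)$ acting on the finite set $X=e^{-1}(x)$ of $d$ points. For each sheet $x_i\in X$ one has the member $A^i_x$ through $x$, and the image $H_i\subset G$ of $\pi_1(A^i_x\cap M^{\rm et},x)$; one checks $H_i\lhd G_{x_i}$ and $H_i=g_iH_1g_i^{-1}$. The dichotomy is: either the subgroup $\langle H_1,\dots,H_d\rangle$ fails to act transitively on $X$, in which case an intermediate \'etale quotient of $\sU^{\rm et}$ compactifies to a nontrivial cover $M'\to M$ splitting over the general member, and the branch divisor (nonempty since $M$ is simply connected) is the sought hypersurface; or it does act transitively, and one must force $d=1$.

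Forcing $d=1$ is where both action--angle and Wielandt are used, but not as you describe. The action--angle input is not merely the local fibration near $A$; it is used to prove that for any two sheets $W^i,W^j$ of the induced local regular web, the sum $W^{ij}=W^i+W^j$ is \emph{integrable} (``pairwise integrability''). Concretely, one shows $W^i\cap W^j$ is tangent to a subtorus foliation of the fibres, and then the action--angle coordinates identify $W^{ij}=(W^i\cap W^j)^{\perp}$ with a coordinate subspace, hence integrable. From this one extracts that for $i\neq j$ the points $x_i,x_j$ lie in distinct $\langle H_i,H_j\rangle$-orbits. Together with transitivity of $\langle H_1,\dots,H_d\rangle$, this makes $(X,G,H_1)$ a ``special triple'': a transitive finite permutation group with a normal subgroup $H$ of a point stabilizer such that (1) the conjugates of $H$ generate a transitive subgroup, yet (2) any two conjugates $H_y,H_z$ never put $y,z$ in the same $\langle H_y,H_z\rangle$-orbit. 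The purely group-theoretic theorem, proved via Wielandt's criterion that $H$ is subnormal in $G$ once it is subnormal in every $\langle H,gHg^{-1}\rangle$, is that no nontrivial special triple exists; hence $|X|=1$, $e$ is birational, and pulling back a general ample divisor from $\sK$ finishes.

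So the gap in your Step~3 is twofold: you are looking at the wrong monodromy (arithmetic in $\mathrm{GL}(2n,\Z)$ rather than the permutation action on the $d$ sheets), and you are missing the geometric mechanism---pairwise integrability of the web---that produces the subnormality hypothesis needed to invoke Wielandt. Your ``tower of subnormal subgroups around discriminant components'' has no counterpart in the actual argument.
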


Recall that a simply-connected compact K\"ahler manifold with a (holomorphic)
symplectic form $\omega$  is called a compact hyperk\"ahler
manifold if $H^0(M, \Omega^2_M) = \C \omega$ (cf.\cite{Hu}).
One central problem in compact hyperk\"ahler manifolds is to find
a good condition for the existence of holomorphic or almost holomorphic fibrations on a compact hyperk\"ahler manifold. In the survey \cite{Be} of problems in hyperk\"ahler geometry,
 Beauville asked whether the existence of a Lagrangian torus in $M$ gives rise to such a fibration (Question 6 in \cite{Be}). As observed
%Keiji Oguiso (personal communication) and, independently,
 by
Greb-Lehn-Rollenske (Corollary 5.6 of \cite{GLR}),  Theorem
\ref{t.main} implies the following, which gives an affirmative
answer to Beauville's question.

\begin{theorem}\label{t.beauville} Let $A \subset M$ be a Lagrangian torus
in a compact hyperk\"ahler manifold. Then there exists   a
meromorphic map $f: M \dasharrow B$ dominant over a projective
variety $B$, such that on a nonempty Zariski open subset $M^o
\subset M$ with $A \subset M^o$, $f|_{M^o}$ is a proper smooth
morphism and $A$ is a fiber of $f$.
\end{theorem}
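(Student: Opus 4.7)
The plan is to derive Theorem~\ref{t.beauville} from Theorem~\ref{t.main} by building an almost-holomorphic fibration out of the universal family of Lagrangian deformations of $A$; this is essentially the observation recorded in Corollary~5.6 of \cite{GLR}. First I would consider the irreducible component $\Sigma$ of the Douady space of $M$ containing $[A]$. By Voisin--Ran unobstructedness of Lagrangian deformations in holomorphic symplectic manifolds, $\Sigma$ is smooth at $[A]$ of dimension $n=\dim A$, with tangent space $H^0(A,\Omega^1_A)\cong \C^n$ via the symplectic isomorphism $N_{A/M}\cong\Omega^1_A$. After shrinking to a Zariski open $\Sigma'$ around $[A]$, every fibre of the universal family $\pi:\sA\to\Sigma'$ is a Lagrangian torus, and the evaluation map $\mathrm{ev}:\sA\to M$ is a map between spaces of the same dimension $2n$.

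Next I would invoke Theorem~\ref{t.main} to produce a point $s_0\in\Sigma$ and a hypersurface $D\subset M$ with $\pi^{-1}(s_0)\cap D=\emptyset$. Since disjointness from $D$ is a Zariski open condition on $\Sigma$, after further shrinking $\Sigma'$ one may assume $\mathrm{ev}(\sA)\subset M^\circ:=M\setminus D$. The crucial remaining task is to show that the tori in the family are generically pairwise disjoint inside $M^\circ$, so that $\mathrm{ev}$ is generically injective (or at worst an \'etale cover) and its inverse defines the almost-holomorphic fibration. Two facts are at the heart of this: the triviality of $N_{F/M}\cong\Omega^1_F$ for any torus fibre $F$, which rules out transversal infinitesimal coincidence of fibres; and the deformation invariance of the intersection number $F\cdot D$, which by the choice of $s_0$ equals zero for every $F$ and therefore tightly controls the degeneration of the family along $D$. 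Together these force the discriminant locus of colliding fibres to be a proper closed subset of $M^\circ$.

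Finally, let $M^o\subset M^\circ$ be the complement of this discriminant and let $B$ be a projective compactification of $\Sigma'$ (for instance the closure of $\Sigma'$ inside the Douady space of $M$). Then the inverse of $\mathrm{ev}$ on $M^o$ defines a holomorphic map $M^o\to\Sigma'\subset B$ that extends to the desired meromorphic $f:M\dasharrow B$, proper and smooth on $M^o$, with $A$ appearing as a fibre. The principal obstacle is the pairwise-disjointness step: Theorem~\ref{t.main} supplies only the disjointness of a \emph{single} fibre from a \emph{single} hypersurface, whereas a fibration requires disjointness between arbitrary \emph{pairs} of fibres over an open subset of $M$. Bridging this gap is exactly the content of Corollary~5.6 of \cite{GLR}, and is where the compact hyperk\"ahler hypothesis (stronger than the projective-symplectic hypothesis of Theorem~\ref{t.main}) enters decisively.
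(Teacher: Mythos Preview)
Your endpoint agrees with the paper's: the authors do not give a self-contained proof of Theorem~\ref{t.beauville} either, but state explicitly that its deduction from Theorem~\ref{t.main} is carried out in \cite{GLR} (their Corollary~5.6), naming \cite{COP}, \cite{Vo}, and the standard hyperk\"ahler machinery of \cite{Hu} as the essential ingredients. Since you also ultimately defer to \cite{GLR}, your proposal is consistent with the paper's treatment at the level of logical dependence.

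That said, your sketch of how the gap is bridged does not match the mechanism the paper points to. The ``two facts at the heart'' you single out --- triviality of $N_{F/M}$ and constancy of the intersection of $F$ with $D$ --- do not by themselves force the evaluation map to be generically injective, and this is not how \cite{GLR} passes from a single disjoint hypersurface to an almost-holomorphic fibration. The actual argument routes through the Beauville--Bogomolov quadratic form on $H^2(M)$ together with the results of \cite{COP} on divisors and fibrations on hyperk\"ahler manifolds: the class of the effective divisor $D$ produced by Theorem~\ref{t.main}, once confronted with the class of the Lagrangian torus, is forced into a special locus of the cone structure on $H^2$, and it is this global cohomological/cone-theoretic input --- not a local normal-bundle argument --- that produces the fibration. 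Your outline would be closer to the actual proof if you replaced the normal-bundle heuristic with an appeal to the $q$-form and the cited results from \cite{COP} and \cite{Hu}; as written, it correctly identifies the obstacle but misattributes its resolution.
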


 The deduction of
Theorem \ref{t.beauville} from Theorem \ref{t.main} is a
combination of a number of prominent results in  hyperk\"ahler
geometry, in particular, \cite{COP} and \cite{Vo}, as well as
the standard hyperk\"ahler machinery (\cite{Hu}). We will not discuss
this deduction, referring the reader to \cite{GLR}.

%Our proof of
%Theorem \ref{t.main} uses completely different ideas and requires
%little machinery of hyperk\"ahler geometry.
%
%be sketched as
%follows. When $M$ in Theorem \ref{t.beauville} is not algebraic, it is
%a direct consequence of a result in \cite{COP} that  the
%  algebraic reduction of $M$ gives the map $f$. When $M$ is algebraic, Theorem \ref{t.main} implies that the pair $(M,A)$ satisfies the condition of \cite{Vo} for the existence of deformations of the pair $(M,A)$  to a non-algebraic pair $(M',A')$. Then the existence of $f$ for
%  $(M,A)$ can be derived from the existence of the map for $(M',A')$. We  refer the reader to \cite{GLR} for details.  This deduction requires a full knowledge of the hyperk\"ahler machinery (e.g. \cite{Hu}).

  Our
proof of Theorem \ref{t.main} uses completely different ideas and
requires little knowledge of hyperk\"ahler geometry.
There are two crucial ingredients in our proof, one geometric and one algebraic. It is easy to see
that deformations of a Lagrangian torus $A \subset M$ give rise to
a multi-valued holomorphic foliation on a Zariski open subset in
$M$. If this foliation is univalent, Theorem \ref{t.main}  is
easily obtainable. Thus the key issue  is how to deal with the
multi-valuedness. To handle this difficulty, we are going to study
the monodromy action of this multi-valued foliation (cf.
Definition \ref{n.group}). The main geometric ingredient,
Proposition \ref{p.integrable}, of our proof is the integrability
of the local distribution given by a pair of sheets of the
multi-valued foliation. This is established by means of the theory
of action-angle variables for completely integrable Hamiltonian
systems (see, e.g., \cite{GS}, Section 44). This `pairwise integrability'
gives  some restrictions on the monodromy action, which is an
action of a finite group on a finite set. However, these
restrictions on the monodromy action do not immediately give us a
solution of the problem. It turns out that a non-trivial result on
the actions of finite groups on finite sets is required. This is
our key algebraic ingredient, Theorem \ref{t.triple}. Logically
speaking, it belongs to abstract group theory, independent of
geometry. Its proof uses Wielandt's work on subnormal subgroups
(\cite{Wi}) and may be of independent interest.

%
%Geometers may raise the question whether the use of abstract group
%theory is really necessary, i.e., whether we can replace Theorem
%\ref{t.triple} by a more geometric argument. Such possibility always exists and interested readers are welcome to search for it. As for the authors,
%it suffices to say that they had tried it, but had not succeeded.
%It was only after a struggle with geometry that they turned to seek the help of group theory.

%Non-algebraic and non-compact complex
%varieties arise in our arguments only when we deal with
%neighborhoods of some algebraic subvariety in an algebraic
%variety. We will  choose the neighborhood sufficiently small, so
%that the issue of accumulation of analytic subvarieties near
%boundary, etc., can be avoided.

\bigskip
\noindent {\bf Acknowledgment} $\;$   When we first started working on Beauville's question (Theorem \ref{t.beauville}),
it was Keiji Oguiso who told us that it could be reduced to proving Theorem \ref{t.main}. We would like to
thank him for this information and encouragement.
%
%We would like to thank
% Keiji Oguiso for drawing our attention to Beauville's question and encouragement.

\bigskip
\noindent {\bf Conventions}

\noindent 1.
Throughout the paper, a manifold is always connected, unless
stated otherwise. A variety may have finitely many irreducible
components. A projective manifold is a nonsingular irreducible
projective variety.

\noindent 2.  When we say an open set, we mean it  in the
classical topology. An open set in Zariski topology will be called
a Zariski open set.

\noindent 3. For a  projective subvariety $A$ in an algebraic
variety $M$, $[A] \in {\rm Hilb}(M)$ denotes the point of the Hilbert scheme determined by $A$. We abuse the
 term `deformation' as follows. A {\em deformation} of $A$ means a subvariety of $M$ corresponding to a point in
an irreducible Zariski open subset containing $[A]$ in ${\rm Hilb}(M)$. A {\em small deformation} of $A$ means a subvariety of $M$
corresponding to a point in a classical neighborhood (or the germ) of $[A]$ in ${\rm Hilb}(M)$.

%\bigskip
\section{A result on actions of finite groups on finite sets}

This section is devoted to the study of certain actions of finite
groups.   The content of this section will  be used only at the
very end of Section \ref{s.Lagrangian}, where we use  Theorem
\ref{t.triple} to complete the proof of Theorem \ref{t.main}.

\begin{definition}\label{d.subnormal}
A subgroup $H$ of a group $G$ is {\em subnormal in } $G$ if there
exist a natural number $\ell$ and a chain of subgroups $$H =
F_{\ell} \subset F_{\ell -1} \subset \cdots \subset F_1 \subset
F_0 = G$$ such that $F_i$ is normal in $F_{i-1}$ for all $i=1,2,
\ldots, \ell.$ \end{definition}

Our main tool is the following result of Wielandt proved in Satz 2
of \cite{Wi}. See 6.7.4 of \cite{KS}  for a particularly beautiful
proof.

\begin{theorem}\label{t.Wielandt}
Let $G$ be a finite group. For a subgroup $H \subset G$ and $g \in
G$, denote by $\langle H, gHg^{-1} \rangle$ the subgroup generated
by $H$ and $gHg^{-1}$. If $H$ is subnormal in $\langle H, gHg^{-1}
\rangle$ for all $g \in G$, then $H$ is subnormal in $G$.
\end{theorem}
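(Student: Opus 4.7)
My plan is to deduce Theorem \ref{t.Wielandt} from the \emph{join form} of Wielandt's result---if $A$ and $B$ are finite subgroups of a common overgroup and $H$ is subnormal in each of them, then $H$ is subnormal in $\langle A, B \rangle$---which I regard as the true combinatorial core. Granting the join form, Theorem \ref{t.Wielandt} follows by a short induction on the number of $G$-conjugates of $H$. Enumerate them as $H = g_1 H g_1^{-1}, \dots, g_k H g_k^{-1}$ with $g_1 = 1$, and set $J_n := \langle g_1 H g_1^{-1}, \dots, g_n H g_n^{-1} \rangle$. I would show by induction on $n$ that $H$ is subnormal in $J_n$. The case $n = 1$ is trivial; for the inductive step, $H$ is subnormal in $J_{n-1}$ by the inductive hypothesis and in $\langle H, g_n H g_n^{-1} \rangle$ by the hypothesis of the theorem, so applying the join form to these two subgroups yields $H$ subnormal in $\langle J_{n-1}, \langle H, g_n H g_n^{-1}\rangle \rangle = J_n$. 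At $n = k$ this gives $H$ subnormal in $J_k = H^G$, the normal closure of $H$ in $G$; extending the resulting subnormal chain by the normal inclusion $H^G \triangleleft G$ produces a subnormal chain for $H$ in $G$.

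The main obstacle, and the actual content of Theorem \ref{t.Wielandt}, is therefore the join form itself. The standard approach is a minimal-counterexample induction on the order of the ambient group $\langle A, B \rangle$. The routine reductions are clean: one passes to the normal closure $H^{\langle A, B \rangle}$ if it is proper, and exploits a maximal subgroup sitting over $H$. The delicate step is the combinatorial analysis of how the subnormal defects of $H$ in $A$ and in $B$ interact under conjugation inside a hypothetical minimal counterexample---this is where the contradiction is extracted, and this is where I expect the real work to lie. For that step I would follow the streamlined proof in 6.7.4 of \cite{KS}, whose defect-theoretic bookkeeping handles the final combinatorial argument cleanly.
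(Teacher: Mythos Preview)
The paper does not give its own proof of Theorem~\ref{t.Wielandt}; it is quoted as a known result, with pointers to Satz~2 of \cite{Wi} and to 6.7.4 of \cite{KS}. So there is no argument in the paper to compare your proposal against.

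That said, your reduction of the conjugate criterion to the join theorem is correct and standard: the induction on $n$ is clean, the identification $\langle J_{n-1},\langle H,g_nHg_n^{-1}\rangle\rangle=J_n$ holds because $g_1=1$, and at $n=k$ you land in $H^G\lhd G$. The issue is that you then defer the join theorem itself to 6.7.4 of \cite{KS}---the very reference the paper already invokes for the whole statement. In effect your proposal, like the paper, outsources the substantive argument to \cite{KS}; the explicit reduction you add is the easy half. If your intent is to supply a genuinely self-contained proof, all of the work still lies in the minimal-counterexample analysis of the join theorem that you only sketch in your final paragraph.
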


\begin{definition}\label{d.triple}
We will consider  triples $(X,G,H)$ consisting of a finite set
$X$, a finite group $G$ acting on $X$ transitively, and a normal
subgroup $H \lhd G_x$ of the stabilizer $G_x$ of a distinguished
point $x \in X$. Given such a triple and an element $y \in X$,
define $H_y:=gHg^{-1},$ where $g \in G$ is an element such that
$y=g \cdot x$. Since $H$ is a normal subgroup of $G_x$, $H_y$ is
independent of the choice of
 $g$. Given a subset $Y \subset X$,   say,
  $Y = \{ y_1, \ldots, y_m\}$, denote by $|Y|$ the cardinality $m$ of $Y$ and denote by $\langle Y \rangle$, or
   $\langle y_1, \ldots, y_m \rangle$, the subgroup of $G$
 generated by $\cup_{y \in Y}H_y$. For example, $H= \langle x \rangle$.
 A triple $(X,G,H)$ will be called {\em trivial} if $|X|=1$.
  A triple $(X,G,H)$ will be called {\em special} if the following
two conditions are satisfied.  \begin{enumerate} \item[(1)]
$\langle X \rangle$ acts transitively on $X$. \item[(2)] For any
two distinct elements $y\neq z \in X$, $y$ and $z$ are not in the
same $\langle y,z \rangle$-orbit.
\end{enumerate}
\end{definition}

Our result is the following.

\begin{theorem}\label{t.triple}
There are no non-trivial special triples.
\end{theorem}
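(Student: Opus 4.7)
The plan is to proceed by strong induction on $|X|$: I will assume $(X,G,H)$ is a non-trivial special triple with $|X|$ minimal and derive a contradiction by showing that $H$ is subnormal in $G$, which is incompatible with $H\subseteq G_x\subsetneq G$. First, I would reduce to the case $G=\langle X\rangle$: this replacement preserves transitivity (condition (1)) and condition (2), still contains $H$ normally in its stabilizer, and has the crucial property that $G$ equals the normal closure $H^G$ of $H$ in $G$ (since $G$ is transitive, $\{H_y:y\in X\}=\{gHg^{-1}:g\in G\}$).

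The heart of the argument is to show that for every $y\in X\setminus\{x\}$, the subgroup $H$ is subnormal in $K:=\langle x,y\rangle$. Let $Z^{(0)}$ be the $K$-orbit of $x$; condition (2) forces $y\notin Z^{(0)}$, so $|Z^{(0)}|<|X|$. The triple $(Z^{(0)},K,H)$ inherits condition (2) from $(X,G,H)$ automatically, because $H_z$ is the same whether computed in $G$ or in $K$. By the inductive minimality, if $(Z^{(0)},K,H)$ were special it would be trivial, i.e.\ $|Z^{(0)}|=1$ and $K\subseteq G_x$, forcing $H\lhd K$. Otherwise condition (1) fails, and setting $K^{(1)}:=\langle Z^{(0)}\rangle_K$ and $Z^{(1)}:=K^{(1)}\cdot x$ gives $Z^{(1)}\subsetneq Z^{(0)}$. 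Iterating this construction, one produces a strictly descending chain
\[
Z^{(0)}\supsetneq Z^{(1)}\supsetneq\cdots\supsetneq Z^{(m)}=\{x\},
\]
with subgroups $K=K^{(0)}\supseteq K^{(1)}\supseteq\cdots\supseteq K^{(m)}\subseteq G_x$ and $H\lhd K^{(m)}$. For each $i$, $K^{(i+1)}$ is generated by $\{H_z:z\in Z^{(i)}\}$, and for $g\in K^{(i)}$ one has $gH_zg^{-1}=H_{g\cdot z}$ with $g\cdot z\in Z^{(i)}$ (as $K^{(i)}$ preserves its orbit $Z^{(i)}$); hence $K^{(i+1)}\lhd K^{(i)}$. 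Thus
\[
H\lhd K^{(m)}\lhd K^{(m-1)}\lhd\cdots\lhd K^{(0)}=K
\]
is a subnormal chain.

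Next, Wielandt's theorem (Theorem \ref{t.Wielandt}) applies to $G$: for any $g\in G$, writing $y=g\cdot x$, we have $\langle H,gHg^{-1}\rangle=\langle x,y\rangle$, in which $H$ is subnormal by the previous step, so $H$ is subnormal in $G$. A short chain-shortening argument concludes the proof: in a shortest subnormal chain $H=F_\ell\lhd\cdots\lhd F_0=G$, the group $F_1$ is normal in $G$ and contains $H$, hence contains $H^G=G$; so $F_1=G$, contradicting minimality of $\ell$ unless $\ell=0$, i.e.\ $H=G$. But $H\subseteq G_x\subsetneq G$ because $|X|>1$, the desired contradiction.

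The main obstacle I foresee is the descent constructing the subnormal chain in $K$: at each recursive stage one must check carefully that the triple structure transfers to the smaller set, that condition (2) is inherited, that the $H_z$ are unambiguously defined within the smaller group, and that condition (1)'s failure genuinely shrinks the orbit. The indispensability of condition (2) is striking here — without it, $Z^{(0)}$ could equal $X$ and the induction would never get started.
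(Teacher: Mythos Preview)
Your proof is correct and follows essentially the same route as the paper's: the descent $K^{(i+1)} = \langle K^{(i)}\cdot x\rangle$ is exactly the operator $F\mapsto F^\circ := \langle F\cdot x\rangle$ of the paper's Lemma~\ref{l.2}, your observation $G=H^G$ is the content of Lemma~\ref{l.1}, and the appeal to Wielandt's theorem is identical. The only cosmetic differences are that the paper minimizes lexicographically over $(|X|,|G|)$ rather than replacing $G$ by $\langle X\rangle$ at the outset, and packages the descent step as a standalone lemma rather than unfolding the iteration inline.
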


\begin{proof} Suppose that  there exists a special triple $(X,G,H)$
  with $|X| >1$. Choose one such $(X,G,H)$ with minimal possible $|X|>1$
and among those with minimal $|X|$, one with minimal $|G|$. If
$(X, G, H)$ is a special triple, then so is $(X, \langle X
\rangle, H)$. By the minimality of $|G|$, we have $G= \langle X
\rangle$.

     \begin{lemma}\label{l.1}
     There is no normal subgroup $N \lhd G$ such that $H \subset N \neq G$. In particular, $H$ is not
     a subnormal subgroup of $G$. \end{lemma}

     \begin{proof}
     Assume the contrary and choose such a normal subgroup $N$. Then for any $g \in G$,
     $$gHg^{-1} \subset gNg^{-1} = N.$$ Thus $H_y \subset N \neq G$ for all $y \in X$. This contradicts $\langle X \rangle =G$. \end{proof}

     \begin{lemma}\label{l.2}
     For a subgroup $F \subset G$, let $F\cdot x$ denote
the $F$-orbit containing $x$ and let $F^{\circ}=\langle F\cdot
x\rangle$. Suppose that $F$ is a proper subgroup of $G$ containing
$H$. Then $H \subset F^{\circ} \lhd F$ and either $F^{\circ} \neq
F$ or $H=F^{\circ}=F$.
  \end{lemma}

  \begin{proof} By definition, $F^{\circ}$ is generated by $\{ fHf^{-1}, f \in F\}.$
  Since $H \subset F$, we have $H \subset F^{\circ} \lhd F$.

  Suppose that  $F^{\circ}=F$. Then $F^{\circ}$ acts transitively on $F \cdot x$ and $F=\langle F\cdot x \rangle$.
 This implies that $(F \cdot x, F, H)$ is a special triple.
  If $|F \cdot x| \neq 1$, then, by the minimality assumption,
   $F \cdot x = X$ implying $$F=F^{\circ} =   \langle F \cdot x\rangle = \langle X \rangle = G,$$ a contradiction.
  Thus $|F \cdot x| = 1$, which implies  $$F=F^{\circ}= \langle F\cdot x\rangle=\langle x \rangle=H.$$ \end{proof}

 Now we derive a contradiction as follows. Pick any $y \neq x \in X$.
 Define $F_1 :=\langle x,y \rangle$. By Definition \ref{d.triple} (2),
 $x$ and $y$ are in two different $F_1$-orbits. This implies
 that $H \subset F_1 \neq G$. If $H=F_1$, we stop here. If $H \neq F_1$,
 an application of Lemma \ref{l.2},
   gives $F_2:= F_1^{\circ}$ satisfying $$H \subset F_2 \lhd F_1$$ with $F_2 \neq F_1$.
    If $H=F_2$, we stop. Otherwise, we can repeat
   the process to get $F_3=F_2^{\circ}$ satisfying $$H \subset F_3 \lhd F_2 \lhd F_1$$
    with $F_3 \neq F_2$. Repeating this,
   we get a natural number $\ell$ and a sequence of subgroups
   $$H = F_{\ell} \lhd F_{\ell-1} \lhd \cdots \lhd F_2 \lhd F_1$$
   such that $F_{i}$ is a proper normal subgroup of $F_{i-1}$ for each $i$. Thus $H$ is subnormal in
   $F_1= \langle x,y \rangle$ for any choice of $y$. In other words, $H$ is subnormal in
   $\langle H, gHg^{-1} \rangle$ for any $g \in G$. By Wielandt's Theorem,
   $H$ is subnormal in $G$. This is a contradiction to Lemma \ref{l.1}.
   \end{proof}

\section{Webs of submanifolds}

As explained in the introduction, our main object of study is a multi-valued foliation on a projective manifold
arising from deformations of an algebraic submanifold. It is convenient to introduce the following to
describe such a multi-valued foliation.

\begin{definition}\label{d.web} Let $M$ be a projective manifold.
A {\em web of submanifolds} on $M$  is the following data, to be
denoted by $\sW= [\mu: \sU \to M, \rho: \sU \to \sK]$.
\begin{itemize} \item[(1)] A generically finite surjective
morphism $\mu: \sU \to M$ from a projective manifold $\sU$.
\item[(2)] A projective morphism $\rho: \sU \to \sK$ with
connected fibers onto a projective manifold $\sK$ with a Zariski
open subset $\sK^{\rm bihol} \subset \sK$ such that for each $a
\in \sK^{\rm bihol}$,
\begin{enumerate} \item[(i)] $\rho^{-1}(a)$ is smooth; \item[(ii)]
$\mu|_{\rho^{-1}(O_a)}: \rho^{-1}(O_a) \to \mu(\rho^{-1}(O_a))$ is
biholomorphic for some open neighborhood $O_{a}$ of $a$ in
$\sK^{\rm bihol}$; \item[(iii)] $\mu(\rho^{-1}(b)) \neq
\mu(\rho^{-1}(a))$ if  $b \in \sK, b \neq a.$ \end{enumerate}
\end{itemize}
 For a point $a \in \sK^{\rm bihol}$, the submanifold
$\mu(\rho^{-1}(a))$ in $Z$ is called a {\em member of the web}
$\sW$.
\end{definition}

We skip the proof of the following  easy proposition.

\begin{proposition}\label{p.etale}
Given a web $\sW= [\mu: \sU \to M, \rho: \sU \to \sK]$ of
submanifolds, there exists a nonempty Zariski open subset $M^{\rm
et} \subset M$ such that  $$\sU^{\rm et} := \mu^{-1}(M^{\rm et})
\subset \rho^{-1}(\sK^{\rm bihol})$$ and $\mu|_{\sU^{\rm et}}: \sU^{\rm et} \to
M^{\rm et}$ is \'etale.
\end{proposition}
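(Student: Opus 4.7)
The plan is to carve out $M^{\rm et}$ by discarding from $M$ two explicit proper closed subvarieties, one to force the inclusion $\mu^{-1}(M^{\rm et}) \subset \rho^{-1}(\sK^{\rm bihol})$ and one to force $\mu$ to be finite (and not merely generically finite) on the resulting open set.

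First, since $\sK^{\rm bihol}$ is Zariski open in the irreducible variety $\sK$ and $\rho$ is surjective with connected fibers, the preimage $\rho^{-1}(\sK \setminus \sK^{\rm bihol})$ is a proper closed subvariety of the irreducible variety $\sU$, hence has dimension strictly less than $\dim \sU$. Because $\sU$ and $M$ are projective, $\mu$ is proper; because it is generically finite, $\dim \sU = \dim M$. Therefore $Z_1 := \mu(\rho^{-1}(\sK \setminus \sK^{\rm bihol}))$ is a proper closed subvariety of $M$, and I set $M_1 := M \setminus Z_1$.

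For the second open set, apply upper semicontinuity of fiber dimension to $\mu$: the locus $E \subset \sU$ where $\mu$ has positive-dimensional fibers is closed, and since $\mu$ is generically finite it is a proper subvariety. Then $Z_2 := \mu(E) \subset M$ is closed (by properness) with $\dim Z_2 < \dim M$, and over $M_2 := M \setminus Z_2$ the morphism $\mu$ has finite fibers, hence is finite.

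Now take $M^{\rm et} := M_1 \cap M_2$, a nonempty Zariski open subset of $M$. By construction $\sU^{\rm et} := \mu^{-1}(M^{\rm et}) \subset \mu^{-1}(M_1) \subset \rho^{-1}(\sK^{\rm bihol})$, giving the required inclusion. For any $u \in \sU^{\rm et}$, Definition \ref{d.web}(ii) applied at $a := \rho(u) \in \sK^{\rm bihol}$ produces a classical open neighborhood $O_a$ with $\mu|_{\rho^{-1}(O_a)}$ a biholomorphism onto its image; since $\dim \rho^{-1}(O_a) = \dim \sU = \dim M$, that image is open in $M$, so $\mu$ is a local biholomorphism — in particular its differential is an isomorphism — at $u$. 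Combining this with finiteness over $M_2$ and smoothness of source and target, $\mu|_{\sU^{\rm et}}$ is finite and unramified, hence étale.

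No single step is a genuine obstacle; the only point worth checking carefully is that Definition \ref{d.web}(ii), which a priori only asserts biholomorphism between $\rho^{-1}(O_a)$ and its set-theoretic image, really forces the differential of $\mu$ to be an isomorphism at $u$ — and this is immediate from the equality of dimensions $\dim \rho^{-1}(O_a) = \dim M$.
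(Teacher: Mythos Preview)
The paper omits the proof of Proposition~\ref{p.etale}, declaring it easy, so there is no reference argument to compare against. Your proof is correct.

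One minor remark: the second excision (of $Z_2 = \mu(E)$) is actually redundant. Once you observe via Definition~\ref{d.web}(ii) that $\mu$ is a local biholomorphism at every point of $\mu^{-1}(M_1) \subset \rho^{-1}(\sK^{\rm bihol})$, each fiber of $\mu$ over $M_1$ is already discrete; since $\mu$ is proper, those fibers are compact and hence finite, so $\mu|_{\mu^{-1}(M_1)}$ is proper with finite fibers, i.e.\ finite. Thus $M^{\rm et} = M_1$ alone would suffice. This is only a streamlining, not a correction---your argument stands as written.
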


\begin{notation}\label{n.A}
In the setting of Proposition \ref{p.etale}, let $d$ be the degree
of $\mu$. For  $y \in M^{\rm et}$, write $\mu^{-1}(y) = \{ y_1,
\ldots, y_d\}$ and $I = \{ 1, \ldots, d\}$. For each $i \in I$, we
set $$A^i_y := \mu(\rho^{-1}(\rho(y_i))).$$ For each pair $(i,j) \in I \times I$,
let $A^i_{y_j}$ be the irreducible component of $\mu^{-1}(A^i_y)$ containing $y_j$. This
notation is not very precise, because it involves an ordering of
$\mu^{-1}(y)$. But this should not cause  confusion, because it
will be applied to a given point $x \in M^{\rm et}$ and points $y$
in a sufficiently small neighborhood of $x$ where we can always
fix an ordering of $\mu^{-1}(y)$ in a uniform manner.
\end{notation}

We recall the following standard topological fact.

\begin{lemma}\label{l.Ehresman}  Let $\rho: \sU \to \sK$ be a  proper morphism
between projective manifolds with connected fibers. Let $\sE
\subset  \sU$ be a proper subvariety. Then there exists a proper
subvariety $\sB \subset \sK$ such that the restriction $\rho'$ of
$\rho$ to the complement of $\sB$ and $\sE$, i.e.,
$$\rho':  \rho^{-1}(\sK \setminus \sB)  \setminus \sE  \to \sK \setminus \sB,$$ is locally
differentiably trivial over the base in the sense of \cite{CMP}
Theorem 4.1.2. \end{lemma}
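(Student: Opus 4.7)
The plan is to deduce the lemma from a stratified version of Ehresmann's fibration theorem, namely Thom's first isotopy lemma applied to a Whitney stratification of the pair $(\sU, \sE)$; this is the standard toolkit underlying the reference to \cite{CMP}, Theorem 4.1.2, invoked in the statement.

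First, I would fix a Whitney stratification $\sU = \bigsqcup_\alpha S_\alpha$ of $\sU$ by finitely many smooth connected locally closed algebraic subsets such that $\sE$ is a union of strata. The existence of such a stratification for any pair consisting of a complex algebraic variety and a closed algebraic subvariety is classical (Whitney--Thom--Verdier). Since $\sU$ itself is smooth, one of the strata is the open set $\sU \setminus \sE$, while the remaining strata partition $\sE$ into smooth pieces.

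Next, I would use generic smoothness to control how each stratum sits over $\sK$. For every $\alpha$ the restriction $\rho|_{S_\alpha} \colon S_\alpha \to \sK$ is a morphism of complex algebraic varieties into a smooth variety, so there is a proper Zariski closed subset $\sB_\alpha \subsetneq \sK$ such that $\rho|_{S_\alpha}$ is a submersion of smooth manifolds over $\sK \setminus \sB_\alpha$. Because the stratification is finite, the union $\sB := \bigcup_\alpha \sB_\alpha$ is a proper Zariski closed subset of $\sK$, and over $\sK \setminus \sB$ every stratum is mapped submersively to the base.

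Finally, the restriction $\rho \colon \rho^{-1}(\sK \setminus \sB) \to \sK \setminus \sB$ is proper, its domain carries the induced Whitney stratification, and each stratum is submersively mapped to the base. Thom's first isotopy lemma then gives, for every $b \in \sK \setminus \sB$, an open neighborhood $U$ of $b$ together with a stratum-preserving homeomorphism $\rho^{-1}(U) \cong U \times \rho^{-1}(b)$ over $U$ which is a diffeomorphism on each stratum. Restricting this trivialization to the single open stratum $\sU \setminus \sE$ yields the required local differentiable triviality of $\rho'$. The only nontrivial input is the existence of a Whitney stratification compatible with $\sE$; once this is granted, the rest is a mechanical assembly of generic smoothness and Thom's lemma, so I expect no real obstacle.
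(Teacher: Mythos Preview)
Your argument is correct, but it is not the route the paper takes. The paper first replaces $\sU$ by a log-resolution of the pair $(\sU,\sE)$---harmless because the map is unchanged on $\sU\setminus\sE$---so that $\sE$ becomes a simple normal crossing divisor. It then uses generic smoothness on each component of $\sE$ and on each intersection stratum to find a Zariski open $\sK_1\subset\sK$ over which $\sE$ is \emph{relatively} simple normal crossing; in local product coordinates this exhibits the map as ``evenly submersive'' in the sense of \cite{CMP}, p.~133, and the version of Ehresmann's theorem stated there (Theorem~4.1.2) then applies directly. So the paper stays within the Ehresmann framework and avoids Thom's isotopy lemma, at the cost of passing to a resolution. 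Your approach trades the resolution step for the heavier Whitney--Thom machinery: you stratify $(\sU,\sE)$ as it stands, make each stratum submersive by generic smoothness, and invoke Thom's first isotopy lemma, whose trivialization restricted to the single open stratum $\sU\setminus\sE$ gives exactly the differentiable local triviality of $\rho'$. Both arguments are standard and yield the same conclusion; yours is a bit more conceptual and works without modifying $\sU$, while the paper's is closer to the cited reference and needs only the classical Ehresmann theorem.
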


\begin{proof}
By Ehresman's Theorem (\cite{CMP} Theorem 4.1.2) it suffices to
prove that $\rho'$ is evenly submersive in the sense of
\cite{CMP}, p. 133. Replacing $\sU$ by a log-resolution of $(\sU,
\sE)$, we may choose a Zariski open $\sK_1 \subset \sK$ such that
$\sE_1:=\rho^{-1}(\sK_1) \cap \sE$ is the union of smooth
hypersurfaces with simple normal crossing and the restriction of
$\rho$ on each component of $\sE_1$ and each intersection stratum
of the components is a smooth morphism. In other words, $\sE_1
\subset \rho^{-1}(\sK_1)$ is  relatively simple normal crossing
with respect to $\rho$. In particular, for a given point $x \in
\sK_1$ and $y \in \rho^{-1}(x)$, there exists a neighborhood $O_x$
of $x$ in $\sK_1$ and a neighborhood $U_y$ of $y$ in
$\rho^{-1}(O_x)$ such that
\begin{itemize} \item[1.] there exists a biholomorphic map $h:
U_y \to W \times O_x$, where $W$ is a domain in $ \C^n, n= \dim
\sU - \dim \sK$, \item[2.] $\rho'|_{U_y} = p_2 \circ h,$ where
$p_2: W \times O_x \to O_x$ is the projection to the second
factor, and \item[3.] $h(\sE \cap U_y)$ is the product of a simple
normal crossing hypersurface in $W$ and $\sO_x.$ \end{itemize}
From the compactness of $\sE \cap \rho^{-1}(x)$, finitely many of
such neighborhoods $U_y$ cover $\sE$. Thus we can fix the
neighborhood $O_x$ for all $y \in \rho^{-1}(x)$. Now if we choose $\sB$
to be the complement $\sK \setminus \sK_1$, then the corresponding
$\rho'$ is evenly submersive.
\end{proof}

\begin{proposition}\label{p.sC}
In the setting of Proposition \ref{p.etale}, there exists a proper
subvariety $\sC \subset \sK$ containing $\sK \setminus \sK^{\rm
bihol}$  such that the restriction of $\rho$ to $\sU^{\rm et}
\setminus \rho^{-1}(\sC)$ is locally differentiably trivial over
$\sK \setminus \sC$ and  the morphism $\mu$ gives an embedding of
$\rho^{-1}(a) \cap \sU^{\rm et}$ into $M^{\rm et}$ for each $a \in
\sK \setminus \sC$.
\end{proposition}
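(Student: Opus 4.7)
The plan is to construct $\sC$ as $\sB \cup (\sK \setminus \sK^{\rm bihol})$, where $\sB \subset \sK$ is supplied by Lemma \ref{l.Ehresman} applied to a well-chosen proper subvariety $\sE \subset \sU$. The natural choice is $\sE := \sU \setminus \sU^{\rm et}$, which is indeed a proper subvariety of $\sU$ because $\sU^{\rm et} = \mu^{-1}(M^{\rm et})$ is a nonempty Zariski open subset (the $\mu$-preimage of the Zariski open set $M^{\rm et}$ produced by Proposition \ref{p.etale}). Lemma \ref{l.Ehresman} then gives a proper subvariety $\sB \subset \sK$ such that $\rho \colon \rho^{-1}(\sK \setminus \sB) \setminus \sE \to \sK \setminus \sB$ is locally differentiably trivial.

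With $\sC := \sB \cup (\sK \setminus \sK^{\rm bihol})$, the first conclusion follows essentially by inspection. Since $\sC \supset \sB$ and $\sU^{\rm et} = \sU \setminus \sE$, one has
$$\sU^{\rm et} \setminus \rho^{-1}(\sC) \;=\; \rho^{-1}(\sK \setminus \sC) \setminus \sE \;\subset\; \rho^{-1}(\sK \setminus \sB) \setminus \sE,$$
so the local differentiable trivializations of $\rho$ over $\sK \setminus \sB$ from Lemma \ref{l.Ehresman} restrict to trivializations over the smaller base $\sK \setminus \sC$.

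For the embedding conclusion, any $a \in \sK \setminus \sC$ automatically lies in $\sK^{\rm bihol}$, so Definition \ref{d.web}(ii) furnishes an open neighborhood $O_a \subset \sK^{\rm bihol}$ of $a$ over which $\mu$ is biholomorphic onto its image. Restricting to the single fiber $\rho^{-1}(a)$ shows that $\mu$ is a closed embedding of $\rho^{-1}(a)$ into $M$, and further restricting to the Zariski open subset $\rho^{-1}(a) \cap \sU^{\rm et}$ yields the required embedding into $M^{\rm et}$.

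I do not expect a substantive obstacle: the two tools — the local biholomorphic behavior built into the definition of $\sK^{\rm bihol}$ and the Ehresmann-type Lemma \ref{l.Ehresman} — match the two conclusions one-to-one, and their bad loci combine additively into $\sC$. The only point requiring a moment's care is checking that $\sU^{\rm et}$ is genuinely Zariski open in $\sU$, so that $\sE = \sU \setminus \sU^{\rm et}$ qualifies as input to Lemma \ref{l.Ehresman}; this is precisely why Proposition \ref{p.etale} is phrased in terms of a Zariski open $M^{\rm et}$.
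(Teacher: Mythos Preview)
Your proposal is correct and follows exactly the paper's approach: the paper also takes $\sE = \mu^{-1}(M \setminus M^{\rm et})$ (which equals your $\sU \setminus \sU^{\rm et}$), applies Lemma~\ref{l.Ehresman} to obtain $\sB$, and sets $\sC = \sB \cup (\sK \setminus \sK^{\rm bihol})$. You have in fact supplied more detail than the paper, which simply asserts ``we have the result'' after defining $\sC$.
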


\begin{proof}
Let $\sE \subset \sU$ be $\mu^{-1}(M \setminus M^{\rm et})$ and
apply Lemma \ref{l.Ehresman} to obtain $\sB \subset \sK$.  Setting
$\sC = \sB \cup (\sK \setminus \sK^{\rm bihol})$, we have the
result. \end{proof}

\begin{definition}\label{n.group}
In the setting of Proposition \ref{p.sC}, define $M_o := M^{\rm
et} \setminus \mu(\rho^{-1}(\sC))$. Fix a  point $x \in M_o.$  Let
$X$ be the finite set $\mu^{-1}(x)= \{ x_1, \ldots, x_d\}$ using
Notation \ref{n.A},  and let $\mathfrak{S}_X$ be the symmetry group
on $X$. The \'etale cover $\sU^{\rm et} \to M^{\rm et}$
induces a natural homomorphism
$$ \alpha: \pi_1(M^{\rm et}, x) \to \mathfrak{S}_X$$ whose image will be denoted
by $G$. By the connectedness of $\sU^{\rm et}$, $G$ acts
transitively on $X$. For each $1 \leq i \leq d$, let $H_i \subset
G$ be the image of the homomorphism
$$ \alpha \circ \lambda_i: \pi_1(A^i_x \cap M^{\rm et}, x) \to \mathfrak{S}_X,$$ where
 $$ \lambda_i:  \pi_1(A^i_x \cap M^{\rm et}, x)
\to \pi_1(M^{\rm et}, x)$$ is  induced by the inclusion $A^i_x
\subset M$.
\end{definition}

\begin{proposition}\label{p.conjugate}
In Definition \ref{n.group}, for all $1 \leq i \leq d$, denote by
$G_i \subset G$  the isotropy subgroup of $x_i \in X$. We have
$G_i=g_i G_1 g_i^{-1}$
and $H_i=g_i H_1 g_i^{-1}$ for any $g_i\in G$ with $g_i\cdot x_1=x_i$ and
$H_i$ is a normal subgroup of $G_i$.
\end{proposition}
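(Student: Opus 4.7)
The plan is to establish three claims in sequence: (a) $H_i \subset G_i$, so that normality is a meaningful assertion; (b) the group-theoretic identity $G_i = g_i G_1 g_i^{-1}$; and (c) the master identity $g H_i g^{-1} = H_{g \cdot i}$ for every $g \in G$ and every $i \in I$. Claim (c) specializes at once to $H_i = g_i H_1 g_i^{-1}$ (taking $g = g_i$ with $i$ in place of $1$, so that $g_i \cdot 1 = i$) and to the normality $H_i \lhd G_i$ (taking $g \in G_i$, which fixes $i$). Thus (a), (b), (c) together imply the proposition.

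Claim (b) is immediate from the transitivity of $G$ on $X$. For (a), I would first identify the irreducible component $A^i_{x_i}$ of $\mu^{-1}(A^i_x)$ with the fiber $\rho^{-1}(\rho(x_i))$: this fiber is smooth and connected by Definition \ref{d.web}(i), lies in $\mu^{-1}(A^i_x)$ by the very definition of $A^i_x$, contains $x_i$, and has dimension equal to $\dim A^i_x$ since $\mu$ is locally biholomorphic there; irreducibility of $A^i_{x_i}$ then forces the two to coincide. Because $x \in M_o$ we have $\rho(x_i) \in \sK^{\rm bihol}$, so Definition \ref{d.web}(ii) supplies a neighborhood $\rho^{-1}(O_{\rho(x_i)}) \supset A^i_{x_i}$ on which $\mu$ is biholomorphic; in particular $\mu^{-1}(x) \cap A^i_{x_i} = \{x_i\}$. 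Consequently, the lift of any loop in $A^i_x \cap M^{\rm et}$ based at $x$, starting at $x_i$, stays in the connected set $A^i_{x_i}$ and must end in $A^i_{x_i} \cap \mu^{-1}(x) = \{x_i\}$, proving $H_i \subset G_i$.

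For (c), fix $g \in G$, set $j := g \cdot i$, and represent $g$ by a loop $\beta$ in $M^{\rm et}$ based at $x$ whose lift $\tilde\beta$ in $\sU^{\rm et}$ is a path joining $x_j$ and $x_i$ (oriented in accordance with the paper's convention for $\alpha$). Since $\rho^{-1}(\sC)$ is a proper subvariety of $\sU$, I may homotope $\tilde\beta$ into $\sU^{\rm et} \setminus \rho^{-1}(\sC)$. Proposition \ref{p.sC} then yields a smooth trivialization of $\rho$ along $\rho \circ \tilde\beta$, namely diffeomorphisms $\phi_t: A^j_{x_j} \to \rho^{-1}(\rho(\tilde\beta(t)))$ with $\phi_0 = \mathrm{id}$ and $\phi_t(x_j) = \tilde\beta(t)$, giving at $t = 1$ a diffeomorphism $\phi_1: A^j_{x_j} \to A^i_{x_i}$ sending $x_j$ to $x_i$. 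For $h' \in H_j$ represented by a loop $\eta'$ in $A^j_x \cap M^{\rm et}$ based at $x$, lift to a loop $\tilde\eta'$ in $A^j_{x_j}$ at $x_j$ (using (a)); set $\tilde\eta := \phi_1 \circ \tilde\eta'$, a loop in $A^i_{x_i}$ at $x_i$, and $\eta := \mu \circ \tilde\eta$, a loop in $A^i_x \cap M^{\rm et}$ at $x$. The two-parameter map $(s,t) \mapsto \mu(\phi_t(\tilde\eta'(s)))$ is a free homotopy from $\eta'$ to $\eta$ whose basepoint sweeps out $\beta$; the standard boundary-of-the-square computation gives $[\eta] = [\beta]^{-1}[\eta'][\beta]$ in $\pi_1(M^{\rm et}, x)$, and applying $\alpha$ yields $\alpha(\eta) = g^{-1}\alpha(\eta')g$, so $g^{-1} H_j g \subset H_i$. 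Reversing the orientation of $\tilde\beta$ yields the reverse inclusion, whence $g H_i g^{-1} = H_j = H_{g \cdot i}$, proving (c).

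The main obstacle I foresee is in the transport step: the local trivializations of $\rho$ supplied by Proposition \ref{p.sC} must be patched coherently along $\rho \circ \tilde\beta$ so that the composite $\phi_1$ is an unambiguously defined diffeomorphism, and the pushed-down two-parameter map must genuinely take values in $M^{\rm et}$, not merely in $M$, for the homotopy-class computation in $\pi_1(M^{\rm et}, x)$ to be valid. Keeping $\tilde\beta$ inside $\sU^{\rm et} \setminus \rho^{-1}(\sC)$ places each fiber $\rho^{-1}(\rho(\tilde\beta(t)))$ inside the locus where the trivialization holds; but one still has to verify that the image of the homotopy avoids the ramification locus of $\mu$, which is the technical heart of the argument.
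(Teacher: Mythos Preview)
Your proposal is correct and follows essentially the same route as the paper: both transport loops via the local differential triviality of Proposition~\ref{p.sC} along a path in $\sU^{\rm et}\setminus\rho^{-1}(\sC)$ and read off the conjugation identity from the resulting free homotopy with moving basepoint. The obstacle you flag at the end is illusory once Proposition~\ref{p.sC} is read carefully: the trivialization there is of the \emph{open} fibers $\rho^{-1}(a)\cap\sU^{\rm et}$ over $\sK\setminus\sC$, so your $\phi_t$ map $A^j_{x_j}\cap\sU^{\rm et}$ to $\rho^{-1}(\rho(\tilde\beta(t)))\cap\sU^{\rm et}$, and since the lift $\tilde\eta'$ already lies in $\sU^{\rm et}$ the entire two-parameter family stays in $\sU^{\rm et}$, whence its $\mu$-image lands in $M^{\rm et}$.
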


\begin{proof}
Choose $g'_i \in \pi_1(M^{\rm et}, x)$ with $\alpha (g'_i) = g_i$.
Let
$$\gamma_i: [0,1] \to \sU^{\rm et}$$ be a path representing  $g'_i$,
i.e., $$\gamma_i(0)=x_1, \; \gamma_i(1)= x_i \mbox{ and the class
of } \mu \circ \gamma_i \mbox{ belongs to  } g'_i.$$  Since $X
\cap \rho^{-1}(\sC) = \emptyset$,  we can assume that
$\gamma_i([0,1])$ is disjoint from $\rho^{-1}(\sC)$. Set $c_t :=
\rho(\gamma_i(t)) \in \sK \setminus \sC$. Then the family $$\{\mu
(\rho^{-1}(c_t)) \cap M^{\rm et}, t \in [0,1]\}$$ is locally
differentiably trivial with
$$\mu (\rho^{-1}(c_0)) \cap M^{\rm et} = A^1_x \cap M^{\rm et} \mbox{ and }
\mu(\rho^{-1}(c_1)) \cap M^{\rm et} = A^i_x \cap M^{\rm et}.$$
Therefore, given a closed path $$\beta_0: [0,1] \to A^1_x
\cap M^{\rm et}, \; \beta_0(0) = \beta_0(1) = x$$ representing an element of
$$\pi_1(\mu (\rho^{-1}(c_0))\cap M^{\rm et}, x) = \pi_1(A^1_x \cap M^{\rm et},x),$$ we can find a
continuous family of closed paths
$$\big\{\beta_t: [0,1] \to \mu (\rho^{-1}(c_t))\cap M^{\rm et}, \;
\beta_t(0) = \beta_t(1) = \mu(\gamma_i(t)), \; t \in [0,1] \big\}.$$
Thus in $M^{\rm et}$,  $\beta_0$ is homotopic to
$$(\mu \circ \gamma_i)^{-1} \cdot \beta_1 \cdot (\mu \circ
\gamma_i)$$ for some closed path $\beta_1$ based at $x$
representing an element of $ \pi_1(A^i_x \cap M^{\rm et}, x)$.
This implies that $$[\beta_0] = (g'_i)^{-1} \cdot [\beta_1] \cdot
g'_i \; \mbox{ in } \pi_1(M^{\rm et}, x),$$ proving $H_i= g_i H_1
g_i^{-1}$. Setting $i=1$, we conclude that
$H_1$ is normal in $G_1$. Therefore $H_i=g_i H_1 g_i^{-1}$ is normal in $g_i G_1 g_i^{-1}=G_i$
for all $i.$
\end{proof}

\begin{definition}\label{d.split}
Let $f: M' \to M$ be a generically finite surjective morphism
between two irreducible nonsingular varieties.  Given an
irreducible subvariety $A \subset M$, we say that $f$ {\em splits
over} $A$ if for each irreducible component $A'$ of $f^{-1}(A)$
satisfying $f(A') = A$, the restriction $f|_{A'}: A' \to A$ is
birational.
\end{definition}

\begin{proposition}\label{p.equivalence}
In Definition \ref{n.group}, denote by $\sH \subset G$ the
subgroup  generated by $H_1, \ldots, H_d.$ Assume that $\sH$ does
not act transitively on $X$. Then there exists a projective
manifold $M'$ and a generically finite surjective morphism $f: M'
\to M$ which is not birational and splits over $\mu(\rho^{-1}(a))$
for a general $a \in \sK$.
\end{proposition}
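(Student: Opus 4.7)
The plan is to build $f$ via the Galois correspondence applied to the subgroup $\sH G_1\subset G$. First I observe that by Proposition \ref{p.conjugate}, for every $g\in G$ we have $gH_ig^{-1}=H_{g(i)}$, so conjugation by $g$ permutes the generators $H_1,\dots,H_d$ of $\sH$; hence $\sH\lhd G$. Set $K:=\sH G_1$, a subgroup with $G_1\subset K\subset G$, and identify the $G$-set $G/K$ with the set $X/\sH$ of $\sH$-orbits on $X\cong G/G_1$. The hypothesis that $\sH$ is not transitive on $X$ is precisely the statement $[G:K]\geq 2$.

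The subgroup $\alpha^{-1}(K)\subset\pi_1(M^{\rm et},x)$ then determines a connected \'etale cover $\nu:N^{\rm et}\to M^{\rm et}$ of degree $[G:K]$, through which $\mu|_{\sU^{\rm et}}:\sU^{\rm et}\to M^{\rm et}$ factors. I take $M'$ to be a projective resolution of the normalization of $M$ inside the function field $\C(N^{\rm et})$; the induced morphism $f:M'\to M$ is projective, generically finite of degree $[G:K]\geq 2$, hence not birational, and the restriction of $f$ over $M^{\rm et}$ coincides with $\nu$. To verify splitting, I would choose a general $a\in\sK$, set $B:=\mu(\rho^{-1}(a))$, and pick a basepoint $y\in B\cap M^{\rm et}$. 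Arguing as in the proof of Proposition \ref{p.conjugate}, the local differentiable triviality of the family of members supplied by Proposition \ref{p.sC}, applied along a path from $\rho(x_1)$ to $a$, shows that the image of $\pi_1(B\cap M^{\rm et},y)$ in $G$ is a $G$-conjugate of $H_1$; such a conjugate equals some $H_j$ by Proposition \ref{p.conjugate} and is in particular contained in $\sH$. Since $\sH\subset K$ is normal in $G$, $\sH$ fixes every coset in $G/K$, so the restriction of $\nu$ over $B\cap M^{\rm et}$ has trivial monodromy and decomposes as a disjoint union of $[G:K]$ copies of $B\cap M^{\rm et}$. Each copy is Zariski open in a unique irreducible component of $f^{-1}(B)$ dominating $B$, and these components exhaust the dominating components; each has generic degree $1$ over $B$, so $f$ restricted to it is birational, which is the splitting demanded by Definition \ref{d.split}.

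The step I expect to be the main obstacle is the monodromy transport from the members $A^i_x$ through the distinguished point $x$ to an arbitrary deformation $B$ that need not contain $x$. Proposition \ref{p.conjugate} directly provides the conjugacy $H_i=g_iH_1g_i^{-1}$ only for the finitely many members passing through $x$, so one must invoke the local differentiable triviality of the family of members along a chosen path and change basepoint carefully, in the spirit of the homotopy argument used in the proof of Proposition \ref{p.conjugate}. Once this is in place, the normality of $\sH$ forces its action on $G/K$ to be trivial, and the splitting of $f$ over a general member follows by purely covering-theoretic considerations.
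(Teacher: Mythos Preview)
Your argument is correct and produces the same intermediate cover as the paper, but the packaging differs. The paper does not invoke the Galois correspondence; instead it defines directly on $\sU_o$ a relation $u\sim v$ (for $\mu(u)=\mu(v)$) whenever some irreducible component of $\mu^{-1}(\mu(\rho^{-1}(\rho(w)))\cap M^{\rm et})$ contains both $u$ and $v$ for some $w$ in the common fibre, and lets $\approx$ be the equivalence relation this generates. The quotient gives an \'etale factorisation $\sU_1\to M'_1\to M_1$, and the paper checks that on the fibre $X$ the $\approx$-classes are exactly the $\sH$-orbits. Since $X/\sH\cong G/(\sH G_1)=G/K$, this is the same cover you obtain from $\alpha^{-1}(K)$.

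The trade-off appears in the splitting step. In the paper's construction the splitting over a general member $B=\mu(\rho^{-1}(a))$ is immediate from the definition of $\sim$: any two points of a single component of $\mu^{-1}(B)$ over the same point of $B$ are $\sim$-related (take $w$ with $\rho(w)=a$), so each component collapses to a single sheet in $M'_1$, forcing every dominating component of $f^{-1}(B)$ to be birational over $B$. No transport along $\sK$ is needed, and the basepoint $x$ plays no role in this part. In your approach you must instead run the homotopy argument of Proposition~\ref{p.conjugate} along a path in $\sK\setminus\sC$ to see that the monodromy of $\nu$ along $B$ lands in a $G$-conjugate of $H_1$, hence in $\sH$; this is correct but is exactly the extra work you flagged. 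On the other hand, your formulation makes the normality $\sH\lhd G$ and the bound $\deg f=[G:K]\ge 2$ explicit, whereas in the paper both are only implicit in the identification of the $\approx$-classes with $\sH$-orbits.
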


\begin{proof} Put $\sU_o :=
\mu^{-1}(M_o),$ where $M_o$ is as in Definition \ref{n.group}.
Given two point $u, v \in \sU_o$ with $\mu(u) = \mu(v)$, write $u
\sim v$ if the following holds: there exist a point $w \in \sU_o$
with $\mu(w) = \mu(u)=\mu(v)$ and an irreducible component of
$\mu^{-1}(\mu(\rho^{-1}(\rho(w)))\cap M^{\rm et})$ containing both
$u$ and $v$. Now let $\approx$ be the equivalence relation on
$\sU_o$ generated by $\sim$. In other words, two points $u$ and
$v$ are equivalent, $u \approx v$, if there exists an integer
$\ell \geq 1$  and a sequence of points in $\sU_o$
$$u=u_1, u_2, \ldots, u_{\ell-1}, u_{\ell} = v $$
such that $u_i \sim u_{i+1} $ for each $1 \leq i \leq \ell -1$.
Over a Zariski open subset $M_1 \subset M_o$, this gives an
\'etale equivalence relation, i.e., the equivalence classes on $\sU_1:=
\mu^{-1}(M_1)$ determine an \'etale factorization $\sU_1 \to M'_1
\to M_1$ of $\mu|_{\sU_1}$. From the definition of the equivalence
relation, the \'etale morphism $M'_1 \to M_1$ splits over
$\mu(\rho^{-1}(a)) \cap M_o$ for a general $a \in \sK$.

We claim that $M'_1 \to M_1$ is not bijective  if $\sH$ does not
act transitively on $X= \mu^{-1}(x), x \in M_1$. To see the claim,
note that for two point $x_i,x_j \in X$, $x_i \sim x_j$  if and
only if $A^k_{x_i} = A^k_{x_j}$ for some $k \in I=\{ 1, \ldots,
d\}$, using Notation \ref{n.A}. The latter is equivalent to  $x_j
= H_k \cdot x_i$ for some $k$. Thus the equivalence classes of
$\approx$ in $X$ are just the orbits of the group $\sH$. This proves
the claim.

Since the equivalence relation $\approx$ on $\sU_1$ is an
algebraic equivalence relation, we have a generically
finite morphism $f:M' \to M$ compactifying $M'_1 \to M_1$ and a
dominant rational map $q: \sU \dashrightarrow M'$ satisfying $\mu
= f \circ q$. By the assumption that the group $\sH$ does not act
transitively on $X$ and the previous claim, we see that $f$ is not
birational. By the definition of $M'$, we know that $f$ splits
over $\mu(\rho^{-1}(a))$ for a general $a \in \sK$.
\end{proof}

\begin{proposition}\label{p.split}
Let $\sW= (\mu: \sU \to M, \rho: \sU \to \sK)$ be a web of
submanifolds on a projective manifold $M$.   Let $f: M' \to M$ be
a generically finite surjective morphism.  Assume that $f$ splits
over $\mu(\rho^{-1}(a))$ for a general $a \in \sK$. Then
 $\mu(\rho^{-1}(a))$ is disjoint from the reduced branch divisor $D \subset M$
 of the morphism $f$.
\end{proposition}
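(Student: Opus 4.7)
The plan is to argue by contradiction, using a local monodromy computation near a hypothetical intersection point of $A_a$ with $D$. The splitting hypothesis should force triviality of the monodromy of $f$ along a tubular neighborhood of $A_a$, which is incompatible with the local Galois action near the branch divisor.

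First I would reduce to the case that $f$ is finite by passing to its Stein factorization; this preserves the reduced branch divisor $D$, since the exceptional locus of the birational part contracts to codimension $\geq 2$. Then I would fix $a \in \sK$ general enough that $f$ splits over $A_a := \mu(\rho^{-1}(a))$, that $a \in \sK^{\rm bihol}$, and that $a$ lies in the Zariski-open subset supplied by Lemma \ref{l.Ehresman} applied with $\sE := \mu^{-1}(D)$. Definition \ref{d.web}(ii) then gives a contractible neighborhood $O_a \subset \sK$ of $a$ such that $\mu$ restricts to a biholomorphism $\rho^{-1}(O_a) \to V_a$, where $V_a := \mu(\rho^{-1}(O_a)) \subset M$ is open. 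After shrinking $O_a$, the lemma provides a diffeomorphism $V_a \setminus D \cong (A_a \setminus E_a) \times O_a$ with $E_a := A_a \cap D$; since $O_a$ is contractible, the slice inclusion $A_b \setminus E_b \hookrightarrow V_a \setminus D$ is a homotopy equivalence for every $b \in O_a$.

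Next I would establish triviality of the monodromy representation $\chi: \pi_1(V_a \setminus D) \to \mathfrak{S}_n$ of the finite \'etale cover $f^{-1}(V_a \setminus D) \to V_a \setminus D$, where $n = \deg f$. Pick $b \in O_a$ general so that $f$ splits over $A_b$. The hypothesis yields $n$ irreducible components of $f^{-1}(A_b)$, each finite birational onto the smooth variety $A_b$ and hence --- by Zariski's main theorem --- isomorphic to $A_b$. Over the \'etale locus $A_b \setminus E_b$, these $n$ components furnish $n$ pairwise disjoint sections of $f$, because each fiber has $n$ distinct preimages. The restricted \'etale cover is therefore trivial, so its monodromy vanishes; the homotopy equivalence of slices then transfers this triviality to $\chi$.

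Finally, I would derive a contradiction from the assumption $V_a \cap D \neq \emptyset$ by producing a nontrivial element of the image of $\chi$. Pick a smooth point $p$ of some irreducible component of $V_a \cap D$; since $p \in D$, there is a preimage $q \in f^{-1}(p)$ lying in the ramification locus of $f$, with local ramification index $e \geq 2$ transverse to $D$. A small loop $\gamma$ around $p$ in $V_a \setminus D$ then has $\chi([\gamma])$ acting as a nontrivial $e$-cycle on the $e$ preimages collapsing to $q$, contradicting triviality. Hence $V_a \cap D = \emptyset$ and, a fortiori, $A_a \cap D = \emptyset$. The main technical obstacle is the passage from a single leaf's splitting to triviality of the monodromy on the whole foliated tube $V_a \setminus D$; this rests on the diff-triviality provided by Lemma \ref{l.Ehresman} together with the biholomorphism supplied by Definition \ref{d.web}(ii).
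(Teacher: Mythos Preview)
Your argument is correct and rests on the same idea as the paper's: the splitting hypothesis forces the cover $f$ to be unramified along $A_a$, which is incompatible with $A_a$ meeting the branch locus. The paper's implementation is more direct and avoids both Stein factorization and Lemma~\ref{l.Ehresman}: it picks a general point $y \in A_a \cap D$ at which $D \cap A_a$ is a smooth divisor in $A_a$, chooses an irreducible curve $C \subset A_a$ transverse to $D$ at $y$, and observes that the component of $f^{-1}(C)$ through a ramification point over $y$ is a genuinely ramified cover of $C$; hence the irreducible component of $f^{-1}(A_a)$ containing it cannot be birational onto $A_a$, contradicting the splitting. Your packaging via the monodromy representation and Zariski's main theorem is a clean reformulation of this, but the passage to the tubular neighborhood $V_a$ that you flag as ``the main technical obstacle'' is in fact unnecessary: since $a$ itself may be taken general, $f$ already splits over $A_a$, and the trivial-monodromy-versus-local-cycle contradiction can be run directly on $A_a \setminus E_a$ once one knows (as the paper arranges) that $A_a$ meets $D$ transversally at some point. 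What your detour through $V_a$ buys is precisely that the small loop lives in an open subset of $M$ rather than in $A_a$, so you sidestep this transversality check at the cost of invoking Lemma~\ref{l.Ehresman}.
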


\begin{proof}
Suppose that $\mu(\rho^{-1}(a))$ has non-empty intersection with
$D$. By the generality of $a \in \sK$, we can assume that
$\mu(\rho^{-1}(a))$ passes through a general point $y$ of an
irreducible component of $D$ and the divisor $D':= D \cap
\mu(\rho^{-1}(a))$ on $\mu(\rho^{-1}(a))$ is  smooth at $y$. Pick
a ramification point $z \in M'$ with $f(z) =y$. Then near $z$, the
morphism is locally analytically equivalent to a cyclic branched
covering of degree $\geq 2$. Pick an irreducible curve $C \subset
\mu(\rho^{-1}(a))$ such that $C$ intersects $D'$ transversally at
$y$. Then $f^{-1}(C)$ has an irreducible component $C'$ through
$z$ such that $C' \to C$ is locally a cyclic branched covering near
$z$ of degree $\geq 2$. The irreducible component of
$f^{-1}(\mu(\rho^{-1}(a)))$ containing $z$ cannot be birational
over $\mu(\rho^{-1}(a))$ because it must contain $C'$ and we can
choose $C$ to pass through any general point of
$\mu(\rho^{-1}(a))$. This contradicts the assumption that $f$
splits over $\mu(\rho^{-1}(a))$. \end{proof}

\section{Pairwise integrable webs of submanifolds}

The term `web' in the previous section has its origin in `web
geometry' in  differential geometry. In this section, we need to
view a web from this original viewpoint of local differential
geometry.  To be precise,  we  introduce the following definition.

\begin{definition}\label{d.regular} Let $U$ be a complex manifold.
A {\em regular web} on $U$ is a finite number of integrable
subbundles $$W^i \subset T(U), \; i \in I := \{1, 2, \ldots, d\}$$
for some integer $d \geq 1$ such that for any pair $(i,j) \in   I\times I$, the intersection
$W^i\cap W^j \subset T(U)$ is also a subbundle. This implies that
the sum $W^{ij} = W^{ji} \subset T(U)$ with fiber at $x \in U$
$$W^{ij}_x := W^i_x + W^j_x \subset T_x(U)$$ is a subbundle of
rank $\rk(W^i) + \rk(W^j) - \rk(W^i\cap W^j)$. A regular web is
{\em pairwise integrable} if for any pair $(i,j)$, $W^{ij} =
W^{ji}$  is integrable. \end{definition}

\begin{remark} We are interested in local differential geometry of a regular web. So we will
 assume that all leaves of integrable
distributions are closed in the complex manifold $U$.
\end{remark}

The following three lemmata are immediate.

\begin{lemma}\label{l.generic} Let $U$ be a complex manifold and let  $\{ W^i \subset T(U), i \in
I\}$ be an arbitrary finite collection of integrable subbundles of
$T(U)$. Then there exists a nonempty Zariski open subset
$U'\subset U$ such that the restriction $\{W^i|_{U'}, i \in I\}$
defines a regular web on $U$. \end{lemma}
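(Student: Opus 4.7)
The plan is to show that the only genuine obstruction to being a regular web is the jumping of fiber dimensions of the pairwise intersections, and that all such jumps occur only on a proper analytic subset.

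First I would fix a pair $(i,j) \in I \times I$ and consider the morphism of vector bundles
\[
\phi_{ij}: W^i \oplus W^j \longrightarrow T(U), \qquad (v,w) \longmapsto v - w.
\]
At each point $x \in U$ the rank of $\phi_{ij}$ equals $\dim(W^i_x + W^j_x)$, and dually the dimension of its kernel equals $\dim(W^i_x \cap W^j_x)$. Since the rank of a morphism of vector bundles is lower semi-continuous, the locus $U_{ij} \subset U$ where $\phi_{ij}$ attains its generic (i.e. maximal) rank $r_{ij}$ is the complement of a proper analytic subset, hence Zariski open and dense in $U$.

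On $U_{ij}$ the function $x \mapsto \dim(W^i_x + W^j_x)$ is constant, so the image of $\phi_{ij}|_{U_{ij}}$ is a coherent subsheaf of $T(U)|_{U_{ij}}$ of locally constant rank, and therefore a vector subbundle; this subbundle is precisely $W^i + W^j$. Correspondingly, $\dim(W^i_x \cap W^j_x) = \rk(W^i) + \rk(W^j) - r_{ij}$ is also constant on $U_{ij}$, so $W^i \cap W^j$ is likewise a subbundle there. This is exactly the condition required in Definition \ref{d.regular}.

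Finally I would set
\[
U' := \bigcap_{(i,j) \in I \times I} U_{ij}.
\]
Because $I$ is finite, $U'$ is a nonempty Zariski open subset of $U$. On $U'$ each $W^i|_{U'}$ is still integrable (restriction preserves integrability), and every pairwise intersection $W^i|_{U'} \cap W^j|_{U'}$ is a subbundle, so $\{W^i|_{U'}, i \in I\}$ is a regular web. The only subtlety worth flagging in the write-up is the standard fact that a coherent subsheaf of a locally free sheaf having locally constant fiber dimension is itself locally free; beyond that, the argument is a direct application of semi-continuity of rank.
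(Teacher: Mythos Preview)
Your argument is correct and is precisely the standard justification the paper has in mind; the paper itself declares the lemma ``immediate'' and omits any proof. The semi-continuity of the rank of $\phi_{ij}$ (equivalently, the vanishing loci of minors cutting out the jump loci as analytic subsets) is exactly the mechanism that produces the dense Zariski open $U_{ij}$, and intersecting over the finitely many pairs is the only remaining step.
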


\begin{lemma}\label{l.regular} Let $\{W^i, i \in I\}$ be a regular
web on a complex manifold $U$. Given a connected open subset $U'
\subset U$, the restriction $\{W^i|_{U'}, i \in I \}$
is a regular web on $U'$. If $\{W^i|_{U'}, i \in I \}$ is pairwise integrable
for some nonempty $U' \subset U$, then $\{W^i, i \in I\}$ is also pairwise
integrable.
\end{lemma}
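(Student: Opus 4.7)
The plan is to handle the two assertions of the lemma separately. The first is a direct unpacking of Definition \ref{d.regular}; the second reduces, via Frobenius, to an application of the identity theorem for holomorphic sections.

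For the first assertion, I would check each clause of Definition \ref{d.regular} against the family $\{W^i|_{U'}, i \in I\}$ in turn. Restriction to an open subset $U' \subset U$ preserves the property of being a holomorphic subbundle of the tangent bundle, preserves integrability (the Frobenius involutivity condition is pointwise and hence local), and commutes with intersection: $(W^i \cap W^j)|_{U'} = W^i|_{U'} \cap W^j|_{U'}$ remains a subbundle of $T(U')$ of the correct rank. Thus $\{W^i|_{U'}, i \in I\}$ satisfies Definition \ref{d.regular}. No obstacle arises here.

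For the second assertion, fix a pair $(i,j) \in I \times I$. The key observation is that $W^{ij}$ is already a holomorphic subbundle of $T(U)$ of constant rank $\rk(W^i)+\rk(W^j)-\rk(W^i\cap W^j)$ throughout $U$, because $\{W^i, i \in I\}$ is a regular web on all of $U$ by hypothesis. Consequently the quotient $T(U)/W^{ij}$ is a holomorphic vector bundle on $U$, and the Lie bracket descends to a globally defined, holomorphic, $\sO$-bilinear alternating morphism
$$\Phi : W^{ij} \wedge W^{ij} \longrightarrow T(U)/W^{ij}, \qquad \Phi(X\wedge Y) := [X,Y] \bmod W^{ij}.$$
By Frobenius' theorem, integrability of $W^{ij}$ on $U$ is equivalent to $\Phi \equiv 0$ on $U$, while the pairwise integrability hypothesis on $U'$ says $\Phi|_{U'} \equiv 0$.

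The remaining step, which is the only substantive one, is to propagate $\Phi \equiv 0$ from $U'$ to all of $U$. Since $\Phi$ is a holomorphic section of a holomorphic vector bundle on the connected complex manifold $U$ (manifolds are connected by the paper's convention), its vanishing on the nonempty open subset $U'$ forces it to vanish identically on $U$ by the identity theorem. Hence $W^{ij}$ is integrable on $U$ for every pair $(i,j) \in I \times I$, which is pairwise integrability. I do not expect any real obstacle here: this last step is precisely the point where the holomorphic (rather than merely $C^\infty$) setting is essential, and it reduces to a direct appeal to analytic continuation.
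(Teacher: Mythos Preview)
Your proposal is correct. The paper does not prove this lemma at all---it simply declares it ``immediate'' along with the two neighboring lemmata---so your argument via the Frobenius obstruction tensor $\Phi$ and analytic continuation is exactly the kind of routine verification the authors had in mind and chose to omit.
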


\begin{lemma}\label{l.leaf} Given a regular web $\{W^i, i \in I\}$
 on a complex manifold $U$ and a point $x\in U$,
denote by $L^i_x$ the leaf of $W^i$ through $x$. Then $L^i_y = L^i_x$ for any $y \in L^i_x$.
 When $W^{ij}$ is integrable, denote by $L^{ij}_x$ the leaf of $W^{ij}$ through $x$.
 Then $L^i_x, L^j_x \subset L^{ij}_x$ and for any $y \in L^{ij}_x$,  $$L^{ij}_x = L^{ji}_x = L^{ji}_y = L^{ij}_y.$$ \end{lemma}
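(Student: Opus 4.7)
The plan is to verify each assertion by invoking only the basic partition property of the leaves of an integrable distribution and the inclusion of subbundles $W^i \subset W^{ij}$.

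First I would address the claim that $L^i_y = L^i_x$ for any $y \in L^i_x$. Since $W^i$ is integrable, the leaves of $W^i$ form a partition of $U$: through every point there passes a unique maximal connected integral submanifold of $W^i$. As $L^i_x$ is a connected integral submanifold of $W^i$ containing the point $y$, and $L^i_y$ is by definition the \emph{maximal} such, one has $L^i_x \subset L^i_y$. Symmetrically $L^i_y \subset L^i_x$, since $L^i_y$ is a connected integral submanifold through $x$, and the equality follows. This same argument, applied now to the integrable distribution $W^{ij}$ (which we are assuming to be integrable), yields $L^{ij}_x = L^{ij}_y$ for every $y \in L^{ij}_x$.

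Next I would show $L^i_x \subset L^{ij}_x$ and $L^j_x \subset L^{ij}_x$. By Definition \ref{d.regular} we have $W^i \subset W^{ij}$. Hence every tangent space of the integral submanifold $L^i_x$ lies in $W^{ij}$, so $L^i_x$ is a connected integral submanifold of $W^{ij}$ passing through $x$; by maximality of $L^{ij}_x$ we conclude $L^i_x \subset L^{ij}_x$. The inclusion $L^j_x \subset L^{ij}_x$ is identical.

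Finally, the equality $L^{ij}_x = L^{ji}_x$ is immediate from $W^{ij} = W^{ji}$ (the symbol refers to the same subbundle, hence the same leaf through $x$), and combining this with the first paragraph gives $L^{ji}_x = L^{ji}_y = L^{ij}_y$ for every $y \in L^{ij}_x$. There is no real obstacle here; the only point one must be careful about is to use the assumed integrability of $W^{ij}$ before speaking of its leaves, and to invoke maximality in the correct direction when comparing two integral submanifolds through the same point.
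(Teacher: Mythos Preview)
Your proof is correct and matches the paper's treatment: the paper declares this lemma (together with the two preceding ones) to be ``immediate'' and gives no separate proof, and your argument simply spells out the standard partition property of leaves of an integrable distribution together with the inclusion $W^i \subset W^{ij}$, which is exactly what ``immediate'' means here.
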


\begin{proposition}\label{p.local}
When $W^{ij}$ is integrable in Lemma \ref{l.leaf},  the
germ of $L^{ij}_y$ at $y \in U$ is equal to that of $\cup_{z \in L^i_y}
L^j_z$ and also that of $\cup_{z \in L^j_y} L^i_z.$
\end{proposition}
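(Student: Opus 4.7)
The plan is to prove the first equality of germs at $y$; the second then follows by exchanging the roles of $i$ and $j$. I work in a small connected neighborhood $V$ of $y$ in $U$, to be shrunk as needed.

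The inclusion $\bigcup_{z \in L^i_y} L^j_z \subset L^{ij}_y$ is immediate from Lemma \ref{l.leaf}: any $z \in L^i_y$ lies in $L^{ij}_y$, so $L^{ij}_z = L^{ij}_y$, and hence $L^j_z \subset L^{ij}_z = L^{ij}_y$. This actually holds globally, not merely as germs.

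For the reverse inclusion of germs, I restrict attention to the leaf $L^{ij}_y$, on which $W^i$ and $W^j$ are integrable subbundles with $W^i_y + W^j_y = W^{ij}_y = T_y L^{ij}_y$. Applying the Frobenius theorem to $W^j|_{L^{ij}_y}$ I obtain, after shrinking $V$ so that every leaf of $W^j$ meets $V$ in a single plaque, a holomorphic submersion $\pi \colon V \cap L^{ij}_y \to T$ onto a manifold $T$ of dimension $\rk W^{ij} - \rk W^j$ whose fibres are exactly the $W^j$-plaques in $V$. With this choice,
$$\bigcup_{z \in L^i_y \cap V} \bigl(L^j_z \cap V\bigr) \;=\; \pi^{-1}\bigl(\pi(L^i_y \cap V)\bigr),$$
so it suffices to show that $\pi(L^i_y \cap V)$ contains a neighborhood of $\pi(y)$ in $T$.

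I would finish with a dimension count at $y$. The kernel of $d\pi_y \colon W^{ij}_y \to T_{\pi(y)}T$ is $W^j_y$, so the kernel of its restriction to $W^i_y$ is $W^i_y \cap W^j_y$, while
$$\dim T \;=\; \rk W^{ij} - \rk W^j \;=\; \rk W^i - \rk(W^i \cap W^j)$$
by the rank formula in Definition \ref{d.regular}. Thus $d\pi_y|_{W^i_y}$ is surjective, $\pi|_{L^i_y}$ is a submersion at $y$, and its image contains a neighborhood of $\pi(y)$ by the submersion theorem. The only delicate point in the plan is arranging the Frobenius data so that the two germs coincide set-theoretically and not merely up to lower-dimensional slippage; once $V$ is small enough that the $W^j$-plaques really are the fibres of a single submersion, the argument reduces to the linear-algebra surjectivity at $y$ combined with openness of submersions.
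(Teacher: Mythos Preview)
Your proof is correct and follows essentially the same strategy as the paper: establish the inclusion $\bigcup_{z\in L^i_y} L^j_z \subset L^{ij}_y$ via Lemma~\ref{l.leaf}, straighten one of the two foliations by Frobenius to obtain a submersion, and finish with the rank identity $\rk W^{ij} = \rk W^i + \rk W^j - \rk(W^i\cap W^j)$ from Definition~\ref{d.regular}. The only cosmetic difference is that the paper straightens $W^i$ in the ambient $U$ (projecting $L^j_y$ and reading off the dimension of the preimage), whereas you straighten $W^j$ inside the leaf $L^{ij}_y$ and argue via openness of the submersion $\pi|_{L^i_y}$; the linear algebra at $y$ is identical in both versions.
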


\begin{proof}
 Set $\rk(W^i) = r$
and $\dim U = n$. Let $\Delta^r \times \Delta^{n-r}$ be the
product of polydiscs of dimension $r$ and $n-r$, respectively.  Let $p:
\Delta^r \times \Delta^{n-r} \to \Delta^{n-r}$ be the projection.
There exists a neighborhood $U_y$ of $y$ biholomorphic to
$\Delta^r \times \Delta^{n-r}$ such that $L^i_x = p^{-1}(p(x))$
when we identify $U_y$ with the product of polydiscs. By the
requirement that $W^{ij}$ is a vector subbundle of $T(U)$,
$p|_{L^j_y \cap U_y}$ is a smooth morphism and by shrinking $U_y$
if necessary, we can assume that $p(L^j_y \cap U_y)$ is a
submanifold of dimension $\rk(W^j) - \rk(W^i \cap W^j)$ in
$\Delta^{n-r}$. The germ of $\cup_{z \in L^i_y} L^j_z$ at $y$ is
equal to that of the submanifold $p^{-1}(p(L^j_y \cap U_y))$ and has dimension
 $$ r + \dim (p(L^j_y \cap U_y)) =  r + \rk(W^j) - \rk(W^i \cap W^j)= \dim (L_y^{ij}).$$
Because
 $ L^i_y \subset L^{ij}_y$ and
for each $z \in L^i_y$, $L^j_z \subset L^{ij}_z = L^{ij}_y$ by
Lemma \ref{l.leaf}, we have
$$\bigcup_{z \in L^i_y} L^j_z \subset L^{ij}_y.$$
 Since the germs of both sides at $y$ are
smooth and both sides have the same dimension, their germs at $y$
coincide.
\end{proof}

\begin{definition}\label{d.good}
Given a web $\sW= [\mu: \sU \to M, \rho: \sU \to \sK]$ of
submanifolds on a projective manifold, let $M_o\subset M$ be the
Zariski open subset in Definition \ref{n.group} and let $\sU_o :=
\mu^{-1}(M_o)$. Denote by $d \geq 1$  the degree of $\mu$. For
each point $x \in M_o,$ there exists a neighborhood $U \subset
M_o$ of $x$ such that
$$\rho^{-1}(U) = U^1 \cup U^2 \cup \cdots \cup U^d$$ is a disjoint
union of connected open subsets $U^i \subset \sU$ and each $\mu^i:= \mu|_{U^i}$ is a
biholomorphic morphism from $U^i$ onto $U$. Let $T^{\rho}(U^i)
\subset T(U^i)$ be the vector subbundle given by the relative
tangent bundle of the smooth morphism $\rho|_{\sU_o}$.  Let $W^i=
d \mu^i (T^{\rho}(U^i))$ be the corresponding vector subbundle of
$T(U)$. We say that $x \in M_o$ is {\em a good point} with respect
to the web $\sW$ if $\{ W^1, \ldots, W^d \}$  defines a regular
web (in the sense of Definition \ref{d.regular}) in a neighborhood
of $x$. Let $M^{\rm good} \subset M_o$ be the subset
consisting of good points. By Lemma \ref{l.generic}, $M^{\rm
good}$ is a nonempty Zariski open subset in $M$. For each point $x
\in M^{\rm good}$, the regular web $\{W^1, \ldots, W^d\}$ in
a neighborhood of $x$ is called the {\em induced regular web} (at
$x$). The web $\sW$ of submanifolds is {\em pairwise integrable}
if the induced regular web at some (hence any by Lemma
\ref{l.regular}) point $x\in M^{\rm good}$ is pairwise integrable
in the sense of Definition \ref{d.regular}.
\end{definition}

\begin{notation}\label{n.pair}
In the setting of Definition \ref{d.good}, for each pair $(i,j)
\in I \times I$, recall from Notation \ref{n.A} that  $A^i_{y_j} \subset \sU$ is  the
irreducible component of $\mu^{-1}(A^i_y)$ passing through $y_j$. In particular, $A^i_{y_i} = \rho^{-1}(\rho(y_i))$.
We set $$K^i_{y_j} := \rho(A^i_{y_j}), \; A^{ij}_{y_j}:= \mbox{
closure of } \rho^{-1}(K^i_{y_j} \cap \sK^{\rm bihol})  \mbox{ and
} A^{ij}_y := \mu(A^{ij}_{y_j}).$$ All of these are irreducible subvarieties.  \end{notation}

\begin{proposition}\label{p.A^ij}
In the setting of Definition \ref{d.good}, choose a neighborhood
$x \in U \subset M^{\rm good}$ and let $\{W^1,\ldots,W^d\}$ be the regular web on $U$ obtained from $T^\rho(U^i).$
Using notation of Lemma \ref{l.leaf} and Notation
\ref{n.pair}, we have the
following for any $y \in U$.
\begin{enumerate} \item[(i)]  $L^i_y$ is the connected component of $A^i_y \cap U$ through
$y$.
\item[(ii)] If $\sW$ is pairwise integrable, then $L^{ij}_y$ is an irreducible component
of $A^{ij}_y \cap U$.\end{enumerate}\end{proposition}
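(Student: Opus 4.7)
The plan is to prove (i) by unwinding the definition of $W^i$ as a pushforward of the relative tangent bundle of $\rho$, and to prove (ii) by combining Proposition~\ref{p.local} with a dimension count inside the neighborhood $U$ of a good point, transporting information from each sheet $U^j$ through the local biholomorphism $\mu^j\colon U^j \to U$.

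For (i), by construction $W^i = d\mu^i(T^\rho(U^i))$, so the leaf $L^i_y$ is the $\mu^i$-image of the connected component of $\rho^{-1}(\rho(y_i)) \cap U^i$ through $y_i$. This embeds $L^i_y$ into $A^i_y \cap U$; since $\rho(y_i) \in \sK^{\rm bihol}$ the submanifold $A^i_y$ is smooth at $y$ of dimension $\rk(W^i)$, matching the dimension of the leaf. As $L^i_y$ is closed in $U$ by the standing assumption on leaves, it is both open and closed in $A^i_y \cap U$, hence equals the connected component through $y$.

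For (ii), I would first establish the germ-level inclusion $L^{ij}_y \subset A^{ij}_y$ at $y$. Proposition~\ref{p.local} identifies the germ of $L^{ij}_y$ at $y$ with that of $\bigcup_{z \in L^i_y} L^j_z$. For $z \in L^i_y$ set $z_j := (\mu^j)^{-1}(z)$; then $(\mu^j)^{-1}(L^i_y)$ is a connected subset of $\mu^{-1}(A^i_y) \cap U^j$ through $y_j$, hence lies in the irreducible component $A^i_{y_j}$, so $\rho(z_j) \in K^i_{y_j} \cap \sK^{\rm bihol}$. Consequently $A^j_z = \mu(\rho^{-1}(\rho(z_j))) \subset \mu(A^{ij}_{y_j}) = A^{ij}_y$, and by (i) we get $L^j_z \subset A^j_z \cap U \subset A^{ij}_y$. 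The promotion to an irreducible component then reduces to a dimension match: writing $r = \dim\sU - \dim\sK$, the closure definition together with generic finiteness of $\mu$ gives $\dim A^{ij}_y = \dim K^i_{y_j} + r$, while $\dim L^{ij}_y = \rk(W^{ij}) = 2r - \rk(W^i \cap W^j)$. To bound $\dim K^i_{y_j} \leq r - \rk(W^i \cap W^j)$, I would analyze $\rho|_{A^i_{y_j}}$ near $y_j$: its source is locally the smooth manifold $(\mu^j)^{-1}(L^i_y)$ of dimension $r$, and at any smooth fiber point the tangent space to the fiber transports under $d\mu^j$ into $W^i \cap W^j$, so generic fibers have dimension at least $\rk(W^i \cap W^j)$. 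The image-fiber inequality then yields the bound, forcing $\dim L^{ij}_y = \dim A^{ij}_y$.

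Because $L^{ij}_y$ is a smooth connected closed subvariety of $U$ sitting inside the pure-dimensional analytic set $A^{ij}_y \cap U$ with the same dimension, it must coincide with an irreducible component of $A^{ij}_y \cap U$, completing (ii). The main obstacle I anticipate is the dimension estimate for $K^i_{y_j}$: it requires the careful transport of the intersection $W^i \cap W^j$ back to $U^j$ through the biholomorphism $\mu^j$, and is the one step where the regular-web structure (constant rank of $W^i \cap W^j$) and the interaction of $\mu$ with $\rho$ both come into play. The remaining steps are essentially a definition chase, once the local trivialization picture of Definition~\ref{d.good} is fixed.
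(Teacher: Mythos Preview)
Your proposal is correct and follows the same route as the paper: part (i) is immediate from the definition of $W^i$, and part (ii) comes from combining (i) with Proposition~\ref{p.local}. The paper's own proof is a single sentence for each part, so what you have done is supply the details the authors left implicit---in particular the inclusion $\bigcup_{z\in L^i_y}L^j_z \subset A^{ij}_y$ via the sheet $U^j$ and the dimension estimate $\dim K^i_{y_j} \le r - \rk(W^i\cap W^j)$, which together force $L^{ij}_y$ to be a full irreducible component rather than a proper subvariety.
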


    \begin{proof} (i) is immediate from the definition of $W^i$.
    (ii) is a consequence of (i) and Proposition \ref{p.local}.
\end{proof}

\begin{proposition}\label{p.leaf}
Let $\sW=[\mu: \sU \to M, \rho: \sU \to \sK]$ be a pairwise
integrable web of submanifolds on a projective manifold $M$. For a
point $x\in M^{\rm good}$, choose a neighborhood $x \in U \subset
M^{\rm good}$ equipped with the induced regular web $\{W^1, \ldots,
W^d\}$ on $U$. Using Notation \ref{n.pair}, we have the following.
\begin{enumerate}
    \item[(i)] $A^i_y, A^j_y \subset A^{ij}_y$, $\rho(A^{ij}_{y_j}) = \rho(A^i_{y_j})$ and $A^{ij}_{y_j}$
    is an irreducible component of $\mu^{-1}(A^{ij}_y)$ through $y_j$.
        \item[(ii)]  $A^{ij}_y = A^{ji}_y.$
\item[(iii)] If $y \in L^{ij}_x$, then $A^{ij}_y= A^{ij}_x$
 \item[(iv)] If $y \in L^{ij}_x$, $ A^{ij}_{y_i} = A^{ij}_{x_i}$ and  $A^{ij}_{y_j} = A^{ij}_{x_j}.$
\end{enumerate}
\end{proposition}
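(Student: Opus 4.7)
My plan is to prove the four assertions in order: (i) by unpacking definitions, (ii) and (iii) by a Zariski closure argument using irreducibility of $A^{ij}_y$, and (iv) by lifting connectedness of the leaf $L^{ij}_x$ into $\sU$ via the biholomorphisms $\mu^i : U^i \to U$. Throughout, I interpret $A^{ij}_{y_k}$ for $k \neq j$ as the irreducible component of $\mu^{-1}(A^{ij}_y)$ containing $y_k$; this extends the Notation \ref{n.pair} definition of $A^{ij}_{y_j}$ in a way consistent with part (i).

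For (i), I would first check that $A^{ij}_{y_j}$ is irreducible: $K^i_{y_j} \cap \sK^{\rm bihol}$ is nonempty (containing $\rho(y_j)$) and irreducible, and $\rho$ has smooth connected, hence irreducible, fibers over $\sK^{\rm bihol}$ by Definition \ref{d.web}, so $\rho^{-1}(K^i_{y_j} \cap \sK^{\rm bihol})$ is irreducible and so is its closure $A^{ij}_{y_j}$. The inclusion $A^i_{y_j} \subset A^{ij}_{y_j}$ is a density argument: $A^i_{y_j} \cap \rho^{-1}(\sK^{\rm bihol})$ is Zariski dense in the irreducible $A^i_{y_j}$ and sits in $\rho^{-1}(K^i_{y_j} \cap \sK^{\rm bihol})$; applying $\mu$ gives $A^i_y \subset A^{ij}_y$. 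Since $\rho(y_j) \in K^i_{y_j} \cap \sK^{\rm bihol}$, the fiber $\rho^{-1}(\rho(y_j))$ lies in $A^{ij}_{y_j}$ and pushes forward to $A^j_y$, proving $A^j_y \subset A^{ij}_y$. Properness of $\rho$ yields $\rho(A^{ij}_{y_j}) = K^i_{y_j} = \rho(A^i_{y_j})$, and a dimension count via generic finiteness of $\mu$ identifies $A^{ij}_{y_j}$ as an irreducible component of $\mu^{-1}(A^{ij}_y)$ through $y_j$.

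For (ii) and (iii), once (i) supplies irreducibility of $A^{ij}_y$, the Zariski closure in $M$ of the complex submanifold $L^{ij}_y \subset A^{ij}_y \cap U$ is an irreducible algebraic subvariety of $A^{ij}_y$ of dimension $\dim L^{ij}_y = \dim A^{ij}_y$ (Proposition \ref{p.A^ij}(ii)), and so equals $A^{ij}_y$ itself. Symmetrically the Zariski closure of $L^{ji}_y$ equals $A^{ji}_y$; combining with $L^{ij}_y = L^{ji}_y$ from Lemma \ref{l.leaf} proves (ii). For (iii), when $y \in L^{ij}_x$ the same lemma gives $L^{ij}_y = L^{ij}_x$, and taking Zariski closures yields $A^{ij}_y = A^{ij}_x$.

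For (iv), part (iii) identifies $\mu^{-1}(A^{ij}_y)$ with $\mu^{-1}(A^{ij}_x)$. The biholomorphism $\mu^j : U^j \to U$ pulls $L^{ij}_x$ back to a connected complex submanifold $(\mu^j)^{-1}(L^{ij}_x) \subset U^j \subset \mu^{-1}(A^{ij}_x)$ containing both $x_j$ and $y_j$, so these points lie in the same irreducible component, forcing $A^{ij}_{y_j} = A^{ij}_{x_j}$; the analogous argument with $\mu^i$ yields $A^{ij}_{y_i} = A^{ij}_{x_i}$. The main subtlety throughout is keeping the asymmetric roles of the indices in Notation \ref{n.pair} straight: the upper $i$ selects the leaf $A^i_{y_j}$ while the lower $j$ only designates a sheet, and part (ii) is exactly the statement that this asymmetry disappears after applying $\mu$.
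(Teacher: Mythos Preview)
Your proposal is correct and follows essentially the same route as the paper's proof, which is extremely terse: the paper simply says (i) is immediate from Notation~\ref{n.pair}, (ii) and (iii) follow from Lemma~\ref{l.leaf} and Proposition~\ref{p.A^ij}(ii), and (iv) follows from (i) and (iii). Your Zariski-closure argument for (ii)--(iii) is exactly how one unpacks the paper's citation of Proposition~\ref{p.A^ij}(ii) together with the irreducibility of $A^{ij}_y$, and your connectedness argument via $(\mu^j)^{-1}(L^{ij}_x)$ is the natural way to supply the step the paper leaves implicit in deducing (iv) from (i) and (iii), namely that $x_j$ and $y_j$ lie in the same irreducible component of $\mu^{-1}(A^{ij}_x)$.
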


    \begin{proof}
    (i) is immediate from the definition in Notation \ref{n.pair}.
    (ii) and (iii) follow from Lemma \ref{l.leaf} and Proposition \ref{p.A^ij}(ii).
    (iv) follows from (i) and (iii). \end{proof}

    \begin{proposition}\label{p.sigma}
In the setting of Proposition \ref{p.leaf}, let $\widehat{A^{ij}_x}$
be the normalization of $A^{ij}_x$ and let
$$\sigma: \widetilde{A^{ij}_x} \to \widehat{A^{ij}_x} \to A^{ij}_x \subset M$$ be a
desingularization of $A_x^{ij}$ which leaves the smooth locus of
$\widehat{A^{ij}_x}$ intact. For any $y \in L^{ij}_x$, let  $\widetilde{A^i_y}$ and $ \widetilde{A^j_y} \subset
\widetilde{A^{ij}_x}$ be the proper transforms of $A^i_y$ and $A^j_y$, respectively.
 Then there exist   neighborhoods $\sV^i_y$  of  $\widetilde{A^i_y}$ and  $\sV^j_y$ of
 $\widetilde{A^j_y}$ in $\widetilde{A^{ij}_x}$ equipped
 with  morphisms $$\upsilon^i_y: \sV^i_y \to A^{ij}_{y_i} \cap \rho^{-1}(\sK^{\rm bihol})
  \mbox{ and } \upsilon^j_y: \sV^j_y \to A^{ij}_{y_j}\cap \rho^{-1}(\sK^{\rm bihol})$$
  such that \begin{enumerate} \item[(1)]  $\upsilon^i_y(\widetilde{A^i_y}) = A^i_{y_i}$ and
$\upsilon^j_y(\widetilde{A^j_y}) = A^j_{y_j};$ \item[(2)] the four
morphisms $$\sigma|_{\sV^i_y}: \sV^i_y \to M, \;
\sigma|_{\sV^j_y}: \sV^j_y \to M, \; \upsilon^i_y: \sV^i_y \to \sU
\mbox{ and } \upsilon^j_y: \sV^j_y \to \sU$$ are embeddings;
 \item[(3)] the four morphisms $$\sigma|_{\widetilde{A^i_y}}: \widetilde{A^i_y} \to
 A^i_y,\;
 \sigma|_{\widetilde{A^j_y}}: \widetilde{A^j_y} \to A^j_y, \; \upsilon^i_y|_{\widetilde{A^i_y}}:
 \widetilde{A^i_y} \to A^i_{y_i} \mbox{ and } \upsilon^j_y|_{\widetilde{A^j_y}}: \widetilde{A^j_y}
 \to A^j_{y_j}$$ are  biregular;
\item[(4)] $\mu \circ \upsilon^i_y = \sigma|_{\sV^i_y}$ and  $\mu
\circ \upsilon^j_y = \sigma|_{\sV^j_y},$ i.e., the following diagrams,
and those with $i$ and $j$ switched, commute.
$$
\begin{array}{ccccccccccc} & & \sV^i_y & \subset &
\widetilde{A^{ij}_x} & & & \;\;\;\;\;\; & \widetilde{A^i_y} & & \\
& & \downarrow \upsilon^i_y & & \downarrow \sigma & & & & \wr
\downarrow \upsilon^i_y & \searrow \sigma & \\ \sU & \supset &
A^{ij}_{y_i} & \stackrel{\mu}{\to} & A^{ij}_y & \subset & M & &
A^i_{y_i} & \stackrel{\mu}{\to} & A^i_y \end{array} $$
 \end{enumerate} \end{proposition}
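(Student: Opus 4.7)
The plan is to construct $\upsilon^j_y$ (and symmetrically $\upsilon^i_y$) by first producing a local holomorphic section of $\mu|_{A^{ij}_{y_j}}$ over an analytic neighborhood of $A^j_y$ in $A^{ij}_x$, and then lifting this section through the desingularization $\sigma$. The key point will be to show that $A^j_y$ lies in the smooth locus of $\widehat{A^{ij}_x}$, so that $\sigma$ restricts to an isomorphism over a neighborhood of $A^j_y$ and the lift is immediate.

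I would begin by applying Definition~\ref{d.web}(2)(ii) at $a = \rho(y_j) \in \sK^{\rm bihol}$ to obtain an open neighborhood $O \subset \sK^{\rm bihol}$ of $\rho(y_j)$ such that $\mu$ is biholomorphic on $V := \rho^{-1}(O)$. Since $V$ is an open neighborhood of the whole fiber $A^j_{y_j} = \rho^{-1}(\rho(y_j))$, one has $V \cap A^{ij}_{y_j} = \rho^{-1}(K^i_{y_j} \cap O)$, and the inverse $\eta_j := (\mu|_{V \cap A^{ij}_{y_j}})^{-1}$ defines a holomorphic morphism from the analytic open set $W_j := \mu(V \cap A^{ij}_{y_j}) \subset A^{ij}_x$ into $A^{ij}_{y_j} \cap \rho^{-1}(\sK^{\rm bihol})$. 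Here $W_j$ is an open neighborhood of $A^j_y$ in $A^{ij}_x$ and $\eta_j(A^j_y) = A^j_{y_j}$.

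To lift $\eta_j$ through $\sigma$, I would show $W_j$ lies in the smooth locus of $\widehat{A^{ij}_x}$. Because $\mu|_V$ is a biholomorphism and $\rho|_V$ is smooth, $W_j \cong \rho^{-1}(K^i_{y_j} \cap O)$ is smooth provided $K^i_{y_j}$ is smooth at $\rho(y_j)$. I would deduce this from the fact that $\rho|_{A^i_{y_j}}$ is a submersion at $y_j$: locally $A^i_{y_j}$ corresponds under $\mu^j$ to $L^i_y \subset U$ and the fiber of $\rho$ at $y_j$ to $W^j_y$, so the kernel of $d\rho$ on $T_{y_j}(A^i_{y_j})$ is identified with $W^i_y \cap W^j_y$, which by the regular web condition has constant rank $\rk(W^i \cap W^j)$, forcing $d\rho$ to have the expected rank $\dim K^i_{y_j}$. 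Once smoothness is established, $\sigma$ restricts to an isomorphism from an open neighborhood $\sV^j_y$ of $\widetilde{A^j_y}$ onto $W_j$, and I set $\upsilon^j_y := \eta_j \circ \sigma|_{\sV^j_y}$. Properties (1) and (4) are immediate from $\eta_j$ being the inverse of $\mu|_{V \cap A^{ij}_{y_j}}$, while (2) and (3) follow because $\sigma|_{\sV^j_y}$ and $\eta_j$ are both embeddings onto their images.

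The main obstacle is the smoothness of $K^i_{y_j}$ at $\rho(y_j)$. While a tangent-space calculation handles the point $y_j$ itself, one needs this smoothness to propagate along the entire fiber $A^j_{y_j}$ so that $W_j$ is smooth at every point of $A^j_y$; this reduces to the observation that $\rho(A^j_{y_j})$ is the single point $\rho(y_j)$, so smoothness of $K^i_{y_j}$ at that one base point is sufficient to yield smoothness of $A^{ij}_{y_j} \cap V$ along the whole of $A^j_{y_j}$.
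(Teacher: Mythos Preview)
Your approach is essentially the same as the paper's: build an open neighborhood of $A^j_{y_j}$ inside $A^{ij}_{y_j}$ on which $\mu$ is an embedding, push it into $A^{ij}_x$, and then lift to $\widetilde{A^{ij}_x}$ using that the image lies in the smooth locus of the normalization. The one genuine gap is your claim that $K^i_{y_j}$ is smooth at $\rho(y_j)$.

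Your tangent-space computation shows that $\rho|_{A^i_{y_j}}$ is a submersion \emph{at the point $y_j$}, hence that the analytic branch of $K^i_{y_j}$ through $\rho(y_j)$ coming from $y_j$ is smooth. It does not rule out further analytic branches of $K^i_{y_j}$ at $\rho(y_j)$ arising from other points of $A^i_{y_j}\cap\rho^{-1}(\rho(y_j)) = A^i_{y_j}\cap A^j_{y_j}$; nothing in the setup forces this intersection to be connected, and there is no reason those other preimages lie in the good locus where your rank argument applies. If such extra branches exist, then $\rho^{-1}(K^i_{y_j}\cap O)$ and hence your $W_j$ is reducible along $A^j_y$, so $W_j$ is not smooth and your lift through $\sigma$ is not defined as stated. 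Your last paragraph addresses a different concern (propagation of smoothness along the fiber $A^j_{y_j}$) and does not touch this issue.

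The paper handles exactly this point: rather than working with all of $K^i_{y_j}\cap O$, it restricts to the single smooth branch $\rho(L^i_{y_j})$ (in the paper's notation, with $i$ and $j$ switched, $\rho(L^j_{y_i})$), noting explicitly that its germ at $\rho(y_j)$ is \emph{an irreducible component} of the germ of $K^i_{y_j}$. Setting $\sV^j_{y_j}:=\rho^{-1}\big([\rho(L^i_{y_j})\cap O]\big)$ over that one branch gives a smooth open set on which $\mu$ is an embedding, and then the lift $\tilde\mu$ to $\widetilde{A^{ij}_x}$ exists by normality and the assumption that $\sigma$ leaves the smooth locus of $\widehat{A^{ij}_x}$ intact. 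Your construction becomes correct once you replace $K^i_{y_j}\cap O$ by this single branch; the rest of your argument then goes through unchanged.
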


\begin{proof}
Let $L^j_{y_i}$ be the connected component of $\mu^{-1}(L^j_y)$
through $y_i$. It is an open neighborhood of $y_i$ in $A^j_{y_i}$
and shrinking $U$ if necessary, $\rho|_{L^j_{y_i}}$ is a smooth
morphism from $y \in M^{\rm good}$ and the germ of the submanifold $\rho(L^j_{y_i}) \subset
\sK^{\rm bihol}$ at $\rho(y_i)$ is an irreducible component of the
germ of $K^j_{y_i}$ at $\rho(y_i)$.
Choose a neighborhood $O_{\rho(y_i)}$  as  in Definition \ref{d.web}
such that $\rho(L^j_{y_i}) \cap O_{\rho(y_i)}$ is smooth and
denote by $[\rho(L^j_{y_i}) \cap O_{\rho(y_i)}]$ its connected component through $\rho(y_i)$.
 Define $$\sV^i_{y_i} :=
\rho^{-1}([\rho(L^j_{y_i}) \cap O_{\rho(y_i)}]).$$    Then
$\mu|_{\sV^i_{y_i}}$ is an embedding into $M$. Its image lies
in $A^{ij}_y$ and contains an open subset of $A^{ij}_y$. Thus we
have a lifting $$\tilde{\mu}^i_{y_i}: \sV^i_{y_i} \to
\widetilde{A^{ij}_x}$$ which is an embedding satisfying $$ \mu|_{\sV^i_{y_i}} = \sigma
\circ \tilde{\mu}^i_{y_i} \mbox{ and }
\tilde{\mu}^i_{y_i}(A^i_{y_i}) = \widetilde{A^i_{y}}.$$ Define
$$\sV^i_y := \tilde{\mu}(\sV^i_{y_i}) \mbox{ and } \upsilon^i_y :=
(\tilde{\mu}^i_{y_i})^{-1}.$$  Define $\sV^j_y$ and
$\upsilon^j_y$ in the same way by switching $i$ and $j$.  Then all of
(1)-(4) are immediate from the construction. \end{proof}

\begin{proposition}\label{p.HilbA^ij}
In the setting of Proposition \ref{p.sigma}, the submanifolds
$\widetilde{A^i_x}$ and $\widetilde{A^j_x}$ of
$\widetilde{A^{ij}_x}$  belong to the same irreducible component
of the Hilbert scheme
 ${\rm Hilb}(\widetilde{A^{ij}_x})$ if and only if $A^{ij}_{x_i} = A^{ij}_{x_j}$.
\end{proposition}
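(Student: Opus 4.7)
The plan is to pin down the unique irreducible component of ${\rm Hilb}(\widetilde{A^{ij}_x})$ containing $[\widetilde{A^i_x}]$ as the closure of the family of strict transforms of the web members $\mu(\rho^{-1}(k))$ swept out by $k$ ranging in $\rho(A^{ij}_{x_i})$, and then read off both directions by asking whether $\widetilde{A^j_x}$ appears in this particular family. The crucial ingredient is a dimension count for ${\rm Hilb}(\widetilde{A^{ij}_x})$ at $[\widetilde{A^i_x}]$.

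For the count, Proposition~\ref{p.sigma} gives the embedding $\upsilon^i_x \colon \sV^i_x \hookrightarrow A^{ij}_{x_i} \cap \rho^{-1}(\sK^{\rm bihol})$ which realises a neighborhood of $\widetilde{A^i_x}$ in the smooth manifold $\widetilde{A^{ij}_x}$ as a neighborhood of the smooth fiber $A^i_{x_i}$ of $\rho|_{A^{ij}_{x_i}}$ over $\rho(x_i) \in \sK^{\rm bihol}$. So $A^{ij}_{x_i}$ is smooth along $A^i_{x_i}$, and $N_{A^i_{x_i}/A^{ij}_{x_i}}$ is the pullback of $T_{\rho(x_i)}\rho(A^{ij}_{x_i})$, a trivial bundle of rank $r := \dim \rho(A^{ij}_{x_i})$. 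Hence $N_{\widetilde{A^i_x}/\widetilde{A^{ij}_x}}$ is also trivial of rank $r$, and since $\widetilde{A^i_x}$ is connected projective we obtain $h^0(N) = r$, which bounds the Zariski tangent space of ${\rm Hilb}(\widetilde{A^{ij}_x})$ at $[\widetilde{A^i_x}]$ by $r$. Meanwhile the fibers $\rho^{-1}(k)$ for $k \in \rho(A^{ij}_{x_i}) \cap \sK^{\rm bihol}$ map biholomorphically onto web members $\mu(\rho^{-1}(k)) \subseteq A^{ij}_x$ by Definition~\ref{d.web}(ii), and their strict transforms in $\widetilde{A^{ij}_x}$ define an injective morphism $\rho(A^{ij}_{x_i}) \cap \sK^{\rm bihol} \to {\rm Hilb}(\widetilde{A^{ij}_x})$ by Definition~\ref{d.web}(iii). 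Its image is an irreducible subvariety of dimension $r$ passing through $[\widetilde{A^i_x}]$, so by the tangent-space bound its closure is the unique irreducible component of ${\rm Hilb}(\widetilde{A^{ij}_x})$ through $[\widetilde{A^i_x}]$.

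Both directions follow at once. If $A^{ij}_{x_i} = A^{ij}_{x_j}$, then $A^j_{x_j} = \rho^{-1}(\rho(x_j))$ is contained in $A^{ij}_{x_i}$, hence $\rho(x_j) \in \rho(A^{ij}_{x_i})$ and $\widetilde{A^j_x}$ is the strict transform arising at parameter $k = \rho(x_j)$, so $[\widetilde{A^j_x}]$ lies in the family. Conversely, if $[\widetilde{A^j_x}]$ lies in the component, then $\widetilde{A^j_x}$ is the strict transform of $\mu(\rho^{-1}(k))$ for some $k \in \rho(A^{ij}_{x_i})$; since $\widetilde{A^j_x}$ is also the strict transform of $A^j_x = \mu(\rho^{-1}(\rho(x_j)))$, Definition~\ref{d.web}(iii) forces $k = \rho(x_j)$, so $A^j_{x_j} \subseteq A^{ij}_{x_i}$ and $A^{ij}_{x_j} = A^{ij}_{x_i}$ by uniqueness of the irreducible component of $\mu^{-1}(A^{ij}_x)$ through $x_j$.

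The main obstacle is the normal-bundle calculation that pins down the Hilbert component: everything downstream is then a direct consequence of Definition~\ref{d.web}(iii). Proposition~\ref{p.sigma} is tailored precisely to supply the local embedding $\upsilon^i_x$ that makes $A^{ij}_{x_i}$ smooth along $A^i_{x_i}$ and trivialises $N_{\widetilde{A^i_x}/\widetilde{A^{ij}_x}}$, so once that input is organised the remainder of the argument is essentially bookkeeping.
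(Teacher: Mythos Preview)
Your approach matches the paper's: both pin down the component of ${\rm Hilb}(\widetilde{A^{ij}_x})$ through $[\widetilde{A^i_x}]$ as the closure of the family of strict transforms of web members parametrized by $\rho(A^{ij}_{x_i})\cap\sK^{\rm bihol}$, using the triviality of the normal bundle (your explicit computation, the paper's appeal to Proposition~\ref{p.sigma}(2)) to see that $[\widetilde{A^i_x}]$ is a smooth Hilbert point of the correct dimension, and then invoke Definition~\ref{d.web}(iii) for both directions.

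One step in your converse is not quite justified as written: from $[\widetilde{A^j_x}]$ lying in the \emph{closure} of the $i$-family you assert it actually lies in the family, i.e.\ equals the strict transform of $\mu(\rho^{-1}(k))$ for some $k\in\rho(A^{ij}_{x_i})$; a priori it could be a boundary point. The fix (and what the paper does, phrased as the contrapositive) is to run the same normal-bundle argument for $j$: then $[\widetilde{A^j_x}]$ is also a smooth Hilbert point whose component is the closure of the $j$-family, so if the two components coincide the two families agree on a dense open set, and Definition~\ref{d.web}(iii) forces $\rho(A^{ij}_{x_i})=\rho(A^{ij}_{x_j})$, hence $A^{ij}_{x_i}=A^{ij}_{x_j}$ by the description in Notation~\ref{n.pair}. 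With this small adjustment your argument is complete and coincides with the paper's.
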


\begin{proof}
Note that from Proposition \ref{p.sigma} (2), $\widetilde{A^i_x}$
and $\widetilde{A^j_x}$ correspond to smooth points of the Hilbert
scheme  ${\rm Hilb}(\widetilde{A^{ij}_x})$. So each of $\widetilde{A^i_x}$
and $\widetilde{A^j_x}$ belongs to a unique irreducible component
of the Hilbert scheme. At these smooth points, the Hilbert scheme
has dimension $$\dim A^{ij}_x - \dim A^i_x = \dim A^{ij}_x - \dim A^j_x = \rk(W^{ij}) -
\rk(W^i\cap W^j).$$   Thus, from Proposition \ref{p.sigma} (2) and (3),
all general deformations of $\widetilde{A^i_x}$ and $\widetilde{A^j_x}$ in
$\widetilde{A^{ij}_x}$ are proper transforms of the $\mu$-images of the fibers of the families
$$ A^{ij}_{x_i}\cap \rho^{-1}(\sK^{\rm bihol}) \to K^j_{x_i} \cap
\sK^{\rm bihol} \mbox{ and }  A^{ij}_{x_j}\cap \rho^{-1}(\sK^{\rm
bihol}) \to K^i_{x_j} \cap \sK^{\rm bihol},
$$  respectively. If $A^{ij}_{x_i} = A^{ij}_{x_j}$, then the two families
coincide. Thus $\widetilde{A^i_x}$ and $\widetilde{A^j_x}$ belong
to the same irreducible component of the Hilbert scheme
 ${\rm Hilb}(\widetilde{A^{ij}_x}).$

 On the other hand, if $A^{ij}_{x_i} \neq A^{ij}_{x_j}$,
from the effectiveness assumption in Definition \ref{d.web} (iii),
 deformations of $A^i_y$ and $A^j_y$ in $A^{ij}_x$  do not belong to
the same irreducible component of ${\rm Hilb}(A^{ij}_x)$. By
Proposition \ref{p.sigma} (2) and (3), this implies that
$\widetilde{A^i_x}$ and $\widetilde{A^j_x}$ do not belong to the same
irreducible component of
 ${\rm Hilb}(\widetilde{A^{ij}_x}).$
\end{proof}

\begin{proposition}\label{p.better}
In the setting of Proposition \ref{p.A^ij}, for a general point
$x\in M^{\rm good}$, the variety $A^{ij}_x$ is nonsingular at $x$
for any pair $(i,j) \in I \times I$.
\end{proposition}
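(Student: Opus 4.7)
The plan is to establish that the locus
\[ \sG \;=\; \{x \in M^{\rm good} : A^{ij}_x \text{ is nonsingular at } x\} \]
is a non-empty Zariski open subset of the irreducible variety $M^{\rm good}$, hence Zariski dense. Openness follows by applying the standard openness of the relative smooth locus to the algebraic family $\sA^{ij} \subset M^{\rm good} \times M$ of pairs $(y,z)$ with $z \in A^{ij}_y$ and pulling back along the diagonal section $y \mapsto (y,y)$; so the proof reduces to verifying $\sG \neq \emptyset$.

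Fix any $x \in M^{\rm good}$ with a classical neighborhood $U$ as in Definition \ref{d.good}. By Proposition \ref{p.A^ij}(ii), the smooth leaf $L^{ij}_x$ is an irreducible analytic component of $A^{ij}_x \cap U$, so in particular $\dim L^{ij}_x \leq \dim A^{ij}_x$. In fact equality holds: a short differential computation on the tangent space of $A^i_{x_j}$ at $x_j$, using that the kernel of $d\rho$ pushes forward via $d\mu$ to $W^j$, gives $\dim K^i_{x_j} = \rk(W^i)-\rk(W^i\cap W^j)$, and therefore
\[ \dim A^{ij}_x \;=\; \dim K^i_{x_j}+\rk(W^j) \;=\; 2\rk(W^j)-\rk(W^i\cap W^j) \;=\; \rk(W^{ij}) \;=\; \dim L^{ij}_x. \]
Since $A^{ij}_x$ is irreducible and $L^{ij}_x \subset A^{ij}_x$ is a full-dimensional analytic submanifold, the Zariski closure of $L^{ij}_x$ inside $A^{ij}_x$ is all of $A^{ij}_x$.

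Because $\text{Sing}(A^{ij}_x)$ is a proper closed subvariety of $A^{ij}_x$ of strictly smaller dimension, the density above forces $L^{ij}_x \not\subset \text{Sing}(A^{ij}_x)$, so the classically open set $L^{ij}_x \setminus \text{Sing}(A^{ij}_x)$ is non-empty. Pick any $y$ in this set; since $L^{ij}_x \subset U \subset M^{\rm good}$ we have $y \in M^{\rm good}$, and by Proposition \ref{p.leaf}(iii), $A^{ij}_y = A^{ij}_x$. Hence $y$ is a smooth point of $A^{ij}_y$, i.e., $y \in \sG$.

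The step I expect to be most delicate is not the density argument but the clean verification of openness of $\sG$: one must organize the varieties $A^{ij}_y$ into a genuine algebraic family near the diagonal (for instance by exploiting the Hilbert-scheme parameterization of members of $\sW$ together with the local $(i,j)$-labeling from Definition \ref{d.good}) so that openness of relative smoothness applies at the section $y \mapsto (y,y)$. Once this is in place, Proposition \ref{p.leaf}(iii) does the real work by transporting a generic smooth point of $A^{ij}_x$ found on the leaf back to the basepoint.
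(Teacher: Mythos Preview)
Your argument is correct and rests on the same key fact the paper uses: by Proposition~\ref{p.leaf}(iii), the variety $A^{ij}_x$ depends only on the $W^{ij}$-leaf through $x$. The paper packages this as a single dimension count---the subvarieties realizable as $A^{ij}_x$ form a family of dimension $\dim M - \dim A^{ij}_x$, so the union of their singular loci has dimension at most $\dim M-1$ and misses a general point---whereas you split the same content into ``openness of $\sG$'' plus ``non-emptiness via transporting a generic smooth point along the leaf.'' One simplification: your computation of $\dim A^{ij}_x$ through $\dim K^i_{x_j}$ is not needed, since $A^{ij}_x$ is irreducible and Proposition~\ref{p.A^ij}(ii) already gives $L^{ij}_x$ as a full-dimensional analytic component, forcing $\dim A^{ij}_x=\rk(W^{ij})$ directly.
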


\begin{proof}
From Proposition \ref{p.A^ij} (ii), it is clear that the
subvarieties in $M$ that can be realized as $A^{ij}_x$ form a family of dimension equal to $\dim
M - \dim A^{ij}_x$. Thus their singular loci are contained in a
proper subvariety in $M$. \end{proof}

\begin{proposition}\label{p.H_ij}
In the notation of Definition \ref{n.group} and Notation
\ref{n.pair}, choose $x \in M^{\rm good}$ general in the sense of
Proposition \ref{p.better}. Let ${\rm Sm}(A^{ij}_x)$ be the smooth
locus of $A^{ij}_x$ and let $H_{ij} \subset G$ be the image of
$$ \alpha \circ \lambda_{ij}: \pi_1({\rm Sm}(A^{ij}_x) \cap M^{\rm et}, x) \to \mathfrak{S}_X,$$
where the homomorphism $$\lambda_{ij}: \pi_1({\rm Sm}(A^{ij}_x)
\cap M^{\rm et}, x) \to \pi_1(M^{\rm et},x)$$ is induced by the
inclusion $A_x^{ij} \subset M$. Then $H_{ij}$ contains the
subgroups $H_i$ and $H_j$ in Definition \ref{n.group}.
\end{proposition}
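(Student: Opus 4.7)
The plan is to deduce $H_i, H_j \subset H_{ij}$ by factoring the maps $\lambda_i$ and $\lambda_j$ through the fundamental group of $A^{ij}_x \cap M^{\rm et}$, and then noting that the smooth locus of $A^{ij}_x$ carries the same fundamental-group information by a real-codimension-two argument.

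First, by Proposition~\ref{p.leaf}(i) we have $A^i_x \subset A^{ij}_x$ and $A^j_x \subset A^{ij}_x$. Consequently the inclusion induced map $\lambda_i$ factors as
\[
\pi_1(A^i_x \cap M^{\rm et}, x) \longrightarrow \pi_1(A^{ij}_x \cap M^{\rm et}, x) \longrightarrow \pi_1(M^{\rm et}, x),
\]
and analogously for $\lambda_j$. Hence both $H_i$ and $H_j$ are contained in the image of $\alpha$ restricted to the image of $\pi_1(A^{ij}_x \cap M^{\rm et}, x) \to \pi_1(M^{\rm et}, x)$. It therefore suffices to show that this latter image coincides with $H_{ij}$, i.e.\ that the inclusion
\[
\iota: {\rm Sm}(A^{ij}_x) \cap M^{\rm et} \hookrightarrow A^{ij}_x \cap M^{\rm et}
\]
induces a surjection on $\pi_1$ based at $x$.

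The surjectivity of $\iota_\ast$ is where the genericity hypothesis enters. By Proposition~\ref{p.better}, $x$ lies in ${\rm Sm}(A^{ij}_x)$, so ${\rm Sing}(A^{ij}_x)$ is a proper closed analytic subset of the irreducible complex-analytic variety $A^{ij}_x$ of complex codimension $\geq 1$, hence of real codimension $\geq 2$. Its intersection with the open set $M^{\rm et}$ inherits the same codimension bound inside $A^{ij}_x \cap M^{\rm et}$, and the complement is exactly ${\rm Sm}(A^{ij}_x) \cap M^{\rm et}$. By the standard general-position argument — any continuous loop $S^1 \to A^{ij}_x \cap M^{\rm et}$ based at $x$, and any null-homotopy between such loops, can be perturbed off a closed analytic subset of real codimension $\geq 2$, keeping the basepoint $x$ fixed since $x \in {\rm Sm}(A^{ij}_x)$ — the map $\iota_\ast$ is surjective. (Alternatively one could lift loops via the desingularization $\sigma: \widetilde{A^{ij}_x} \to A^{ij}_x$ of Proposition~\ref{p.sigma}, which is an isomorphism over ${\rm Sm}(A^{ij}_x)$.)

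Combining the two steps, $H_i$ and $H_j$ both lie in $H_{ij}$, which is the conclusion. The only nontrivial step is the surjectivity of $\iota_\ast$; this is where I would expect the most care to be needed, since $A^{ij}_x$ need not be normal or locally irreducible, and one must verify that the standard transversality argument applies to the (possibly singular) complex-analytic space $A^{ij}_x \cap M^{\rm et}$ — an application of triangulability of analytic sets, or of the surjectivity of $\pi_1$ on the smooth locus of a complex variety containing a smooth basepoint, suffices.
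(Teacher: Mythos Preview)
Your argument is correct, but it is organized differently from the paper's, and the difference is worth noting.

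You factor $\lambda_i$ through $\pi_1(A^{ij}_x\cap M^{\rm et},x)$ and then invoke the surjectivity of
\[
\iota_*:\pi_1\bigl({\rm Sm}(A^{ij}_x)\cap M^{\rm et},x\bigr)\longrightarrow \pi_1\bigl(A^{ij}_x\cap M^{\rm et},x\bigr).
\]
As you yourself flag, this surjectivity is a statement about removing a closed analytic subset from a \emph{singular} variety, and the naive general-position argument you sketch (perturb loops and homotopies off a real-codimension-$\geq 2$ set) is literally an argument on manifolds. The claim is nonetheless true for irreducible complex varieties---via triangulation/Whitney stratification or via a resolution, both of which you mention---so your proof goes through.

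The paper sidesteps this issue entirely. Instead of passing through $\pi_1(A^{ij}_x\cap M^{\rm et})$, it considers
\[
\lambda_i^o:\pi_1\bigl(A^i_x\cap{\rm Sm}(A^{ij}_x)\cap M^{\rm et},x\bigr)\longrightarrow \pi_1(M^{\rm et},x),
\]
which factors through $\lambda_{ij}$ because $A^i_x\cap{\rm Sm}(A^{ij}_x)\cap M^{\rm et}\subset{\rm Sm}(A^{ij}_x)\cap M^{\rm et}$, and then observes that the natural map
\[
\theta_i:\pi_1\bigl(A^i_x\cap{\rm Sm}(A^{ij}_x)\cap M^{\rm et},x\bigr)\longrightarrow \pi_1\bigl(A^i_x\cap M^{\rm et},x\bigr)
\]
is surjective. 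Here the complement $A^i_x\cap{\rm Sing}(A^{ij}_x)\cap M^{\rm et}$ is a proper subvariety of the \emph{nonsingular} irreducible variety $A^i_x\cap M^{\rm et}$, so the surjectivity is the standard codimension-$\geq 2$ fact on a manifold, with no appeal to triangulability of singular spaces. Since $\lambda_i^o=\lambda_i\circ\theta_i$, the image of $\alpha\circ\lambda_i^o$ equals $H_i$, and it lies in $H_{ij}$.

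In short: both routes work, but the paper's avoids the singular space altogether by doing the ``remove a thin set'' step inside the smooth $A^i_x$ rather than inside $A^{ij}_x$. That makes the argument strictly more elementary.
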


\begin{proof}
The homomorphism
$$\lambda^o_i: \pi_1(A^i_x \cap {\rm Sm}(A^{ij}_x) \cap M^{\rm et}, x)  \to
\pi_1(M^{\rm et},x)$$ induced by the inclusion $A^i_x \subset M$
factors through $\lambda_{ij}$. Thus $H_{ij}$ contains the images
of $\alpha \circ \lambda^o_i$.

Denote by $\theta_i$  the homomorphism
$$ \theta_i: \pi_1(A^i_x \cap {\rm Sm}(A^{ij}_x) \cap M^{\rm et},
x) \to \pi_1(A^i_x \cap M^{\rm et}, x) $$ induced by the obvious
inclusion. Then $\lambda_i^o = \lambda_i \circ \theta_i$.  The
complement
$$(A^i_x \cap M^{\rm et}) \setminus (A^i_x \cap {\rm Sm}(A^{ij}_x)
\cap M^{\rm et}) $$ is a proper subvariety in the nonsingular
irreducible variety $A^i_x \cap M^{\rm et}$ from $A^i_x \subset
A_x^{ij}$ of Proposition \ref{p.leaf}. Thus $\theta_i$ must be
surjective.

It follows that $H_{ij}$ contains the image of $\alpha \circ
\lambda_i$  i.e., $H_i$. By the same reasoning, $H_{ij}$ contains
$H_j$, too.
\end{proof}

\begin{proposition}\label{p.separate}
In the setting of  Proposition
\ref{p.H_ij}, if $A^{ij}_{x_i} \neq A^{ij}_{x_j},$ then
$x_i$ and $x_j$ are not in the same
$H_{ij}$-orbit in $X = \mu^{-1}(x)$.
\end{proposition}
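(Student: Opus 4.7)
The plan is to reinterpret the $H_{ij}$-orbit structure on $X$ via connected components of an \'etale cover, and then separate the components through $x_i$ and $x_j$ by a dimension argument.

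First I would use Proposition~\ref{p.etale} to observe that $\mu|_{\sU^{\rm et}}:\sU^{\rm et}\to M^{\rm et}$ is \'etale, so its base change to $\mathrm{Sm}(A^{ij}_x)\cap M^{\rm et}$ is again an \'etale cover with fiber $X$ over $x$. Its monodromy representation is precisely $\alpha\circ\lambda_{ij}$, whose image is $H_{ij}$ by definition. To invoke the standard orbits--components correspondence I need the base to be connected: $A^{ij}_x$ is irreducible as the $\mu$-image of the irreducible variety $A^{ij}_{x_i}$, and by Proposition~\ref{p.better} (applicable since $x$ is chosen general in $M^{\rm good}$) it is smooth at $x$, so $\mathrm{Sm}(A^{ij}_x)\cap M^{\rm et}$ is a nonempty Zariski open subset of an irreducible variety, hence connected. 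Consequently the $H_{ij}$-orbits on $X$ are in bijection with the connected components of $\mu^{-1}(\mathrm{Sm}(A^{ij}_x)\cap M^{\rm et})$ meeting $X$, each orbit being the intersection of such a component with $X$.

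Next I would pin down the connected component $Z_k$ containing $x_k$ for $k=i,j$. Since $Z_k$ is \'etale over a smooth connected variety, it is smooth and connected, hence irreducible, of dimension equal to $\dim A^{ij}_x$. Its closure in $\sU$ is therefore an irreducible closed subvariety of $\mu^{-1}(A^{ij}_x)$ of full dimension containing $x_k$; because $\mu$ is \'etale at $x_k$ and $A^{ij}_x$ is smooth at $x$, the preimage $\mu^{-1}(A^{ij}_x)$ is smooth at $x_k$ and so has a unique irreducible component through that point, which by Proposition~\ref{p.leaf}(i) is $A^{ij}_{x_k}$. Hence $\overline{Z_k}=A^{ij}_{x_k}$.

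From here the conclusion is immediate: if $x_i$ and $x_j$ lay in the same $H_{ij}$-orbit we would have $Z_i=Z_j$, forcing $A^{ij}_{x_i}=\overline{Z_i}=\overline{Z_j}=A^{ij}_{x_j}$, contradicting the hypothesis. The main obstacle is essentially bookkeeping: once the monodromy of the restricted \'etale cover is correctly identified with $H_{ij}$ and the base $\mathrm{Sm}(A^{ij}_x)\cap M^{\rm et}$ is verified to be connected, the geometric content reduces to a clean dimension count and uses no further structure of the pairwise integrable web beyond what is already packaged in Propositions~\ref{p.leaf} and \ref{p.better}.
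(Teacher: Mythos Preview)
Your argument is correct and follows essentially the same approach as the paper: both restrict the \'etale cover $\mu$ to ${\rm Sm}(A^{ij}_x)\cap M^{\rm et}$, identify the $H_{ij}$-orbits on $X$ with the connected components of this restricted cover, and then use Proposition~\ref{p.leaf}(i) together with smoothness of $A^{ij}_x$ at $x$ to see that the component through $x_k$ is a dense open in $A^{ij}_{x_k}$. The only cosmetic difference is that the paper computes the orbit directly as $X\cap A^{ij}_{x_k}$, whereas you phrase it contrapositively via closures.
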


\begin{proof}
Since $A^{ij}_x$ is smooth at $x$, for each $x_k \in X$, there
exists a unique irreducible component of  $\mu^{-1}(A^{ij}_x)$
containing $x_k$. Thus the assumption $A^{ij}_{x_i} \neq
A^{ij}_{x_j}$ implies  $X \cap A^{ij}_{x_i} \neq X \cap
A^{ij}_{x_j}.$  To prove the proposition, it suffices to show that
$X \cap A^{ij}_{x_i}$ (resp. $ X \cap A^{ij}_{x_j}$) is the
$H_{ij}$-orbit of $x_i$ (resp. $x_j$).

 From Proposition \ref{p.leaf} (i), $A^{ij}_{x_i}$ is the
 irreducible component of $\mu^{-1}(A^{ij}_x)$ through $x_i$. Thus
$$A^{ij}_{x_i} \cap \mu^{-1}({\rm Sm}(A^{ij}_x) \cap M^{\rm et})$$
is precisely the connected component of $\mu^{-1}({\rm
Sm}(A^{ij}_x) \cap M^{\rm et})$ containing $x_i$.    Then  the
$H_{ij}$-orbit of $x_i$  is $$ X \cap A^{ij}_{x_i} \cap
\mu^{-1}({\rm Sm}(A^{ij}_x) \cap M^{\rm et}) = X \cap
A^{ij}_{x_i}$$ because $X = \mu^{-1}(x) \subset \mu^{-1}({\rm
Sm}(A^{ij}_x) \cap M^{\rm et})$ by our choice of $x$. In the same
way, the $H_{ij}$-orbit of $x_j$ is $X \cap A^{ij}_{x_j}$.
\end{proof}

\section{Pairwise integrable webs of tori}

The goal of this section is to prove Proposition \ref{p.toruslift}
about    pairwise integrable webs of tori on  projective
manifolds. Its proof requires some standard results on deformations
of submanifolds with trivial normal bundles. We start by
recalling them.

\begin{definition}\label{d.unobstruct}
A submanifold $A$ of a projective manifold $Z$ is {\em
unobstructed} if  the Hilbert scheme ${\rm
Hilb}(Z)$ of $Z$ is smooth at $[A] \in {\rm Hilb}(Z)$, the point
corresponding to $A$. In this case, denote by ${\rm Hilb}(Z)_A$
the (unique) irreducible component of ${\rm Hilb}(Z)$ containing
$[A]$, by $\xi_A: {\rm Univ}(Z)_A \to {\rm Hilb}(Z)_A$ the
universal family and by $\eta_A: {\rm Univ}(Z)_A \to Z$ the
evaluation morphism.
\end{definition}

\begin{proposition}\label{p.unobstruct}
For an unobstructed submanifold $A$ with trivial normal
bundle in a projective manifold $Z$, denote by ${\rm Hilb}(Z)_A^o
\subset {\rm Hilb}(Z)_A$  the open subset consisting of points
parametrizing unobstructed submanifolds with trivial normal
bundles. Denote by $\xi_A^o: {\rm Univ}(Z)_A^o \to {\rm
Hilb}(Z)_A^o$ and  $\eta_A^o: {\rm Univ}(Z)_A^o \to Z$, the
restrictions of $\xi_A$ and $\eta_A$, respectively.  Then
$\xi_A^o$ is a smooth projective morphism and  $\eta_A^o$ is
unramified. If furthermore $\eta_A$ is birational, then $\eta_A^o$
is biregular to its image which is a Zariski open subset in $Z$.
\end{proposition}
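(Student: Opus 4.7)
The plan is to verify the three claims in turn, using the Kodaira--Spencer description of the tangent spaces to the Hilbert scheme together with the triviality of the normal bundle.

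First, I would show that $\xi_A^o$ is smooth and projective. Projectivity is inherited from $\xi_A$. Smoothness follows because at every point $[B]\in {\rm Hilb}(Z)_A^o$ the Hilbert scheme is smooth by definition of ${\rm Hilb}(Z)_A^o$ and the fiber $B$ is nonsingular, so the flat morphism $\xi_A$ is smooth at every point of the fiber over $[B]$; restricting to the open locus ${\rm Univ}(Z)_A^o$ preserves this.

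Next, I would show $\eta_A^o$ is unramified by computing its differential pointwise. Given $b\in {\rm Univ}(Z)_A^o$ with $[B]=\xi_A^o(b)$ and $z=\eta_A^o(b)\in B\subset Z$, the smoothness of $\xi_A^o$ yields an exact sequence
$$0 \to T_b B \to T_b {\rm Univ}(Z)_A^o \to T_{[B]}{\rm Hilb}(Z)_A^o \to 0,$$
and deformation theory identifies the right-hand term with $H^0(B, N_{B/Z})$. The differential $d\eta_A^o$ restricts to the inclusion $T_b B \hookrightarrow T_z Z$ on the first factor, while modulo $T_b B$ it coincides with the evaluation map $H^0(B, N_{B/Z}) \to (N_{B/Z})_b \cong T_z Z/T_b B$. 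Since $N_{B/Z}$ is trivial of rank equal to the codimension, this evaluation is an isomorphism, so $d\eta_A^o$ is an isomorphism. In particular $\eta_A^o$ is unramified (indeed \'etale), and both source and target have the same dimension $\dim Z$.

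Finally, if $\eta_A$ is birational, then $\eta_A^o$ is a birational \'etale morphism between irreducible varieties of the same dimension $\dim Z$. The degree of an \'etale morphism is locally constant, so it equals $1$ everywhere on ${\rm Univ}(Z)_A^o$. Combined with the openness of \'etale morphisms, this forces $\eta_A^o$ to be a biregular embedding onto a Zariski open subset of $Z$.

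The step requiring most care is the identification of $d\eta_A^o$ modulo $T_b B$ with the evaluation of sections of $N_{B/Z}$ at $b$; this is the standard Kodaira--Spencer computation for a universal family, and once it is in place the triviality of $N_{B/Z}$ immediately yields \'etaleness and the rest of the argument is formal.
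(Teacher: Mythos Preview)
Your proof is correct and follows essentially the same approach as the paper's: smoothness of $\xi_A^o$ from smoothness of the fibers, unramifiedness of $\eta_A^o$ from the triviality of the normal bundle, and the open embedding from birationality plus \'etaleness. You simply spell out in detail the Kodaira--Spencer identification and the degree argument that the paper leaves implicit.
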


\begin{proof}
Since all members of ${\rm Hilb}(Z)_A^o$ are smooth, $\xi_A^o$ is
a smooth projective morphism.
 The evaluation morphism $\eta^o_A$ is unramified by the
triviality of the normal bundles of members of ${\rm
Hilb}(Z)_A^o$. If $\eta_A$ is birational, the unramified morphism
$\eta_A^o$ must be biregular over its image.
\end{proof}

\begin{remark} Proposition \ref{p.unobstruct} implies that an unobstructed
submanifold with trivial normal bundle is a member of a web of submanifolds. Conversely, a member of a web of submanifolds is
an unobstructed submanifold with trivial normal bundle. Thus one can replace  Definition \ref{d.web} by Proposition \ref{p.unobstruct}
and develop all the theory starting from there. But we prefer Definition \ref{d.web}, because it is more geometrically appealing (to us) and also the approach via Hilbert scheme  plays a rather restricted role
in this paper; it is used essentially  only in this section. \end{remark}

%The proof of the following easy lemma will be skipped.
%
%\begin{lemma}\label{l.unobstruct}
%Given an irreducible family of submanifolds with trivial normal
%bundles on a projective manifold $Z$, any member through a general
%point of $Z$ is unobstructed. \end{lemma}

%
%
%\begin{proposition}\label{p.torus}
%Assume that $A$ in Proposition \ref{p.unobstruct} is biregular to
%a complex torus. Then all members of ${\rm Hilb}(Z)_A^o$ are
%complex tori. \end{proposition}
%
%\begin{proof} This is well-known. See, e.g., Theorem 6.14 of
%\cite{Mu}. \end{proof}

\begin{proposition}\label{p.subtorus}
Assume that $A$ in Proposition \ref{p.unobstruct} is biregular to
a complex torus. Suppose there exists a subtorus $S \subset A$
such that deformations of $S$ in $Z$ cover an open subset in $Z$. Then we
have the following.
\begin{enumerate} \item[(i)] There exists an open neighborhood $U
\subset Z$ of $A$ equipped with a smooth projective morphism $f: U
\to \Delta^n$ over a polydisc $\Delta^n$ of dimension $n = \dim Z
- \dim A$,  such that fibers of $f$ give deformations of $A$ in
$U$. \item[(ii)]  There exists a smooth projective morphism
$\zeta: U \to U'$ over a complex manifold $U'$ whose fibers are
deformations of $S$. Thus $S$ is unobstructed with trivial normal bundle in $Z$ and $\zeta$
induces a natural embedding of $U'$ into ${\rm
Hilb}(Z)_S^o$, realizing $U'$ as an open neighborhood of  $[S] \in
{\rm Hilb}(Z)_S^o$. \item[(iii)] There exists a smooth projective
morphism $f': U' \to \Delta^n$ with $f= f' \circ \zeta$ such that
the fibers of $f'$ are the quotient tori of  deformations of $A$
by  deformations of $S$.
\end{enumerate}
\end{proposition}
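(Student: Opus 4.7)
I would prove parts (i)--(iii) in order.

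\emph{Part (i).} Since $A \subset Z$ is an unobstructed torus with trivial normal bundle of rank $n = \dim Z - \dim A$, one has $h^0(A, N_{A/Z}) = n$, and Proposition \ref{p.unobstruct} applies. The morphism $\xi_A^o$ is smooth projective, and $\eta_A^o$ is unramified between manifolds of the same dimension $\dim Z$, hence a local biholomorphism. Its restriction to the fiber over $[A]$ is the original embedding $A \hookrightarrow Z$, so by compactness of $A$, after restricting to a tubular neighborhood of this fiber and shrinking the base to a polydisc $\Delta^n \subset {\rm Hilb}(Z)_A^o$ around $[A]$, the map $\eta_A^o$ becomes a biholomorphism onto an open neighborhood $U$ of $A$. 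Setting $f := \xi_A \circ (\eta_A^o)^{-1}$ then yields the required smooth projective morphism.

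\emph{Part (ii).} A first observation is that any sufficiently small deformation $S' \subset U$ of $S$ lies in a single fiber of $f$, since $f|_{S'}: S' \to \Delta^n \subset \C^n$ is a holomorphic map from a compact complex manifold to $\C^n$, hence constant. Combining this with a dimension count using the hypothesis shows that the set of $t \in \Delta^n$ for which $A_t$ contains a subtorus in the homology class of $S \subset A$ is a non-empty open subset of $\Delta^n$. The heart of the argument is to produce a holomorphic subfamily $\sS \subset U$ whose fiber $S_t \subset A_t$ realizes this class for every $t$. For this I would use the variation of Hodge structure on $R^1 f_* \Z$ over $\Delta^n$: the sublattice $\iota_*(H_1(S, \Z)) \subset H_1(A, \Z)$ transports flatly to sublattices $L_t \subset H_1(A_t, \Z)$, and the condition ``$L_t$ is a sub-Hodge structure'' cuts out a closed analytic subset of $\Delta^n$ (the vanishing locus of holomorphic minors associated to the Hodge filtration). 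By the previous observation this subset has non-empty interior, so by analyticity and connectedness of $\Delta^n$ it must equal $\Delta^n$. After choosing a holomorphic section of $f$ (available since $\Delta^n$ is Stein), each $L_t$ cuts out a subtorus $S_t \subset A_t$, and these assemble into a holomorphic subfamily $\sS$.

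Set $U' := U/\sS$, the total space of the family of quotient tori $B_t := A_t/S_t$ over $\Delta^n$, and let $\zeta: U \to U'$ be the natural quotient. By construction, $\zeta$ is smooth projective with fibers that are translates of $S_t$ in $A_t$, i.e., deformations of $S$. For the triviality of $N_{S/Z}$ and unobstructedness of $[S]$, I would use the exact sequence
$$0 \to N_{S/A} \to N_{S/Z} \to N_{A/Z}|_S \to 0,$$
whose outer terms are trivial bundles of total rank $\dim Z - \dim S$; this gives $h^0(N_{S/Z}) \leq \dim Z - \dim S$, matched by $\dim_{[S]} {\rm Hilb}(Z) \geq \dim Z - \dim S$ from the $(\dim Z - \dim S)$-parameter family $\zeta$. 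Equality forces the long exact sequence on $H^0$ to be short-exact, splitting the extension and trivializing $N_{S/Z}$, and making $[S]$ a smooth point of the Hilbert scheme of the expected dimension. The map $U' \to {\rm Hilb}(Z)_S^o$, $u \mapsto [\zeta^{-1}(u)]$, is then an injective holomorphic map between complex manifolds of equal dimension, hence an open embedding as claimed.

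\emph{Part (iii).} Let $f': U' \to \Delta^n$ be the natural projection $B_t \mapsto \{t\}$. It is smooth projective, satisfies $f = f' \circ \zeta$ by construction, and its fibers are the quotient tori $A_t/S_t$. The main obstacle is constructing the subfamily $\sS$: combining the open condition from the hypothesis with the closedness of the sub-Hodge-structure condition to deduce its validity throughout $\Delta^n$, and then globalizing the pointwise data into a holomorphic family of subtori. Once $\sS$ is in hand, the remaining statements follow from standard deformation-theoretic dimension counts.
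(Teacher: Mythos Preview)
Your argument is correct, and Part~(i) matches the paper's proof essentially verbatim. For (ii) and (iii), however, you take a noticeably longer route than the paper does.

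The paper argues directly: since every small deformation of $S$ in $U$ lies in a fiber of $f$ (as you also observe), and since deformations of $S$ cover an open subset of $Z$, one can shrink $U$ and choose a section $\Sigma$ of $f$ through a point of $S$ lying entirely inside the locus swept out by deformations of $S$. This section turns $f:U\to\Delta^n$ into an analytic abelian scheme, and the deformations of $S$ that meet $\Sigma$ directly furnish the family of subtori $\sS$ (each being a compact submanifold of a torus with trivial normal bundle, hence a subtorus). The quotient $\zeta:U\to U'$ and the map $f'$ then fall out immediately, and the Hilbert-scheme statements about $S$ are simply absorbed into ``$U',f',\zeta$ have the required properties.''

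Your approach replaces this direct construction with a Hodge-theoretic one: you transport the sublattice $\iota_*H_1(S,\Z)$ flatly over $\Delta^n$ and argue that the sub-Hodge-structure condition, being closed analytic and satisfied on a set with non-empty interior, must hold on all of $\Delta^n$. This is valid and has the virtue of making explicit \emph{why} the subtorus persists in every fiber, but it imports more machinery than the situation requires. You also spell out in detail why $N_{S/Z}$ is trivial and $[S]$ is a smooth point of ${\rm Hilb}(Z)$, via the normal-bundle sequence and a dimension count forcing the extension to split; the paper leaves this implicit. One small imprecision: your dimension count really shows that the locus of $t$ with a subtorus in the class of $S$ has non-empty \emph{interior}, not that it is open---but that is exactly what you need for the ``closed analytic with interior in a connected base'' step, so the argument goes through.
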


\begin{proof}
By Proposition \ref{p.unobstruct}, we can choose a polydisc
neighborhood $\Delta^n \subset {\rm Hilb}(Z)_A^o$ of $[A]$ such that
setting  $U^1:= \xi_A^{-1}(\Delta^n)$, the morphism
$\eta_A|_{U^1}$ is biholomorphic to its image $U:= \eta_A(U^1) \subset Z$. Then
we have $f: U \to \Delta^n$ satisfying $$\xi_A|_{U^1} = f \circ
\eta_A|_{U^1}.$$ Since $\Delta^n$ contains no positive-dimensional
compact subvariety, all deformations of $S$ in $U$ are contained
in fibers of $f$. By shrinking $U$, we can assume that there
exists a section $\Sigma \subset U$ of $f$ which intersects $S$
and is contained in the locus of deformations of $S$. This makes
$f:U \to \Delta^n$ into a holomorphic family of complex torus
groups (analytic abelian scheme over $\Delta^n$) equipped with a
family of subtori.
Let $f': U' \to
\Delta^n$ be the family of quotient groups with $\zeta: U \to U'$
the quotient morphism. Then $U', f'$ and $\zeta$ have the
required properties.
\end{proof}

\begin{proposition}\label{p.zeta}
In the setting of Proposition \ref{p.subtorus}, assume that
$\eta_S: {\rm Univ}(Z)_S \to Z$ is birational. Then we have the
following. \begin{itemize} \item[(a)] There exists a Zariski open
subset $Z^o \subset Z$ with a smooth projective morphism $\zeta:
Z^o \to \sM^o$ to a smooth variety $\sM^o$ whose fibers  are
deformations of $S$. \item[(b)]
 For a general member  $[A^t] \in {\rm Hilb}(Z)_A^o$, there exists $[S^t] \in {\rm Hilb}(Z)_S^o$
 such that $S^t \subset A^t$ is a subtorus. \item[(c)] $ A^t$ in (b) lies in $Z^o$ and $\zeta(A^t) \subset
  \sM^o$
 is an unobstructed torus with trivial normal bundle, biregular to
  the quotient torus $A^t/S^t$.
  \end{itemize}
 \end{proposition}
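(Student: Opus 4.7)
The plan is to derive (a), (b), and (c) in turn, with (a) a direct application of Proposition \ref{p.unobstruct} to the subtorus $S$, (b) obtained by combining the local picture of Proposition \ref{p.subtorus} with a Zariski-density argument, and (c) read off by applying Proposition \ref{p.subtorus} a second time, now at a general $[A^t]$.

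For (a), I would first observe that Proposition \ref{p.subtorus}(ii) already tells us that $S$ is unobstructed in $Z$ with trivial normal bundle, so the notation ${\rm Hilb}(Z)_S^o$, $\xi_S^o$, $\eta_S^o$ from Proposition \ref{p.unobstruct} applies to $S$. The hypothesis that $\eta_S$ is birational then lets me invoke the last sentence of Proposition \ref{p.unobstruct}: $\eta_S^o$ is biregular onto a Zariski open subset $Z^o \subset Z$. Setting $\sM^o \subset {\rm Hilb}(Z)_S^o$ to be the open subscheme parameterizing deformations of $S$ actually contained in $Z^o$, the composition $\zeta := \xi_S^o \circ (\eta_S^o|_{Z^o})^{-1}$ is a smooth projective morphism $Z^o \to \sM^o$ whose fibers are, by construction, deformations of $S$. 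Smoothness of $\sM^o$ follows from the unobstructedness of $S$.

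For (b), Proposition \ref{p.subtorus}(iii) applied at $[A]$ already gives, over a classical polydisc neighborhood $\Delta^n \subset {\rm Hilb}(Z)_A^o$ of $[A]$, a subtorus $S^t \subset A^t$ for every $[A^t] \in \Delta^n$, with $S^t$ a deformation of $S$. To upgrade this to generic $[A^t]$, I would introduce the incidence subscheme $W \subset {\rm Hilb}(Z)_A^o \times {\rm Hilb}(Z)_S^o$ cut out by the closed condition $S^t \subset A^t$ (closed by properness of the universal families over their Hilbert scheme components). The first projection $\pi_1: W \to {\rm Hilb}(Z)_A^o$ is a morphism of finite type whose image is constructible; the local statement just established shows this image contains a classical open neighborhood of $[A]$, hence contains a Zariski dense open subset of the irreducible component ${\rm Hilb}(Z)_A^o$.

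For (c), fix a general $[A^t]$ together with the corresponding $[S^t]$ with $S^t \subset A^t$ produced by (b). Generality together with the Zariski-openness of $Z^o$ forces $A^t \subset Z^o$, so $\zeta$ restricts to a morphism $A^t \to \sM^o$. Now apply Proposition \ref{p.subtorus} a second time, this time at $[A^t]$ with its chosen subtorus $S^t$: the resulting local factorization $\zeta: U \to U'$ over a neighborhood of $A^t$ must agree, on fibers, with the global $\zeta$ of (a). Comparing dimensions shows the fibers of $\zeta|_{A^t}$ are precisely translates of $S^t$, so $\zeta(A^t)$ is biregular to the quotient torus $A^t/S^t$. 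The local family $f': U' \to \Delta^n$ from Proposition \ref{p.subtorus}(iii) then provides a smooth family of deformations of $\zeta(A^t)$ in $\sM^o$ covering a classical open neighborhood of $[\zeta(A^t)]$, from which unobstructedness and triviality of the normal bundle follow by standard deformation theory.

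The main obstacle will be the passage from a classical neighborhood of $[A]$ to a Zariski dense open in (b); everything else is essentially formal once one allows Proposition \ref{p.subtorus} to be applied both at $A$ and at a general $A^t$. A secondary technical point in (c) is verifying that the globally defined $\zeta$ from (a) really does agree, up to translation in the target, with the locally defined quotient map from the second application of Proposition \ref{p.subtorus}; this is where the rigidity of morphisms between tori does the work of pinning down the fibers of $\zeta|_{A^t}$ as cosets of $S^t$.
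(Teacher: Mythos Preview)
Your proposal is correct and follows essentially the same approach as the paper: (a) via Proposition~\ref{p.unobstruct} applied to $S$, (b) from the local picture of Proposition~\ref{p.subtorus} combined with a density argument (which you make explicit via an incidence scheme, while the paper leaves it implicit), and (c) by reapplying Proposition~\ref{p.subtorus} at a general $A^t$. The only notable difference is in (c): the paper shows $A^t \subset Z^o$ directly by noting that all translates of $S^t$ inside $A^t$ lie in ${\rm Hilb}(Z)_S^o$, whereas your appeal to ``generality together with Zariski-openness of $Z^o$'' tacitly requires that the open set $\{[A^t]: A^t \subset Z^o\}$ be nonempty---true since $[A]$ itself lies in it by Proposition~\ref{p.subtorus}(ii), but worth stating.
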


 \begin{proof}
 By Proposition \ref{p.unobstruct} applied to $S$ in place of $A$,
we obtain a Zariski open subset $Z^o \subset Z$ as the image of
$\eta_S^o$ and a morphism $\zeta: Z^o \to \sM^o$ over a smooth
variety $\sM^o \cong {\rm Hilb}(Z)_S^o$, proving (a).

Since deformations of  $S$ cover an open subset in $Z$, Proposition
\ref{p.subtorus} shows that a general deformation of $A$ contains a
deformation of $S$ as a subtorus. This proves (b).

To see (c), apply
  Proposition \ref{p.subtorus}  in a neighborhood of $A^t$ containing $S^t$.
  All translates  of $S^t$ inside $A^t$
 are elements of ${\rm Hilb}(Z)^o_S$, implying $A^t \subset Z^o$.
 That $\zeta (A^t)$ is unobstructed and has trivial normal bundle
 is immediate from   Proposition \ref{p.subtorus} (iii).
 \end{proof}

\begin{proposition}\label{p.intersection}
Let $A$ be an unobstructed submanifold in a projective manifold
$Z$ with trivial normal bundle such that $\dim Z = 2 \dim A$. For
any $[A'] \subset {\rm Hilb}(Z)_A^o$,  the intersection $A \cap
A'$ has no isolated point.
\end{proposition}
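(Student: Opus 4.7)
The plan is to derive a contradiction from the assumption that $p\in A\cap A'$ is isolated, by combining two inputs: the self-intersection $[A]^2$ vanishes (from triviality of $N_{A/Z}$), while any isolated point of an intersection of complex submanifolds of complementary dimension contributes a strictly positive local multiplicity that is stable under small deformation. First I would observe that $[A]\cdot[A']=0$. Since $[A']$ and $[A]$ lie in the same irreducible component ${\rm Hilb}(Z)_A$, their fundamental classes in $H^{2n}(Z;\Z)$ coincide, where $n=\dim A$. By the self-intersection formula,
$$[A]\cdot[A']=[A]^2=\int_A c_n(N_{A/Z})=0,$$
the vanishing coming from triviality of $N_{A/Z}$ together with $n=\dim Z-\dim A=\dim A\geq 1$.

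Next, by Proposition \ref{p.unobstruct} together with the dimension hypothesis, $\dim{\rm Hilb}(Z)_A^o=h^0(N_{A/Z})=n$, hence $\dim{\rm Univ}(Z)_A^o=2n=\dim Z$, so the unramified morphism $\eta_A^o$ is \'etale onto its image. The pullback $I:=(\eta_A^o)^{-1}(A)$ is therefore smooth of pure dimension $n$, and its scheme-theoretic fibers under $\xi_A^o$ are exactly the intersections $\{t\}\times(A\cap A_t)$. Assume $p\in A\cap A'$ is isolated. Then $([A'],p)$ is an isolated point of the fiber of $\xi_A^o|_I$ over $[A']$, and the irreducible component $I_p$ of $I$ through $([A'],p)$, whose germ there is biholomorphic to the germ of $A$ at $p$ via the local inverse of $\eta_A^o$, has dimension $n=\dim{\rm Hilb}(Z)_A^o$. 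Consequently $\xi_A^o|_{I_p}$ is dominant on an analytic neighborhood $T$ of $[A']$, and since every irreducible component of $I$ has dimension $n$, the generic fiber of $\xi_A^o|_I$ over $T$ is $0$-dimensional; in other words, for generic $t\in T$ the intersection $A\cap A_t$ is proper.

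The contradiction is then immediate. Fix $t\in T$ generic and sufficiently close to $[A']$. Stability of isolated complex intersections under small deformations gives $\sum_{q\text{ near }p}i(A,A_t;q)=i(A,A';p)\geq 1$, so $A\cap A_t$ is non-empty near $p$; since $A\cap A_t$ is proper and every local complex intersection multiplicity is a positive integer, $[A]\cdot[A_t]\geq 1$. This contradicts $[A]\cdot[A_t]=[A]^2=0$. The main obstacle is the ``moving'' step in the second paragraph: one must produce a nearby deformation of $A'$ inside ${\rm Hilb}(Z)_A^o$ whose intersection with $A$ is proper, and this is precisely secured by the dimension equality $\dim I=\dim{\rm Hilb}(Z)_A^o$ forced by the two hypotheses ``$N_{A/Z}$ trivial'' and ``$\dim Z=2\dim A$''.
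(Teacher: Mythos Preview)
Your argument is correct, and it reaches the same contradiction ($[A]\cdot[A']=0$ versus a positive contribution from an isolated point), but the route is genuinely different from the paper's. The paper does not move $A'$ at all: it computes $A\cdot A'$ directly as a refined intersection cycle in the sense of Fulton, and invokes \cite[Theorem~12.2]{Fu} to see that every positive-dimensional component of $A\cap A'$ contributes non-negatively (this is where the triviality of $N_{A/Z}$ enters a second time, since the excess normal bundle is then trivial and its top Chern class integrates non-negatively). Together with the strictly positive contribution from the isolated point, this already gives $A\cdot A'>0$.

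Your approach trades Fulton's excess machinery for a moving lemma: you use the \'etale evaluation map $\eta_A^o$ and the dimension count $\dim I=n=\dim{\rm Hilb}(Z)_A^o$ to find nearby $A_t$ with $A\cap A_t$ proper, after which only the elementary positivity of complex-analytic local intersection multiplicities is needed. This is more hands-on and self-contained, but the bookkeeping is heavier; in particular, the step ``generic $t\in T$ gives a proper intersection'' relies on $I$ having finitely many irreducible components (which holds since $I$ is quasi-projective) so that the non-dominant components have images contained in a proper analytic subset of $T$. You handle this correctly. The paper's use of \cite{Fu} is shorter and covers the non-proper case in one stroke; your argument has the virtue of avoiding the reference entirely.
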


\begin{proof}
From $\dim Z = 2 \dim A$,  the intersection number $A \cdot A'$ is
well-defined and equal to $A \cdot A$.  Since a small deformation of
$A$ is disjoint from $A$ by the triviality of the normal bundle,
we have $A \cdot A' =0.$

Suppose that $A \cap A'$ has an  isolated point $z$. Regard $A
\cdot A'$ as an intersection cycle in the sense of \cite{Fu}. The
isolated intersection point $z$ gives a positive contribution to
$A \cdot A'$. The contribution from the other components of $A
\cap A'$ is non-negative by Theorem 12.2 of \cite{Fu} because the
normal bundle of $A$ is trivial. This gives $A \cdot A'
>0$, a contradiction.
\end{proof}

\begin{proposition}\label{p.neq}
Let $A^1, A^2$ be two unobstructed tori with trivial
normal bundles in a projective manifold $Z$. Assume that a connected component $S$ of
$A^1 \cap A^2$ is a subtorus both in $A^1$ and in $A^2$, with
$\dim Z = \dim A^1 + \dim A^2 - \dim S$. Assume furthermore that
$S$ is unobstructed with trivial normal bundle in $Z$ and $\eta_S:
{\rm Univ}(Z)_S \to Z$ is birational.   Then ${\rm Hilb}(Z)_{A^1}
\neq {\rm Hilb}(Z)_{A^2}$.
\end{proposition}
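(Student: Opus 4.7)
The plan is to argue by contradiction: suppose $[A^1]$ and $[A^2]$ lie in a common irreducible component of ${\rm Hilb}(Z)$. Then $\dim A^1=\dim A^2=:a$, so $\dim Z=2a-\dim S$. I will push everything down to the quotient variety $\sM^o$ produced by Proposition~\ref{p.zeta}, apply the local product structure from Proposition~\ref{p.subtorus} around each $A^i$, and then derive a contradiction from Proposition~\ref{p.intersection} applied in a smooth projective compactification of $\sM^o$.

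First, since $\eta_S$ is birational, Proposition~\ref{p.zeta} provides $\zeta: Z^o \to \sM^o$ whose fibers are deformations of $S$. Apply Proposition~\ref{p.subtorus} to each $A^i$ with subtorus $S$: all translates of $S$ inside $A^i$ are deformations of $S$ in ${\rm Hilb}(Z)^o_S$, so $A^i \subset Z^o$. The same proposition furnishes neighborhoods $U^i\supset A^i$ on which $\zeta$ fits in a local factorization $f_i=f'_i\circ\zeta$, making $\zeta(A^i)\subset\sM^o$ an unobstructed torus with trivial normal bundle biregular to $A^i/S$. Because $[A^1],[A^2]$ lie in a common component of ${\rm Hilb}(Z)$ and each $A^i$ contains a deformation of $S$ (namely $S$ itself), the algebraic assignment $[A^t]\mapsto[\zeta(A^t)]$, defined on the open locus of Proposition~\ref{p.zeta}(b), places $[\zeta(A^1)]$ and $[\zeta(A^2)]$ in a common irreducible component of ${\rm Hilb}(\sM^o)$.

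The substantive step, and what I expect to be the main obstacle, is to show that $\zeta(S)$ is an isolated point of $\zeta(A^1)\cap\zeta(A^2)$. Take $q\in\zeta(A^1)\cap\zeta(A^2)$ close to $\zeta(S)$. From $f_i=f'_i\circ\zeta$ one has $\zeta^{-1}(q)\cap U^i\subset A^i$. After shrinking so that $U^1\cap U^2$ is a neighborhood of $S$ in which $A^1\cap A^2=S$ set-theoretically (possible because $S$ is a connected component of $A^1\cap A^2$), this gives $\zeta^{-1}(q)\cap(U^1\cap U^2)\subset S$. For $q$ sufficiently close to $\zeta(S)$, this is a nonempty open subset of the irreducible torus $\zeta^{-1}(q)$, and since $\dim\zeta^{-1}(q)=\dim S$, dimension forces $\zeta^{-1}(q)=S$ and hence $q=\zeta(S)$. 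The delicate point here is dovetailing the two independent local product structures coming from $A^1$ and $A^2$ along the common locus $S$.

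Finally, choose a smooth projective compactification $\tilde{\sM}$ of $\sM^o$. The compact tori $\zeta(A^1),\zeta(A^2)$ sit inside $\sM^o\subset\tilde{\sM}$, retain trivial normal bundles in $\tilde{\sM}$, remain in a common component of ${\rm Hilb}(\tilde{\sM})$, and $\dim\tilde{\sM}=\dim\sM^o=2a-2\dim S=2\dim\zeta(A^1)$. Proposition~\ref{p.intersection} then forces $\zeta(A^1)\cap\zeta(A^2)$ to have no isolated point, contradicting the previous step and completing the proof.
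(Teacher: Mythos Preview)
Your proposal is correct and follows essentially the same approach as the paper: pass to the quotient $\sM^o$ via Proposition~\ref{p.zeta}, observe that $\zeta(S)$ is an isolated point of $\zeta(A^1)\cap\zeta(A^2)$, and obtain a contradiction from Proposition~\ref{p.intersection} in a projective compactification. Your justification of the isolated-point step is more explicit than the paper's, which simply asserts it from $S$ being a connected component of $A^1\cap A^2$.
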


\begin{proof} Applying Proposition \ref{p.zeta} to $S \subset Z$ with $A =A^1$ (resp. $A=A^2$),
 we see that
$\zeta(A^1)$ (resp. $\zeta(A^2)$) is an unobstructed torus with trivial
normal bundle in a projective manifold $\sM$ compactifying
$\sM^o$. Since $S$ is a component of $A^1\cap A^2$, we see that
$\zeta(A^1) \cap \zeta(A^2)$ has an isolated point $\zeta(S)$. If
${\rm Hilb}(Z)_{A^1} = {\rm Hilb}(Z)_{A^2}$, then $\dim A^1 = \dim
A^2$ and $\zeta(A^2)$ is a member of ${\rm
Hilb}(\sM)_{\zeta(A^1)}$. From the assumption $\dim Z = \dim A^1 +
\dim A^2 - \dim S$, we have
$$\dim \sM = \dim Z - \dim S = 2 (\dim A^1 - \dim S) = 2\dim
\zeta(A^1) = 2\dim \zeta(A^2).$$ Applying Proposition
\ref{p.intersection} with $A:= \zeta(A^1)$ and $A' :=
\zeta(A^2)$, we have a contradiction.
\end{proof}

We will skip the proof of the following  elementary lemma.

\begin{lemma}\label{l.subtorus} A closed submanifold of a complex torus with trivial normal
bundle is a subtorus. \end{lemma}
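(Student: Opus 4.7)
My plan is to prove Lemma \ref{l.subtorus} by a Gauss map argument: I will show that the tangent space $T_aA$, viewed inside the trivialized $T_aT$, is independent of $a \in A$, whence $A$ lifts to an affine subspace in the universal cover of $T$, and compactness will then force $A$ to be a translate of a subtorus.

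First, I would write $T = V/\Lambda$ with $V = \C^n$ and use the canonical trivialization $T_T \cong T \times V$ by left-invariant vector fields. Combined with the hypothesis that $N_{A/T}$ is trivial, the normal bundle sequence
\begin{equation*}
0 \longrightarrow T_A \longrightarrow T_T|_A \longrightarrow N_{A/T} \longrightarrow 0
\end{equation*}
exhibits the projection $T_T|_A \to N_{A/T}$ as a bundle map $A \times V \to A \times W$ between two trivial vector bundles on the compact connected manifold $A$. Such a map is described by a holomorphic family of linear maps $\phi_a \colon V \to W$, $a \in A$, and any such family is constant because $A$ is compact and connected. Its kernel $T_aA = \ker \phi_a \subset V$ is thus a fixed $k$-dimensional linear subspace $V' \subset V$, where $k = \dim A$.

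Next, I would lift $A$ to the universal cover $\pi \colon V \to T$. Every connected component of $\pi^{-1}(A)$ is a closed complex submanifold of $V$ whose tangent space at every point equals $V'$, and hence is an affine translate of $V'$. Since $A$ is compact, the image of any such component in $T$ is compact, which forces $V' \cap \Lambda$ to be a full-rank lattice in $V'$; thus $A$ is a translate of the subtorus $V'/(V' \cap \Lambda) \subset T$, as required. The argument poses no real obstacle; the only substantive point is the passage from triviality of the ambient and normal bundles to constancy of the Gauss map, which reduces to the elementary fact that a holomorphic map from a compact connected complex manifold to an affine space is constant.
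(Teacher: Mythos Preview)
Your argument is correct. The paper itself omits the proof entirely, calling the lemma ``elementary,'' so there is nothing to compare against; your Gauss map argument is a standard and clean way to supply it. The crucial step --- that the bundle map $A \times V \to A \times W$ arising from the normal bundle sequence and the two trivializations is governed by a holomorphic map $A \to \mathrm{Hom}(V,W)$, hence constant on the compact connected $A$ --- is exactly right, and the remaining steps (lifting to the universal cover, identifying components of $\pi^{-1}(A)$ with affine translates of the fixed subspace $V'$, and using compactness of $A$ to force $V'\cap\Lambda$ to be a lattice of full rank in $V'$) go through without difficulty. You might note explicitly that each connected component of $\pi^{-1}(A)$ is closed in $V$ (being a component of a closed set), so that it is both open and closed in its affine hull $p+V'$ and therefore equals it; but this is a minor expository point, not a gap.
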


\begin{proposition}\label{p.weboftori}
Let $Z$ be a projective manifold and let $A_1, A_2 \subset
Z$ be two distinct tori with  $A_1 \cap A_2 \neq \emptyset$. Assume that there
exist open neighborhoods $A_1 \subset \sV_1$ and $A_2 \subset
\sV_2$ equipped with  smooth projective morphisms $$\rho_1: \sV_1
\to \Delta^{\dim Z - \dim A^1} \mbox{ and } \rho_2: \sV_2 \to
\Delta^{\dim Z - \dim A^2}$$ such that $ A_1= \rho_1^{-1}(0)$ and $A_2=
\rho_2^{-1}(0).$ Then for a general point $u \in \sV_1 \cap
\sV_2$, the connected component $S_u$ of $$\rho_1^{-1}(\rho_1(u))
\cap \rho_2^{-1}(\rho_2(u))$$ with $u \in S_u$ is a subtorus both in
$\rho_1^{-1}(\rho_1(u))$ and in $\rho_2^{-1}(\rho_2(u)).$
Furthermore, this $S_u$ is unobstructed with trivial normal bundle
in $Z$.
\end{proposition}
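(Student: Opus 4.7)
The plan is to analyze everything through the joint morphism
\[ \psi := (\rho_1,\rho_2)\colon \sV_1 \cap \sV_2 \to \Delta^{\dim Z - \dim A_1} \times \Delta^{\dim Z - \dim A_2} \]
and to apply the constant-rank theorem on its maximal-rank locus. Let $r := \max_{v \in \sV_1 \cap \sV_2} \rk\,d\psi_v$ and let $\sV^\circ \subset \sV_1 \cap \sV_2$ be the Zariski open subset on which $\rk\,d\psi$ attains its maximum. Writing $F_i := \rho_i^{-1}(\rho_i(u))$ for the fiber through $u$, we have $F_1 \cap F_2 = \psi^{-1}(\psi(u))$. On $\sV^\circ$ the constant-rank theorem realizes $\psi$ locally as a submersion onto a smooth $r$-dimensional piece of its image, while $\psi(\sV_1 \cap \sV_2 \setminus \sV^\circ)$ has dimension at most $r-1$; hence for general $u$ the value $\psi(u)$ avoids this latter image and the entire fiber $\psi^{-1}(\psi(u))$ lies inside $\sV^\circ$. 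Since $\rho_2$ is projective, $F_2$ is compact, and so $F_1 \cap F_2$ and its connected component $S_u$ through $u$ are compact. As a connected component of a fiber of the constant-rank morphism $\psi$, $S_u$ is a smooth submanifold of $Z$ with trivial normal bundle $N_{S_u/Z} \cong \sO_{S_u}^r$ (the pullback of $T_{\psi(u)}$ of the image of $\psi$).

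For the subtorus claim, observe that $N_{F_1/Z}$ is trivial because $F_1$ is a fiber of the submersion $\rho_1$. The short exact sequence
\[ 0 \to N_{S_u/F_1} \to N_{S_u/Z} \to N_{F_1/Z}|_{S_u} \to 0 \]
expresses $N_{S_u/F_1}$ as the kernel of a map between trivial bundles; in the canonical trivializations by $T_{\psi(u)}(\text{image of }\psi)$ and $T_{\rho_1(u)}\Delta^{\dim Z - \dim A_1}$, this map is induced by the fixed linear projection of tangent spaces at the (common) basepoints $\psi(u)$ and $\rho_1(u)$, and so is given by a constant matrix. Therefore $N_{S_u/F_1}$ is itself trivial, and by Lemma~\ref{l.subtorus} the closed submanifold $S_u$ of the torus $F_1$ is a subtorus. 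Symmetrically, $S_u$ is a subtorus of $F_2$.

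Finally, since $S_u$ is a connected complex torus, $h^0(S_u, N_{S_u/Z}) = r \cdot h^0(\sO_{S_u}) = r$. On the other hand, $u' \mapsto S_{u'}$ for $u' \in \sV^\circ$ yields an explicit family of deformations of $S_u$ in $Z$, injectively parametrized near $u$ by the $r$-dimensional image $\psi(\sV^\circ)$ (since distinct fibers of $\psi$ are disjoint in $Z$). This produces an $r$-dimensional subscheme of ${\rm Hilb}(Z)$ through $[S_u]$, which combined with the standard tangent-space bound $\dim_{[S_u]} {\rm Hilb}(Z) \le h^0(N_{S_u/Z}) = r$ forces ${\rm Hilb}(Z)$ to be smooth at $[S_u]$, giving unobstructedness.

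The main technical point is the dimension-of-image argument in the first paragraph: for sufficiently general $u$ the rank of $d\psi$ must be maximal at \emph{every} point of $S_u$ and not merely at $u$. This is precisely what lets us identify $S_u$ with a smooth fiber component and read off triviality of both $N_{S_u/Z}$ and $N_{S_u/F_1}$; everything else reduces to constant-rank geometry and the standard Hilbert-scheme dimension bound.
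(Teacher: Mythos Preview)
Your argument is correct and runs parallel to, but is organized differently from, the paper's. The paper fixes $t=\rho_2(u)$, restricts $\rho_1$ to the compact torus $\rho_2^{-1}(t)$, and uses generic smoothness of this \emph{proper} map to see that a general fiber is a closed submanifold of $\rho_2^{-1}(t)$ with trivial normal bundle \emph{there}; Lemma~\ref{l.subtorus} then yields the subtorus claim, and unobstructedness with trivial normal bundle in $Z$ is deduced by invoking Proposition~\ref{p.subtorus}. You instead analyze the joint map $\psi=(\rho_1,\rho_2)$ globally, obtain triviality of $N_{S_u/Z}$ directly from constant rank, pass to $N_{S_u/F_i}$ via the normal-bundle exact sequence and your constant-matrix observation, and establish unobstructedness by the elementary Hilbert-scheme dimension count; thus your route is self-contained and does not need Proposition~\ref{p.subtorus}. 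Two points in your write-up deserve one more line each. First, the dimension bound on $\psi(\sV_1\cap\sV_2\setminus\sV^\circ)$ is cleanest once one notes that $\psi$ itself is \emph{proper} (since $\psi^{-1}(K_1\times K_2)=\rho_1^{-1}(K_1)\cap\rho_2^{-1}(K_2)$ is compact), so Remmert's theorem makes the image analytic and generic smoothness over that image gives what you want. Second, the trivialization $N_{S_u/Z}\cong\sO_{S_u}^{\,r}$ rests on the fact that the subspace $\mathrm{im}(d\psi_v)$ of the fixed tangent space at $\psi(u)$ is the \emph{same} $r$-plane for every $v$ in the connected fiber $S_u$; this follows because the constant-rank factorization makes $\psi$ locally factor through an immersion of an $r$-disc, so the image germ and its tangent are locally constant along the fiber and hence globally constant by connectedness --- not entirely tautological, and exactly what makes your exact-sequence step with the ``fixed linear projection'' go through.
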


\begin{proof}
 For each $t \in \Delta^{\dim Z - \dim A^2}$, $\rho_2^{-1}(t) \cap \sV_1$ is a closed subvariety of $\sV_1$. General fibers of
the proper morphism $\rho_1|_{\rho_2^{-1}(t) \cap \sV_1}$ are  unions of submanifolds with
trivial normal bundles in the torus $\rho_2^{-1}(t)$. Thus they
must be  subtori of $\rho_2^{-1}(t)$  by Lemma~\ref{l.subtorus}.
It follows that for general $u \in \sV_1 \cap \sV_2$, $S_u$ must be a
subtorus of $\rho_2^{-1}(\rho_2(u))$, and by the same reasoning, also a
subtorus in $\rho_1^{-1}(\rho_1(u))$. By the generality of $u$,
deformations of $S_u$ cover an open subset in $Z$. Applying Proposition
\ref{p.subtorus} with $S= S_u$ and $A= \rho_1^{-1}(\rho_1(u))$,   we see that $S$ is
unobstructed with trivial normal bundle in $Z$.
\end{proof}

The next proposition is the main result of this section.

\begin{proposition}\label{p.toruslift}
Let  $\sW$ be a pairwise integrable web on a projective manifold
$M$ whose members are tori. Fix a general point $x\in M^{\rm
good}$ and choose a neighborhood $U \subset M^{\rm good}$ as in
Definition \ref{d.good}.  Since $A^{ij}_x$ is smooth at $x$ by
Proposition \ref{p.better}, we may assume  by shrinking $U$ that
 $$\sigma|_{\sigma^{-1}(U)}: \sigma^{-1}(U) \to U \cap  A^{ij}_x$$
 is biholomorphic.  Using the notation of
Proposition \ref{p.sigma} and shrinking $U$ further if necessary, we have
the following.
\begin{enumerate} \item[(i)] For any $y \in U \cap A^{ij}_x$, the component $S^{ij}_y$ of
$\widetilde{A^i_y} \cap \widetilde{A^j_y}$ through
$\sigma^{-1}(y)$ is unobstructed with trivial normal bundle in
$\widetilde{A^{ij}_x}$ and is  a subtorus both in
$\widetilde{A^i_x}$ and $\widetilde{A^j_x}$. \item[(ii)] The germ of $S^{ij}_y$ at $y$ is sent by
$\sigma$ to that of the
leaf of $W^i\cap W^j$ through $y$. \item[(iii)] Small deformations
of $S^{ij}_y$ in $\widetilde{A^{ij}_x}$ are realized by $S^{ij}_z$
for $z \in U \cap A^{ij}_x$.   \item[(iv)] If $\eta_{S^{ij}_x}:
{\rm Univ}(\widetilde{A^{ij}_x})_{S^{ij}_x} \to
\widetilde{A^{ij}_x}$ is birational, then $A^{ij}_{x_i} \neq
A^{ij}_{x_j}.$
\end{enumerate}
\end{proposition}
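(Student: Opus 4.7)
My plan is to apply Proposition \ref{p.weboftori} inside $Z := \widetilde{A^{ij}_x}$ with $A_1 := \widetilde{A^i_x}$ and $A_2 := \widetilde{A^j_x}$, and then to derive (iv) by combining Proposition \ref{p.neq} with Proposition \ref{p.HilbA^ij}. All the local geometry needed — smooth families of tori in $Z$ containing $\widetilde{A^i_x}$ and $\widetilde{A^j_x}$ as fibers — is already packaged in Proposition \ref{p.sigma}.

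To set up Proposition \ref{p.weboftori}, I would build smooth projective morphisms $\rho_1,\rho_2$ on the neighborhoods $\sV^i_x,\sV^j_x \subset Z$ from Proposition \ref{p.sigma}, with $\widetilde{A^i_x}$ and $\widetilde{A^j_x}$ as zero fibers. These come from composing $\upsilon^i_x,\upsilon^j_x$ with $\rho$, after shrinking so that the images in $\sK^{\rm bihol}$ become polydiscs of dimensions $\rk(W^j)-\rk(W^i\cap W^j)$ and $\rk(W^i)-\rk(W^i\cap W^j)$ respectively; by Proposition \ref{p.sigma} the fibers of $\rho_1$ and $\rho_2$ are exactly the $\widetilde{A^i_z}$ and $\widetilde{A^j_z}$ for $z\in U\cap A^{ij}_x$. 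Moreover $\widetilde{A^i_x}\cap\widetilde{A^j_x}$ contains $\sigma^{-1}(x)$, so is nonempty. Proposition \ref{p.weboftori} then yields, for a general $u\in\sV^i_x\cap\sV^j_x$, that the component through $u$ of the intersection of the two fibers is a subtorus of each and is unobstructed with trivial normal bundle in $Z$. Since these are open conditions along the two families, after further shrinking $U$ the conclusion holds for every $y\in U\cap A^{ij}_x$, and the component in question is precisely $S^{ij}_y$ (transported via the biholomorphism $\sigma|_{\sigma^{-1}(U)}$). This gives (i).

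For (ii), pairwise integrability implies $W^i\cap W^j$ is integrable (the bracket of two sections of both lies in both), so it has a leaf of dimension $\rk(W^i\cap W^j)$ through $y$. The subtorus $S^{ij}_y$ has dimension $\dim\widetilde{A^i_y}+\dim\widetilde{A^j_y}-\dim Z=\rk(W^i\cap W^j)$, and its image under $\sigma$ lies in $L^i_y\cap L^j_y$ by Proposition \ref{p.A^ij}(i), hence in the leaf of $W^i\cap W^j$; a dimension match forces the germs at $y$ to agree. For (iii), by (i) and Proposition \ref{p.unobstruct}, the component of ${\rm Hilb}(Z)$ through $[S^{ij}_y]$ has dimension $\dim Z-\dim S^{ij}_y=\rk(W^{ij})-\rk(W^i\cap W^j)$; since the family $\{S^{ij}_z\}_{z\in U\cap A^{ij}_x}$ is constant as $z$ moves along a leaf of $W^i\cap W^j$, its effective parameter space is the leaf space inside $A^{ij}_x$, of the same dimension, so this family covers a neighborhood of $[S^{ij}_y]$ in its Hilbert component.

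Finally, for (iv), I would invoke Proposition \ref{p.neq} with $Z=\widetilde{A^{ij}_x}$, $A^1=\widetilde{A^i_x}$, $A^2=\widetilde{A^j_x}$, $S=S^{ij}_x$: the dimension identity $\dim Z=\dim A^1+\dim A^2-\dim S$ is $\rk(W^{ij})=\rk(W^i)+\rk(W^j)-\rk(W^i\cap W^j)$, $S^{ij}_x$ is a subtorus of each and is unobstructed with trivial normal bundle by (i), and $\eta_{S^{ij}_x}$ is birational by assumption. Hence $\widetilde{A^i_x}$ and $\widetilde{A^j_x}$ lie in distinct irreducible components of ${\rm Hilb}(Z)$, and Proposition \ref{p.HilbA^ij} delivers $A^{ij}_{x_i}\neq A^{ij}_{x_j}$. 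The main obstacle I expect is the uniform spreading step in (i): Proposition \ref{p.weboftori} guarantees the conclusion only for general $u$, so some care is needed to check that after shrinking $U$ the pointwise-defined component $S^{ij}_y$ coincides with the one furnished by the family for every $y$, and that the subtorus property and triviality of the normal bundle persist uniformly along $U\cap A^{ij}_x$.
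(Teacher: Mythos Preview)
Your proposal is correct and follows essentially the same route as the paper: (i) via Proposition~\ref{p.sigma} plus Proposition~\ref{p.weboftori}, (ii) from Proposition~\ref{p.A^ij}(i) with a dimension match, (iii) by comparing the dimension of the Hilbert component with that of the family $\{S^{ij}_z\}$, and (iv) by Proposition~\ref{p.neq} followed by Proposition~\ref{p.HilbA^ij}. The uniform-spreading concern you raise in (i) is handled exactly as you suggest, by shrinking $U$ around the general point $x$; the paper does not belabor this point either.
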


\begin{proof}
(i) is a direct consequences of Proposition \ref{p.sigma} and
Proposition \ref{p.weboftori}. (ii) is immediate from Proposition
\ref{p.A^ij} (i).

For (iii), note that for $z \in U \cap A^{ij}_x$ with sufficiently small $U$, the  submanifold
 $S^{ij}_z$ is a deformation of $S^{ij}_y$ and the family $\{
S^{ij}_z, \; z \in U \cap A^{ij}_x\}$
covers an open set in $\sigma^{-1}(U).$ Since $S^{ij}_y$ has trivial normal
bundle in $\widetilde{A^{ij}_x}$, this implies that the family $\{
S^{ij}_z, \; z \in U \cap A^{ij}_x\}$  corresponds to  an open neighborhood
of $[S^{ij}_y]$ in ${\rm Hilb}(\widetilde{A^{ij}_x})$, proving (iii).

To prove (iv), set $$Z=\widetilde{A^{ij}_x}, \; A^1:=
\widetilde{A^i_x} \mbox{ and }A^2:= \widetilde{A^j_x}.$$ From (ii)
we have $\dim Z = \dim A^1 + \dim A^2 - \dim S$.  Applying
Proposition \ref{p.neq}, we have ${\rm Hilb}(Z)_{A^1} \neq {\rm
Hilb}(Z)_{A^2}$. Thus $A^{ij}_{x_i} \neq A^{ij}_{x_j}$ by
Proposition \ref{p.HilbA^ij}. \end{proof}

\section{Webs of Lagrangian tori}\label{s.Lagrangian}

\begin{definition}\label{d.symplecticVector}
An even-dimensional vector space $V$ equipped with a
non-degenerate form $\omega \in \wedge^2 V^*$ is a {\em symplectic
vector space}. Given a subspace $W \subset V$, define $$W^{\perp}
:= \{ v \in V, \omega(v, w) = 0 \mbox{ for all } w \in W\}.$$  A
subspace $W \subset V$ is {\em Lagrangian} if $W = W^{\perp}$. If $W$ is Lagrangian, $2 \dim W = \dim V$.
\end{definition}

We will skip the proof of the following elementary lemma.

\begin{lemma}\label{l.Lagrangian}
Given a symplectic vector space $(V, \omega)$ and two Lagrangian
subspaces $W^1, W^2 \subset V$,
$$W^1 \cap W^2 = (W^1 + W^2)^{\perp} \subset W^1 + W^2.$$
\end{lemma}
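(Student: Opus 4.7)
The plan is to prove the identity via the standard duality between sum and intersection under the $\perp$-operation, then invoke the Lagrangian hypothesis.

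First, I would establish the general linear algebra identity
\[
(U_1 + U_2)^{\perp} = U_1^{\perp} \cap U_2^{\perp}
\]
for arbitrary subspaces $U_1, U_2 \subset V$. This follows directly from unwinding the definition: a vector $v$ lies in the left-hand side iff $\omega(v, u_1 + u_2) = 0$ for every $u_1 \in U_1, u_2 \in U_2$, which by bilinearity is equivalent to $\omega(v, u_1) = 0$ for all $u_1 \in U_1$ and $\omega(v, u_2) = 0$ for all $u_2 \in U_2$, i.e., $v \in U_1^{\perp} \cap U_2^{\perp}$.

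Next, I would specialize to $U_1 = W^1$ and $U_2 = W^2$. Using the Lagrangian hypothesis $W^1 = (W^1)^{\perp}$ and $W^2 = (W^2)^{\perp}$, the identity above becomes
\[
(W^1 + W^2)^{\perp} = (W^1)^{\perp} \cap (W^2)^{\perp} = W^1 \cap W^2,
\]
which gives the desired equality. The inclusion $W^1 \cap W^2 \subset W^1 + W^2$ is trivial since $W^1 \cap W^2 \subset W^1 \subset W^1 + W^2$.

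There is no serious obstacle here; the statement is formal once one has the general $\perp$-identity, and the Lagrangian condition $W = W^{\perp}$ does all the work. One should note that the non-degeneracy of $\omega$ is implicitly used only through the definition of Lagrangian subspaces (which in turn forces $2 \dim W = \dim V$), but is not needed to derive the sum-intersection identity itself.
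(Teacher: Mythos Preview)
Your argument is correct: the identity $(U_1+U_2)^\perp = U_1^\perp \cap U_2^\perp$ is immediate from bilinearity, and substituting the Lagrangian condition $W^i=(W^i)^\perp$ yields the equality, while the inclusion $W^1\cap W^2 \subset W^1+W^2$ is trivial. The paper itself omits the proof, stating only that the lemma is elementary, so there is nothing to compare against; your write-up fills the gap cleanly.
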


\begin{definition}\label{d.symplectic}
Let $M$ be a complex manifold. A {\em symplectic form} on $M$ is a
closed holomorphic 2-form $\omega$ on $M$ such that for each $x\in
M$, the tangent space $(T_x(M), \omega_x)$ is a symplectic vector
space. The pair $(M, \omega)$ is called a {\em symplectic
manifold}. A submanifold $A$ of a symplectic manifold $(M,
\omega)$ is {\em Lagrangian} if for each $x \in A$, $T_x(A)$ is a
Lagrangian subspace of $T_x(M)$. A Lagrangian submanifold is a
{\em Lagrangian torus} if it is biholomorphic to a complex torus.
A flat morphism with connected fibers $f: M \to B$ from a
symplectic manifold $M$ onto a complex manifold $B$ is a {\em
Lagrangian fibration} if each general fiber is a Lagrangian
submanifold.
\end{definition}

\medskip
The next lemma is standard: see, e.g., p.220 of \cite{GS}.

\begin{lemma}\label{l.GS}
The cotangent bundle $T^*(B)$ of a complex manifold $B$ has a
canonical holomorphic symplectic form $\omega_{\rm st}$ on it. When $n=\dim B,$ a holomorphic coordinate system $(q^1, \ldots, q^n)$ on a
coordinate neighborhood $U \subset B$ and the associated functions
$p^i$ on $T^*(U)$ given by $\frac{\partial}{\partial q^i}$ define a
holomorphic coordinate system $(p^1, \ldots, p^n, q^1, \ldots, q^n)$ on $T^*(U)$ such that
$$\omega_{\rm st}|_{T^*(U)} = dp^1 \wedge dq^1 + \cdots + dp^n \wedge dq^n.$$
With respect to $\omega_{\rm st}$, the natural projection $\pi:
T^*(B) \to B$ is a Lagrangian fibration and a section $\Sigma
\subset T^*(B)$ of $\pi$ is a Lagrangian submanifold if and only
if it is $d$-closed when regarded as a 1-form on $B$.
\end{lemma}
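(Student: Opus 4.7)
The plan is to construct $\omega_{\rm st}$ from the tautological (Liouville) 1-form on $T^*(B)$, and then read off each of the four assertions from its coordinate expression. First I would define, intrinsically, a holomorphic 1-form $\theta$ on $T^*(B)$ by the formula
\[
\theta_\alpha(v) := \alpha\bigl(d\pi_\alpha(v)\bigr), \qquad \alpha \in T^*(B),\; v \in T_\alpha(T^*(B)),
\]
where $\pi:T^*(B)\to B$ is the projection. In local coordinates $(p^1,\ldots,p^n,q^1,\ldots,q^n)$ as in the statement, a direct computation gives $\theta = \sum_i p^i\,dq^i$. I would then set $\omega_{\rm st} := -d\theta$. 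This is manifestly closed (since $d^2=0$), globally defined (since $\theta$ is), and in local coordinates equals $\sum_i dp^i\wedge dq^i$, which establishes both the canonical nature and the coordinate formula. Non-degeneracy at each point is immediate from this coordinate expression, so $\omega_{\rm st}$ is a holomorphic symplectic form.

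Next I would verify that $\pi$ is a Lagrangian fibration. A fiber $\pi^{-1}(q_0)$ is cut out by $q^1 = q^1_0, \ldots, q^n = q^n_0$, so its tangent space at any point is spanned by $\partial/\partial p^1,\ldots,\partial/\partial p^n$. Plugging any two such vectors into $\omega_{\rm st} = \sum_i dp^i\wedge dq^i$ gives zero, so each fiber is isotropic of dimension $n = \tfrac{1}{2}\dim T^*(B)$, hence Lagrangian.

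Finally, for the statement about sections, let $\Sigma\subset T^*(B)$ be a section of $\pi$ and write it locally as $p^i = f_i(q^1,\ldots,q^n)$, so that as a 1-form on $B$ it is $\sigma = \sum_i f_i(q)\,dq^i$. Parametrizing $\Sigma$ by $(q^1,\ldots,q^n)$ via the section, the pullback of $\omega_{\rm st}$ to $\Sigma$ is
\[
\sum_{i} df_i\wedge dq^i \;=\; \sum_{i,j}\frac{\partial f_i}{\partial q^j}\,dq^j\wedge dq^i \;=\; d\sigma.
\]
Since $\dim\Sigma = n = \tfrac12\dim T^*(B)$, the section $\Sigma$ is Lagrangian if and only if this pullback vanishes, i.e.\ if and only if $d\sigma = 0$. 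The only step that requires any care is making sure the tautological 1-form $\theta$ is genuinely intrinsic (independent of the coordinate chart), which follows from the coordinate-free definition above; everything else is a routine unpacking in Darboux-type coordinates, so I do not expect any real obstacle.
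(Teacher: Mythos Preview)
Your argument is correct and complete; the paper itself does not supply a proof of this lemma at all, simply calling it ``standard'' with a reference to \cite{GS}, p.~220. So there is nothing to compare against beyond noting that what you have written is precisely the classical Liouville 1-form construction that the cited reference records.

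One minor inconsistency: with your intrinsic definition $\theta_\alpha(v)=\alpha(d\pi_\alpha(v))$ one gets $\theta=\sum_i p^i\,dq^i$ in coordinates, and hence $d\theta=\sum_i dp^i\wedge dq^i$. To match the paper's convention $\omega_{\rm st}=\sum_i dp^i\wedge dq^i$ you should therefore set $\omega_{\rm st}:=d\theta$ rather than $-d\theta$. This sign slip is harmless for everything that follows---closedness, non-degeneracy, and the Lagrangian conditions for fibers and sections are all insensitive to an overall sign on $\omega$---but it is worth correcting so that the coordinate formula you state actually agrees with the definition you give.
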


The following is a holomorphic version of the action-angle
variables for completely integrable Hamiltonian systems, Theorem
44.2 in \cite{GS}. It is a reformulation of Proposition 3.5 in
\cite{Hw}, and its proof will be skipped.

\medskip
\begin{proposition}\label{p.GS}  Let $(N, \omega)$ be a symplectic manifold
and $f: (N, \omega) \rightarrow B$ be a proper Lagrangian
fibration such that each fiber is a complex torus and there exists
a Lagrangian section $\Sigma \subset N$. Then there exists an
unramified surjective holomorphic map $\chi: T^*(B) \rightarrow N$
with $\pi= f \circ \chi$ such that \begin{enumerate} \item[(i)] for each $b \in B$,
$\chi_b: T^*_b(B) \to f^{-1}(b)$ is the universal covering of the
complex torus with $\chi_b(0) = \Sigma \cap f^{-1}(b)$, \item[(ii)] in
the notation of Lemma \ref{l.GS},  $\omega_{\rm st} = \chi^*
\omega,$ and, consequently, \item[(iii)] each component of
$\chi^{-1}(\Sigma)$ is a Lagrangian submanifold in $T^*(B)$,
locally defining a closed 1-form on $B$. \end{enumerate}
\end{proposition}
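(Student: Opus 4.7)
The plan is to build $\chi$ from Hamiltonian flows of closed $1$-forms on $B$ and then verify (i)--(iii) in turn. The Lagrangian condition $V = V^{\perp}$ on the vertical subbundle $V \subset T(N)$ of $f$ produces, at every $p \in f^{-1}(b)$, a natural linear isomorphism $\sharp_p \colon T^*_b(B) \to V_p$ determined by
\[
i_{\xi^\sharp_p} \omega_p = (f^*\xi)_p.
\]
The right-hand side annihilates $V_p$ because the fibers are Lagrangian, so $\xi^\sharp_p$ is automatically vertical, and non-degeneracy of $\omega$ together with $\dim T^*_b(B) = \dim V_p$ makes $\sharp_p$ a linear isomorphism.

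Given $\xi \in T^*_b(B)$, extend to any closed holomorphic $1$-form $\beta$ on a neighborhood $W$ of $b$ in $B$ (for instance a constant form in local coordinates). Then $\beta^\sharp$ is the Hamiltonian vector field on $f^{-1}(W)$ associated to the closed $1$-form $f^*\beta$, so its flow preserves $\omega$; being vertical it preserves each fiber; and along $f^{-1}(b)$ it depends only on $\xi = \beta(b)$, since $f^*\beta|_{f^{-1}(b)}$ does. Because fibers are compact, the time-$1$ flow is defined globally. Set
\[
\chi(\xi) := \Phi^{\beta^\sharp}_{1}\bigl(\Sigma \cap f^{-1}(b)\bigr),
\]
which is independent of the choice of extension $\beta$ by the preceding remark. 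Two such Hamiltonian fields $\beta^\sharp$ and $\gamma^\sharp$ commute: since $\gamma^\sharp$ is vertical, $\omega(\beta^\sharp,\gamma^\sharp) = (f^*\beta)(\gamma^\sharp) = 0$, and for symplectic fields associated to closed $1$-forms this forces $[\beta^\sharp,\gamma^\sharp] = 0$. Hence $\chi_b \colon T^*_b(B) \to f^{-1}(b)$ is an additive holomorphic group homomorphism from $\C^n$ onto the complex torus $f^{-1}(b)$. Its derivative at $0$ coincides with the isomorphism $\sharp_{\chi(0_b)}$ of the first paragraph, so $\chi_b$ is a local biholomorphism at $0$; a holomorphic homomorphism from $\C^n$ to a complex torus of the same dimension which is a local biholomorphism at the identity is the universal covering. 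This establishes (i) and shows that $\chi$ is globally unramified and surjective.

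For (ii), the fiberwise additive action of $T^*(B)$ on itself preserves $\omega_{\rm st}$, while on $N$ the corresponding translations are the $\omega$-preserving Hamiltonian flows above; hence both $\chi^*\omega$ and $\omega_{\rm st}$ are invariant under this action, and it suffices to check $\chi^*\omega = \omega_{\rm st}$ along the zero section. At $0_b$ the tangent space of $T^*(B)$ splits canonically as $T_b(B) \oplus T^*_b(B)$, and $d\chi|_{0_b}$ sends $u \in T_b(B)$ to the unique tangent vector of $\Sigma$ projecting to $u$ and sends $\xi \in T^*_b(B)$ to $\xi^\sharp$ at $\chi(0_b)$. Using $\omega|_\Sigma \equiv 0$, $\omega|_{\mathrm{fiber}} \equiv 0$, and $\omega(v,\xi^\sharp) = -\xi(df(v))$, one checks that the induced bilinear form on $T_b(B) \oplus T^*_b(B)$ is exactly the tautological pairing $\bigl((u,\xi),(u',\xi')\bigr) \mapsto \xi(u') - \xi'(u)$ that $\omega_{\rm st}$ represents at $0_b$. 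For (iii), $\chi$ is now a local biholomorphism identifying $\omega_{\rm st}$ with $\omega$, so each component of $\chi^{-1}(\Sigma)$ is Lagrangian in $(T^*(B),\omega_{\rm st})$, and Lemma \ref{l.GS} shows it is locally a closed $1$-form on $B$.

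The only real subtlety is the globality of $\chi$: a priori the construction depends on choosing a closed extension $\beta$, but this is ruled out by the clean fact that $\beta^\sharp|_{f^{-1}(b)}$ depends only on $\beta(b)$. Everything else reduces to a linear-algebra computation at one point of each fiber together with the translation symmetry.
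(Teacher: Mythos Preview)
The paper does not give a proof of this proposition at all: it states that the result is a holomorphic version of Theorem~44.2 in \cite{GS} and a reformulation of Proposition~3.5 in \cite{Hw}, and explicitly says ``its proof will be skipped.'' So there is no argument in the paper to compare yours against.

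What you have written is precisely the standard Liouville--Arnold/action--angle argument that those references carry out, transported to the holomorphic category: build the fiberwise $\C^n$-action from the symplectic duals of pulled-back closed $1$-forms, use compactness of the fibers to get completeness of the flows, use commutativity to get an additive action, and identify the symplectic form along the zero section using that $\Sigma$ and the fibers are both Lagrangian. Your verification of (ii) via translation-invariance and a pointwise check at $0_b$ is exactly the usual one, and (iii) follows from (ii) and Lemma~\ref{l.GS} as you say. Two small points you leave implicit but which are unproblematic: holomorphicity of $\chi$ in the base directions (clear from holomorphic dependence of flows on parameters) and the fact that the fiberwise translations you invoke on $T^*(B)$ must be by \emph{closed} local sections in order to preserve $\omega_{\rm st}$ --- your phrase ``fiberwise additive action of $T^*(B)$ on itself'' is slightly loose, but the intended meaning (translate by a closed local $1$-form taking the value $\xi$ at $b$) is the correct one and suffices. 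In short, your proof is correct and is the argument the cited references give; the paper simply chose not to reproduce it.
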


\begin{proposition}\label{p.integrable}
In the setting of Proposition \ref{p.GS},  let $O \subset N$ be a
connected open subset equipped with a smooth  Lagrangian fibration
$\psi: O \to Q$, different from $f|_U$. For each $x \in U,$
consider the subspace $\sD_x \subset T_x(U)$ defined by
$$\sD_x :=  T_x(f^{-1}(f(x)))+ T_x(\psi^{-1}(\psi(x))).$$
By shrinking $O$ if necessary, we may assume that $\{\sD_x, x \in O\} $ defines
 a vector subbundle $\sD \subset T(O)$. If for each $x \in O$, $f^{-1}(f(x)) \cap
\psi^{-1}(\psi(x))$ is an open subset in a subtorus of
$f^{-1}(f(x))$, then $\sD$ is integrable.
\end{proposition}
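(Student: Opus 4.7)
The plan is to lift via the covering $\chi$ of Proposition \ref{p.GS} and then check Frobenius in adapted local coordinates. Since $\chi$ is a local biholomorphism with $\chi^*\omega = \omega_{\rm st}$, the pullback $\tilde\sD := (d\chi)^{-1}\sD$ on $\tilde O := \chi^{-1}(O)$ is integrable if and only if $\sD$ is; integrability being local, I will fix $x \in O$ and work on a neighborhood of a chosen preimage $\tilde x \in \tilde O$ in $T^*(B)$.

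The second step is to build holomorphic coordinates adapted to the subtorus hypothesis. Let $S_x$ be the component of $f^{-1}(f(x)) \cap \psi^{-1}(\psi(x))$ through $x$. Using Proposition \ref{p.GS}(i) to identify $T_x(f^{-1}(f(x)))$ with $T^*_b(B)$ for $b = f(x)$, the tangent $T_xS_x$ corresponds to a complex subspace $L_b \subset T^*_b(B)$ of some dimension $k$. Because $S_x$ is a translate of a subgroup of the fiber torus and the set of possible subgroup Lie algebras in a torus is countable, $L_b$ depends only on $b$; compactness of $S_x$ forces $L_b \cap \Lambda_b$ to have real rank $2k$ in $L_b$, where $\Lambda_b \subset T^*_b(B)$ denotes the period lattice. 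The lattice $\Lambda$ is locally generated by closed holomorphic 1-forms on $B$ (the differentials of action variables), and from such a generating set I can pick $k$ that give closed holomorphic 1-forms $dI_1, \ldots, dI_k$ spanning $L_b$ over $\C$. Completing to local holomorphic coordinates $(I_1, \ldots, I_n)$ on $B$ and letting $(p^1, \ldots, p^n)$ be the dual fiber coordinates on $T^*(B)$, I arrive at local coordinates in which $\omega_{\rm st} = \sum_j dp^j \wedge dI_j$, the $f$-fibers are $\{I = \text{const}\}$, and the subtorus-tangent distribution $T(S)$ equals $\mathrm{span}(\partial/\partial p^1, \ldots, \partial/\partial p^k)$.

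The verification then reduces to a short Poisson-bracket computation. Since each $\psi$-fiber contains a subtorus through every one of its points, any local $g \in \sI_\psi := \psi^*\sO_Q$ is annihilated by $\partial/\partial p^j$ for $j \leq k$ and so depends only on $(I, p^{k+1}, \ldots, p^n)$. For $h = h(I) \in \sI_f := f^*\sO_B$ and such $g$,
\[
\{h, g\} \;=\; -\sum_{j=1}^n \frac{\partial h}{\partial I_j}\,\frac{\partial g}{\partial p^j} \;=\; -\sum_{j=k+1}^n \frac{\partial h}{\partial I_j}\,\frac{\partial g}{\partial p^j},
\]
which is again independent of $p^1, \ldots, p^k$, hence constant along the subtori. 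By Lemma \ref{l.Lagrangian} one has $\sD^\perp = T(f) \cap T(\psi) = T(S)$, so $d\{h, g\}$ annihilates $\sD^\perp$ and the Hamiltonian vector field $X_{\{h, g\}}$ lies in $\sD$. Because $T(f)$ is locally spanned by $\{X_h : h \in \sI_f\}$, $T(\psi)$ by $\{X_g : g \in \sI_\psi\}$, and $[X_h, X_g] = X_{\{h, g\}} \in \sD$, while $T(f)$ and $T(\psi)$ are individually integrable, Frobenius will give the integrability of $\sD$. The main obstacle I anticipate is Step 2 — producing $k$ closed holomorphic 1-forms locally spanning the sub-bundle $L \subset T^*(B)$ — which will rest on the holomorphic action-angle theory underlying Proposition \ref{p.GS} together with the fact that $L$ is determined over $\C$ by its full-rank real sublattice $L \cap \Lambda$.
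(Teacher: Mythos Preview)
Your proof is correct and follows essentially the paper's route: lift to $T^*(B)$ via $\chi$, use the period lattice (equivalently, the components of $\chi^{-1}(\Sigma)$) to produce closed holomorphic 1-forms spanning the subtorus direction $L\subset T^*(B)$, and then verify integrability in the resulting Darboux coordinates. The only difference is cosmetic: once the paper has these coordinates it simply reads off $\sD=F^{\perp}$ as the span of coordinate vector fields $\partial/\partial p^1,\ldots,\partial/\partial p^n,\partial/\partial q^{k+1},\ldots,\partial/\partial q^n$ and stops, whereas your Poisson-bracket computation of $[X_h,X_g]=X_{\{h,g\}}\in\sD$ is a valid but unnecessary detour to the same conclusion.
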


\begin{proof}
At each point $x \in O$, define $F_x \subset T_x(O)$ by
$$ F_x:= T_x(f^{-1}(f(x))) \cap T_x(\psi^{-1}(\psi(x))).$$
By shrinking $O$ if necessary,  $F \subset T(O)$
defines a foliation on $O$, whose leaves are open subsets of subtori in the fibers of $f$. After
shrinking $B$,  if necessary, we can extend $F$ via translations in the fibers of $f$ to a
foliation on $N$ whose leaves are subtori of the fibers of $f$. Let $\sS \subset N$ be the collection of these subtori intersecting $\Sigma$ such that $f|_{\sS}: \sS \to B$ is a family of subtori with a section $\Sigma \subset \sS$ and leaves of $F$ are just translates of fibers of $f|_{\sS}$ (as in
Proposition \ref{p.subtorus}).

By
Proposition \ref{p.GS} and shrinking $B$ further, we have  $\chi: T^*(B) \to N$ such that
$\chi^{-1}(\Sigma)$ is a union of Lagrangian sections of $T^*(B)
\to B$.  The component of $\chi^{-1}(\sS)$
containing the zero-section of $T^*(B)$ is a  vector subbundle
$\sF \subset T^*(B)$ such that $$\chi|_{\sF_b}: \sF_b \to \sS_b:= f^{-1}(b) \cap \sS$$  is the universal cover of the subtorus $\sS_b$ for each $b \in B$.

 Set $n= \dim B$ and $r= \rk(F)$.
We can find a set  $\{ \Sigma^1, \ldots, \Sigma^n \}$ of components of  $\chi^{-1}(\Sigma)$ forming a frame for the vector bundle $T^*(B)$
such that the subset $\{\Sigma^1, \ldots, \Sigma^r\}$ forms a frame for the subbundle $\sF \subset T^*(B)$.  Using Lemma \ref{l.GS} and  shrinking $B$ further, we have holomorphic functions $q^1, \ldots, q^n$ on $B$ such that
 the  closed 1-forms
$dq^1, \ldots, dq^n$ represent $\Sigma^1, \ldots, \Sigma^n \subset T^*(B)$. Then in terms of the  coordinates $$(p^1=
\frac{\partial}{\partial q^1}, \ldots, p^n
=\frac{\partial}{\partial q^n}, q^1, \ldots, q^n)$$ on $T^*(B)$ of
Lemma \ref{l.GS},  the foliation $\chi^{-1}F$  induced
by $F$ on $T^*(B)$ is given by the translates of the span of $\frac{\partial}{\partial p^1},\;
\ldots, \; \frac{\partial}{\partial p^r}$.  Since the distribution
$\sD$ coincides with $F^{\perp}$ by Lemma \ref{l.Lagrangian}, the
pull-back distribution $\chi^{-1} \sD$ on $T^*(B)$ is given by the translates of the span of
$$\frac{\partial}{\partial p^1}, \; \ldots, \; \frac{\partial}{\partial
p^n}, \; \frac{\partial}{\partial q^{n-r}}, \; \ldots,
\; \frac{\partial}{\partial q^n}.$$ Thus $\chi^{-1} \sD$ is
integrable and so is $\sD$.
\end{proof}

\begin{definition}\label{d.webtori}
Let $M$ be a projective symplectic manifold, i.e., a projective
manifold equipped with a holomorphic symplectic form. A web of
submanifold $\sW$ on $M$ is called a {\em web of Lagrangian tori}
if its members are Lagrangian tori in $M$.
\end{definition}

\begin{proposition}\label{p.integrable2}
A web of Lagrangian tori is pairwise integrable.
\end{proposition}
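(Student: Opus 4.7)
The plan is to reduce pairwise integrability to Proposition \ref{p.integrable}. Fix a good point $x \in M^{\rm good}$ and a pair $(i,j) \in I \times I$; by Lemma \ref{l.regular} it suffices to exhibit a nonempty open subset of the neighborhood $U \subset M^{\rm good}$ of Definition \ref{d.good} on which $W^{ij}$ is integrable. The members $A := A^i_x$ and $A' := A^j_x$ are Lagrangian tori meeting at $x$; since a Lagrangian submanifold has normal bundle isomorphic to its cotangent bundle, both tori have trivial normal bundle, and Definition \ref{d.web}(ii) (equivalently Proposition \ref{p.unobstruct}) yields neighborhoods $N \supset A$ and $N' \supset A'$ in $M$ together with proper smooth holomorphic submersions $f : N \to B$ and $\psi : N' \to B'$ onto polydiscs whose fibers are the members of $\sW$ near $A$ and $A'$. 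Since those members are Lagrangian tori, $f$ and $\psi$ are proper smooth Lagrangian fibrations with $f^{-1}(f(x)) = A$ and $\psi^{-1}(\psi(x)) = A'$.

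To invoke Proposition \ref{p.integrable} for the pair $(f,\psi)$, I first construct a Lagrangian section of $f$. Pick $p \in A$ and apply the Darboux normal form for a Lagrangian fibration: there exist holomorphic coordinates $(p_1,\ldots,p_n,q_1,\ldots,q_n)$ on a neighborhood of $p$ in $N$ in which $\omega = \sum_k dp_k \wedge dq_k$ and $f$ is the projection $(p,q)\mapsto q$. Shrinking $B$ to a polydisc neighborhood of $f(x)$ contained in the $q$-image of this chart, the locus $\{p_1 = \cdots = p_n = 0\}$ extends to a Lagrangian section $\Sigma \subset N$ of $f$.

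Next I apply Proposition \ref{p.weboftori} with $A_1 := A$, $A_2 := A'$, $\rho_1 := f$, $\rho_2 := \psi$, which is valid because $A \cap A' \ni x$: for a general point $u \in N \cap N'$, the component $S_u$ of $f^{-1}(f(u)) \cap \psi^{-1}(\psi(u))$ through $u$ is a subtorus of the Lagrangian torus $f^{-1}(f(u))$. Choosing a connected open subset $O \subset N \cap N' \cap U$ of such a general $u$ on which this subtorus condition holds throughout, Proposition \ref{p.integrable} yields integrability of the distribution $\sD$ on $O$ with $\sD_u = T_u(f^{-1}(f(u))) + T_u(\psi^{-1}(\psi(u)))$. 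By the definitions of $W^i$ and $W^j$ in Definition \ref{d.good}, we have $\sD|_O = W^{ij}|_O$, so $W^{ij}$ is integrable on the nonempty open set $O$; Lemma \ref{l.regular} then extends this to all of $U$.

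The chief technical obstacle is the construction of the Lagrangian section of $f$: a Lagrangian fibration need not admit a section over a general polydisc base, and I must argue that one exists after shrinking $B$ using the local Darboux model. Once this standard fact is in hand, the remainder of the argument is a direct concatenation of Propositions \ref{p.unobstruct}, \ref{p.weboftori}, \ref{p.integrable}, together with Lemma \ref{l.regular}.
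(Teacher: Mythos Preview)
Your proof is correct and follows essentially the same route as the paper: set up two Lagrangian torus fibrations through a good point, verify the subtorus hypothesis of Proposition~\ref{p.integrable} via Proposition~\ref{p.weboftori}, and then invoke Proposition~\ref{p.integrable} together with Lemma~\ref{l.regular}. The only cosmetic differences are that the paper carries this out on $N=\rho^{-1}(\sK^{\rm bihol})\subset\sU$ with the pulled-back form $\mu^*\omega$ (transporting $\rho^i$ to $U^1$ via $(\mu^1)^{-1}\circ\mu^i$) rather than directly on $M$, and that you make the existence of a local Lagrangian section explicit via Darboux whereas the paper leaves this implicit; your direct citation of Proposition~\ref{p.weboftori} is in fact cleaner than the paper's citation of Proposition~\ref{p.toruslift}(i), whose statement formally presupposes pairwise integrability.
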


\begin{proof}
Let $\sW=[\mu: \sU \to M, \rho: \sU \to \sK]$ be a web of
Lagrangian tori on a projective symplectic manifold $(M, \omega)$.
The open set $N:= \rho^{-1}(\sK^{\rm bihol})$ is a symplectic
manifold equipped with the symplectic form $(\mu|_N)^* \omega$ and $
\rho|_N: N \to  \sK^{\rm bihol}$ is a Lagrangian fibration. We
choose $U \subset M^{\rm good}$ and   $U^i \subset N$
as in Definition \ref{d.good}. Let $\rho^i:= \rho|_{U^i}$. By the
natural biholomorphic map $ (\mu^1)^{-1} \circ \mu^i: U^i \to
U^1$, we can regard $\rho^i$ as defined on $U^1$. Then $$f= \rho|_N,
\; \psi:= \rho^i, \; B := \sK^{\rm bihol}, \; \sD := W^{1i} \mbox{
and } O:= U^1$$ satisfy the assumption of Proposition
\ref{p.integrable} by Proposition \ref{p.toruslift} (i). Thus
Proposition~\ref{p.integrable} implies that $W^{1i}$ is
integrable. By the same reasoning, we get the integrability of
$W^{ij}$ for all pairs $(i,j)$. Thus $\sW$ is pairwise integrable.
\end{proof}

\begin{proposition}\label{p.univalent}
In the setting of Proposition \ref{p.integrable2}, use the
notation of Proposition  \ref{p.toruslift}. For a general point $x
\in M^{\rm good}$, we have $Z := \widetilde{A^{ij}_x}$ and $S:=
S^{ij}_x\subset Z$, an unobstructed torus with trivial normal
bundle in $Z$ which is the connected component of
$\widetilde{A^i_x} \cap \widetilde{A^j_x}$ containing
$\sigma^{-1}(x)$.  Then $\eta_S: {\rm Univ}(Z)_S \to Z$ is
birational.
\end{proposition}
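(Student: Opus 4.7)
The plan is to show that every deformation $[S'] \in {\rm Hilb}(Z)_S^o$, restricted to the preimage $\sigma^{-1}(U) \subset Z$, is an integral submanifold of a fixed holomorphic foliation of rank $\dim S$. Once this is done, two deformations containing a common point in $\sigma^{-1}(U)$ must coincide by Frobenius, forcing the generic degree of $\eta_S^o$ to equal one.

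First, I verify that for each $z \in U \cap A^{ij}_x$, the image $\sigma(S^{ij}_z) \subset A^{ij}_x$ is a compact irreducible integral subvariety of the foliation $W^i \cap W^j$ on the smooth locus of $A^{ij}_x$ in $U$. By Proposition~\ref{p.sigma}(3), $\sigma|_{\widetilde{A^i_z}}$ is biregular onto $A^i_z$, so $\sigma(S^{ij}_z)$ is an irreducible subtorus of the Lagrangian torus $A^i_z$ of dimension $\dim S$. At $z$, its tangent space equals $(W^i \cap W^j)_z$ by Proposition~\ref{p.toruslift}(ii). At any other smooth point $q \in U \cap \sigma(S^{ij}_z)$, the inclusion $\sigma(S^{ij}_z) \subset A^i_z \cap A^j_z$ together with the rank match $\dim S = \dim(W^i\cap W^j)_q$ (coming from Lemma~\ref{l.Lagrangian}) forces $T_q\,\sigma(S^{ij}_z) = (W^i\cap W^j)_q$. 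Pulled back via $d\sigma$ (an isomorphism on $\sigma^{-1}(U)$), the foliation $W^i \cap W^j$ becomes a holomorphic foliation $F$ on $\sigma^{-1}(U)$, and each $S^{ij}_z \cap \sigma^{-1}(U)$ is integral to $F$.

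Next, I extend this integrality to every deformation via algebraicity of the Gauss map. The assignment $(S', y) \mapsto T_y S' \in {\rm Grass}(T_y Z)$ defines an algebraic section on ${\rm Univ}(Z)_S^o$, and by Proposition~\ref{p.toruslift}(iii) it agrees with the pullback of $F$ on the Zariski dense subset $\{([S^{ij}_z], y) : z \in U \cap A^{ij}_x,\ y \in S^{ij}_z\}$. Since ${\rm Univ}(Z)_S^o$ is irreducible (universal family over the irreducible component ${\rm Hilb}(Z)_S^o$ with irreducible torus fibers), the equality extends to every $(S', y) \in {\rm Univ}(Z)_S^o$ with $y \in \sigma^{-1}(U)$. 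Hence for any such $[S']$, the intersection $S' \cap \sigma^{-1}(U)$ is a full-rank integral submanifold of $F$. Now for general $y \in Z$ with $\sigma(y) \in U$, let $S_1, \ldots, S_d$ be the distinct deformations of $S$ in $Z$ containing $y$ (so $d$ is the generic degree of $\eta_S$). By Frobenius, each $S_k$ coincides near $y$ with the unique local leaf of $F$ through $y$, so the $S_k$'s agree on a neighborhood of $y$; being irreducible algebraic subvarieties of $Z$ of the same dimension sharing a nonempty open subset, they coincide globally. Thus $d = 1$ and $\eta_S$ is birational.

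The main obstacle is the tangency calculation for $\sigma(S^{ij}_z)$ at smooth points of $A^{ij}_x \cap U$ other than $z$; this is where the Lagrangian property enters essentially, via Lemma~\ref{l.Lagrangian}. Without the dimension match $\dim S = \dim(W^i\cap W^j)_q$, the containment $\sigma(S^{ij}_z) \subset A^i_z \cap A^j_z$ alone would not suffice to identify the tangent space of $\sigma(S^{ij}_z)$ with $(W^i\cap W^j)_q$, and the foliation $F$ could not be reconstructed globally along $\sigma(S^{ij}_z)$.
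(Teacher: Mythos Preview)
Your overall strategy---show that all deformations of $S$ are tangent to a fixed rank-$\dim S$ distribution, then invoke Frobenius---is exactly the paper's. But there is a genuine gap in your extension step, and it is precisely the point where the paper's argument diverges from yours.

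You define $F$ as the pullback of $W^i\cap W^j$ to $\sigma^{-1}(U)$, a \emph{classical} open subset of $Z$. Your extension argument then runs: the Gauss map is algebraic, it agrees with $F\circ\eta_S$ on the Zariski-dense set $V_0=\{([S^{ij}_z],y)\}$, and ${\rm Univ}(Z)_S^o$ is irreducible, so equality propagates to all of $\eta_S^{-1}(\sigma^{-1}(U))$. The problem is that the locus $\{\Phi=F\circ\eta_S\}$ is only \emph{analytically} closed in $\eta_S^{-1}(\sigma^{-1}(U))$, not Zariski closed in ${\rm Univ}(Z)_S^o$, because $F$ is not algebraic. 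And $\eta_S^{-1}(\sigma^{-1}(U))$ need not be connected: if $\eta_S$ has degree $d>1$, then over a small contractible $\sigma^{-1}(U)$ the \'etale map $\eta_S^o$ has $d$ sheets. Your set $V_0$ lies entirely in one sheet (the small deformations $S^{ij}_z$ foliate $\sigma^{-1}(U)$, so only one passes through each point), so the identity principle gives nothing on the other $d-1$ sheets. In effect the argument assumes what it is trying to prove.

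The paper closes this gap by observing that $W^i\cap W^j$ restricted to $A^{ij}_x\cap U$ coincides with an \emph{intrinsically and algebraically defined} distribution: the null space ${\rm Null}^{ij}(\omega)_y=T_y(A^{ij}_x)^\perp\cap T_y(A^{ij}_x)$ of the symplectic form along the smooth locus of $A^{ij}_x$. Lemma~\ref{l.Lagrangian} gives not just the dimension match you cite but the full identification $W^i_y\cap W^j_y=(W^{ij}_y)^\perp\cap W^{ij}_y={\rm Null}^{ij}(\omega)_y$. Pulling back by $\sigma$ yields a subbundle $\sN\subset T(\sZ)$ on a \emph{Zariski} open $\sZ\subset Z$. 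Now $\eta_S^{-1}(\sZ)$ is Zariski open in the irreducible variety ${\rm Univ}(Z)_S^o$, hence irreducible, and the comparison of the two algebraic maps $\Phi$ and $\sN\circ\eta_S$ goes through. This is the essential use of the symplectic form here; your argument invokes the Lagrangian hypothesis only for a rank count and misses this globalization.
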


\begin{proof}
 We claim that there exists a non-empty Zariski open
subset $\sZ \subset Z$ and a vector subbundle $\sN \subset T(\sZ)$
such that small deformations of $S$ in $Z$ intersecting $\sZ$  are tangent to $\sN$ with
$\rk(\sN) = \dim S$. This implies that $\eta_S$ is birational.

To prove the claim, define the null subspace at a smooth point $y
\in A^{ij}_x$ by
$${\rm Null}^{ij}(\omega)_y := \{ v \in T_y(A^{ij}_x),
\omega(v, T_y(A^{ij}_x)) =0\}= T_y(A^{ij}_x)^{\perp} \cap
T_y(A^{ij}_x).$$ On a Zariski open subset $\sA \subset {\rm
Sm}(A^{ij}_x)$, this defines a vector subbundle  ${\rm
Null}^{ij}(\omega)|_{\sA} \subset T(\sA)$. Using the desingularization
$\sigma: Z \to A^{ij}_x$, define $$\sZ := \sigma^{-1}(\sA) \cong
\sA \mbox{ and } \sN := d \sigma^{-1}({\rm Null}^{ij}(\omega)|_{\sA})\;
\subset T(\sZ).$$  When $\{W^i, i \in I\}$ is the regular web
induced by $\sW$ in a neighborhood $U$ of $x$, Lemma~\ref{l.Lagrangian} gives, for a point $y \in \sA \cap U$,
$$W^i_y \cap W^j_y  = (W^{ij}_y)^{\perp} \cap W^{ij}_y = T_y(A^{ij}_x)^{\perp} \cap T_y(A^{ij}_x)=
 {\rm Null}^{ij}(\omega)_y$$ because the germ of $\sA$ is that of a leaf of $W^{ij}$ by Proposition
 \ref{p.A^ij} (ii).  Since
the germs of deformations of $S$ in $Z$ correspond to those of
leaves of $W^i \cap W^j|_{\sA}$ by Proposition \ref{p.toruslift}
(ii) and (iii), we see that deformations of $S$ are tangent to
$\sN$ with $\rk(\sN) = \dim S$, proving the claim.
\end{proof}

Now we are ready to complete the proof of Theorem \ref{t.main}.

\begin{proof}[Proof of Theorem \ref{t.main}]
It is well-known that a Lagrangian torus $A \subset M$ is
unobstructed with trivial normal bundle (e.g. by Theorem 8.7 in
\cite{DM}). Thus we have $\xi_A^o: {\rm Univ}(M)_A^o \to {\rm
Hilb}(M)_A^o$ and $\eta_A^o: {\rm Univ}(M)_A^o \to M$ in
Proposition \ref{p.unobstruct}. By choosing suitable projective
manifolds compactifying ${\rm Univ}(M)_A^o$ and ${\rm
Hilb}(M)_A^o$, we obtain a
 web of Lagrangian tori $\sW=[\mu: \sU \to M, \rho: \sU \to \sK]$ which has $A$ as a member.

Using the notation of Definition \ref{n.group}, suppose that the
group $\sH \subset G$ generated by $H_1, \ldots, H_d$ acts
intransitively on $X$. Then by Proposition \ref{p.equivalence}, we
have a factorization of $\mu: \sU \to M$ via a generically finite morphism $\mu': \sU' \to M$
which is not birational and splits over a general member of $\sW$.
Since $M$ is simply connected, the branch divisor $D \subset M$ of
$\mu'$ is a non-empty hypersurface. By Proposition \ref{p.split},
$D$ is disjoint from a general member of $\sW$ and we are done.

Thus we may assume that $\sH$ acts transitively on $X$. We claim
that $(X, G, H)$ with $H=H_1$ is a special triple in the sense of
Definition \ref{d.triple}. From Proposition \ref{p.conjugate}, $H$
is a normal subgroup of the isotropy subgroup $G_1$ of $x_1 \in X$
and $H_i= H_{x_i} = g_i H g_i^{-1}$ when $x_i = g_i \cdot x_1.$ In
terms of Definition \ref{d.triple}, $\langle X \rangle = \sH$.
Thus our assumption that $\sH$ acts transitively on $X$ is exactly
Definition \ref{d.triple} (1).
 The web
$\sW$ is pairwise integrable by Proposition \ref{p.integrable2}.
For a general point $x$ and any pair $x_i \neq x_j$ of points on
$X= \mu^{-1}(x)$, we have the torus $S$ in
$Z=\widetilde{A^{ij}_x}$ with $\eta_S$ birational by Proposition
\ref{p.univalent}. This implies $A^{ij}_{x_i} \neq A^{ij}_{x_j}$   by Proposition \ref{p.toruslift}. Applying Proposition \ref{p.separate},
we see that $x_i \neq x_j$ do not belong to the same $H_{ij}$-orbit in
$X$. Since $H_i, H_j \subset H_{ij}$ by Proposition \ref{p.H_ij},
 $x_i$ and $x_j$ do not belong to the same $\langle x_i, x_j \rangle$-orbit. This is
the condition (2) of Definition \ref{d.triple}. Thus the triple
$(X, G, H_1)$ is a special triple.

By Theorem \ref{t.triple}, we see that $d=1$, i.e., $\mu$ is
birational. By taking a general ample hypersurface $D' \subset
\sK$ and letting $D = \mu(\rho^{-1}(D'))$, we see that
 $A$ is disjoint from $D$.

\end{proof}


\begin{thebibliography}{KSWZ}
\bibitem[Be]{Be} Beauville, A.: Holomorphic symplectic geometry: a
problem list. In {\em Complex and differential geometry}. Springer
Proceedings in Math. {\bf 8} (2011) 49--63
\bibitem[COP]{COP} Campana F., Oguiso K. and Peternell, T.: Non-algebraic hyperk\"ahler manifolds.
 J. Differential Geom. {\bf 85} (2010) 397--424
\bibitem[CMP]{CMP}  Carlson, J., M\"uller-Stach, S. and Peters, C.: {\em
Period mappings and period domains}. Cambridge studies in advanced
mathematics {\bf 85}, Cambridge University Press, Cambridge, 2003
\bibitem[DM]{DM} Donagi, R. and Markman, E.: Spectral covers,
algebraically completely integrable Hamiltonian systems and moduli
of bundles. In {\em Integrable systems and quantum groups}. Lect.
Notes Math. {\bf 1620} (1996) 1--119
\bibitem[Fu]{Fu} Fulton, W.: {\em Intersection theory}. Springer,
Berlin, Heidelberg, New York, Tokyo, 1984
%0.7 (B) of \bibitem[FL]{FL} Fulton, W. and Lazarsfeld, R.: Connectivity and its applications in algebraic
%geometry. In {\em  Algebraic geometry (Chicago, Ill., 1980)}.
%Lect. Notes  Math. {\bf 862} (1981) 26--92
\bibitem[GLR]{GLR} Greb, D.,  Lehn, C. and Rollenske, S.:
 Lagrangian fibrations on hyperk\"ahler manifolds - On a question of
 Beauville. arXiv:1105.3410
\bibitem[GS]{GS} Guillemin, V., Sternberg, S.: {\em Symplectic
techniques in physics}. Second edition. Cambridge University
Press, Cambridge, 1990
\bibitem[Hu]{Hu} Huybrechts, D.: Compact hyperk\"ahler manifolds. In {\it
Calabi-Yau manifolds and related geometries (Nordfjordeid, 2001)},
161--225, Universitext, Springer, Berlin, 2003
\bibitem[Hw]{Hw} Hwang, J.-M.:  Base manifolds for fibrations of projective irreducible
 symplectic manifolds, Invent. math. {\bf 174} (2008) 625--644
 \bibitem[KS]{KS} Kurzweil, W. and Stellmacher, B.: {\em The theory of finite groups}. Springer, New York, Berlin, Heidelberg, 2004
 \bibitem[Vo]{Vo} Voisin, C.: Sur la stabilit\'e des sous-vari\'et\'es lagrangiennes des vari\'et\'es symplectiques holomorphes. In
{\em Complex projective geometry
 (Trieste, 1989/Bergen, 1989)}, 294--303, London Math. Soc. Lecture Note Ser. {\bf 179},  Cambridge University Press, Cambridge, 1992
\bibitem[Wi]{Wi} Wielandt, H.: Kriterien f\"ur Subnormalit\"at in endlichen
Gruppen. Math. Zeit. {\bf 138} (1974) 199--203


\end{thebibliography}
\end{document}